\documentclass[USenglish, a4paper, 10pt, oneside]{article}

	\usepackage{comment}
	\usepackage{lscape}
	\usepackage{amsmath}
	\usepackage[sort,nocompress]{cite}%increasing order for citations
    \usepackage[left=2cm, top=2.8cm, right=2cm, bottom=2.8cm, bindingoffset=0.5cm]{geometry}
	\usepackage{amsfonts}
	\usepackage{amssymb}
	\usepackage{calc}
	\usepackage{array}
	\usepackage{mathtools} % for ceiling function
	\usepackage{enumitem} %global enumerate format
	\setenumerate[1]{label= (\roman*)} %global enumerate format
	\usepackage{xparse} %optional package, used for commands in Preamble 
	\usepackage{etoolbox} % for \AtBeginEnvironment
	\usepackage{amsthm}      % For theorem styles and numbering
	
	\usepackage{hyperref}    % Enable hyperlinks in the document
	\hypersetup{
		colorlinks=true,
		linkcolor=blue,
		citecolor=blue,
		urlcolor=blue
	}
	\usepackage{todonotes}
	\usepackage[capitalize]{cleveref}
	\usepackage{thmtools}
	\usepackage{comment}
	\usepackage{graphicx}
	\usepackage{float}
	\usepackage{caption}
	\usepackage{pdflscape}
	\usepackage{rotating}
	\usepackage{algorithm,algpseudocode}

	\usepackage{pgfplots}
	\usepackage{tikz}
	\pgfplotsset{compat=newest}

	\theoremstyle{plain}
	\newtheorem{theorem}{Theorem}[section]
	
	\newtheorem{corollary}[theorem]{Corollary}
	
	\newtheorem{lemma}[theorem]{Lemma}
	
	\newtheorem{proposition}[theorem]{Proposition}

	\theoremstyle{definition}
	\newtheorem{definition}[theorem]{Definition}
	\newtheorem{example}[theorem]{Example}
	\newtheorem{remark}[theorem]{Remark}
	
	\crefname{theorem}{Theorem}{Theorems}
	\Crefname{theorem}{Theorem}{Theorems}
	\crefname{lemma}{Lemma}{Lemmas}
	\Crefname{lemma}{Lemma}{Lemmas}
	\crefname{fact}{Fact}{Facts}
	\Crefname{fact}{Fact}{Facts}
	\crefname{corollary}{Corollary}{Corollaries}
	\Crefname{corollary}{Corollary}{Corollaries}
	\crefname{proposition}{Proposition}{Propositions}
	\Crefname{proposition}{Proposition}{Propositions}
	\crefname{claim}{Claim}{Claims}
	\Crefname{claim}{Claim}{Claims}
	\crefname{assumption}{Assumption}{Assumptions}
	\Crefname{assumption}{Assumption}{Assumptions}
	\crefname{definition}{Definition}{Definitions}
	\Crefname{definition}{Definition}{Definitions}
	\crefname{example}{Example}{Examples}
	\Crefname{example}{Example}{Examples}
	\crefname{remark}{Remark}{Remarks}
	\Crefname{remark}{Remark}{Remarks}

	\newlist{enumerateq}{enumerate}{1}
	\setlist[enumerateq]{label=\textup{(\roman*)},
					ref=\thetheorem\textup{(\roman*)}}

	\AtBeginEnvironment{theorem}{\crefalias{enumerateqi}{theorem}}
	\AtBeginEnvironment{lemma}{\crefalias{enumerateqi}{lemma}}
	\AtBeginEnvironment{fact}{\crefalias{enumerateqi}{fact}}
	\AtBeginEnvironment{corollary}{\crefalias{enumerateqi}{corollary}}
	\AtBeginEnvironment{proposition}{\crefalias{enumerateqi}{proposition}}
	\AtBeginEnvironment{claim}{\crefalias{enumerateqi}{claim}}
	\AtBeginEnvironment{assumption}{\crefalias{enumerateqi}{assumption}}
	\AtBeginEnvironment{definition}{\crefalias{enumerateqi}{definition}}
	\AtBeginEnvironment{example}{\crefalias{enumerateqi}{example}}
	\AtBeginEnvironment{remark}{\crefalias{enumerateqi}{remark}}
	\newcommand{\lsubdiff}{\mathop{}\!\partial}			% limiting subdifferential
	\newcommand{\rsubdiff}{\mathop{}\!\widehat\partial}	% Frechet subdifferential
	\NewDocumentCommand{\psubdiff}{O{\lambda}}{\mathop{}\!\partial_p^{#1}} % Level prox subdifferential, need xsparse for optional argument. Use [] to add argument
	\newcommand{\func}[3]{#1:#2\rightarrow#3}
	\newcommand{\ffunc}[3]{#1:#2\rightrightarrows#3}
	\newcommand{\ip}[2]{\langle#1,#2\rangle} % inner product
	\newcommand{\norm}[1]{\left\lVert#1\right\rVert} %norm
	
	\NewDocumentCommand{\prox}{O{\lambda}}{\mathop{}\!\ensuremath{P}_{#1}} %Euclidean prox , need xsparse for optional argument. Use [] to add argument
	\NewDocumentCommand{\env}{O{\lambda}}{\mathop{}\!\ensuremath{e}_{#1}} %Euclidean envlope , need xsparse for optional argument. Use [] to add argument

	\NewDocumentCommand{\hull}{O{\lambda}}{\mathop{}\!\ensuremath{h}_{#1}} %Euclidean envlope , need xsparse for optional argument. Use [] to add argument

	\newcommand{\Id}{\ensuremath{\operatorname{Id}}}
	\newcommand{\sgn}{\ensuremath{\operatorname{sgn}}}
	\newcommand{\proj}{\ensuremath{\operatorname{Proj}}} %projector
	\newcommand{\defeq}{\coloneqq}
	\newcommand{\dist}{\ensuremath{\operatorname{dist}}} %distance function

	\DeclareMathOperator*{\argmin}{arg\,min}
	
	\NewDocumentCommand{\conv}{s}{%										%both for closed cvx hull. Put together intentionally to avoid conflict
	\IfBooleanTF{#1}%
	{\operatorname{\overline{conv}}}%
	{\operatorname{conv}}%
	}

	\NewDocumentCommand{\set}{s m o}{%
	\IfBooleanF{#1}{\left}\{
	#2
	\IfValueT{#3}{\left|#3\right.}
	\IfBooleanF{#1}{\right}\}
	}
	\NewDocumentCommand{\seq}{s m e{_^}}{%
	\IfBooleanF{#1}{\left}(
	#2
	\IfBooleanF{#1}{\right})
	\IfValueTF{#3}{_{#3}}{_{k\in\N}}
	\IfValueT{#4}{^{#4}}
	}

	\newcommand{\N}{\mathbb{N}}   % natrual numbers
	
	\newcommand{\Rn}{\ensuremath{\mathbb R^n}}
	\newcommand{\R}{\ensuremath{\mathbb R}}
	\newcommand{\RE}{\overline{\mathbb{R}}}
	\newcommand{\ball}{\mathbb{B}} % unit open ball
	
	\newcommand{\supp}{\ensuremath{\operatorname{supp}}}
	\newcommand{\spn}{\ensuremath{\operatorname{span}}}
	\newcommand{\card}{\ensuremath{\operatorname{card}}}
	
	\newcommand{\dom}{\ensuremath{\operatorname{dom}}}
	\newcommand{\ran}{\ensuremath{\operatorname{ran}}}
	\newcommand{\gph}{\ensuremath{\operatorname{gph}}}
	\newcommand{\gra}{\ensuremath{\operatorname{gph}}}
	
	\newcommand{\epi}{\ensuremath{\operatorname{epi}}}
\newenvironment{proofnoqed}[1][Proof]{% Define a custom proof environment without QED
	\noindent \textit{#1.} % Bold text for the proof title
}{%
}

	\newenvironment{proofitemize}[1][Proof]{%
	\begin{proofnoqed}[#1]
		\begin{itemize}[left=0pt, labelsep=0em, itemindent=0pt,label={}]
		}{%
		\end{itemize}
	\end{proofnoqed}
	}

	\newenvironment{newitemize}[1][Proof]
	{%
		\begin{itemize}[left=0pt, labelsep=0em, itemindent=0pt,label={}]
		}{%
		\end{itemize}
	}

    \usepackage{doi}
	\newcommand{\infconv}{\mathbin{\square}}
	\NewDocumentCommand{\T}{O{$\lambda$}}{\mathop{}\!\ensuremath{T}_{#1}} %prox-grad operator , need xsparse for optional argument. Use [] to add argument

	\usepackage{float}
	\usepackage{subcaption}
\begin{document}
\title{Variational convexity: new characterizations, calculus rules, and applications}

\author{
Radu Ioan Bo\c t\thanks{Faculty of Mathematics, University of Vienna, Oskar-Morgenstern-Platz 1, 1090 Vienna, Austria.
Emails:
\texttt{radu.bot@univie.ac.at},
\texttt{ziyuanw96@univie.ac.at}
}
\and
Ziyuan Wang\footnotemark[1]
}
\maketitle

\begin{abstract}
Introduced by R.T. Rockafellar in 2019, variational convexity is a generalized notion of convexity under which stationary points of nonconvex optimization problems can still be guaranteed to exhibit local optimality. In this paper, we provide characterizations of variationally convex functions through their proximal hulls and epigraphs, and investigate operations that preserve variational convexity, including nonlinear and linear composition, summation, and proximal averaging. We further apply these results to identify variational convexity in nonlinear programming problems with possibly nonsmooth objectives, continuously differentiable inequality constraints, and affine equality constraints. This leads to new conditions ensuring local minimizers, rather than merely stationary points, for such problems, extending beyond current state-of-the-art results that typically require twice continuously differentiable objectives and constraints.

\medskip
\noindent\textbf{Keywords:} variational convexity, local optimality, calculus rule, nonlinear programming, proximal hull

\medskip
\noindent\textbf{Mathematics Subject Classification: 49J52, 49J53, 90C30, 90C46}

\end{abstract}

\tableofcontents
\section{Introduction}
Variational convexity, introduced by Rockafellar in \cite{rock-vietnam}, states that the subdifferential of a nonconvex function is locally indistinguishable, in a primal-dual sense, from that of a convex surrogate. This concept has far-reaching implications in optimization, most notably by ensuring local optimality even without convexity. For this reason, variational convexity and its strong variant, variational strong convexity, have been investigated to establish local optimality in nonlinear programming \cite{Khanh2023variational,khanh2025second,wang2023strong,mordukhovich2026characterizations}, local saddle points of augmented Lagrangians \cite{rockafellar2023augmented}, and convergence of the augmented Lagrangian method \cite{rockafellar2023convergence,wang2023strong}.

In this paper, we develop new characterizations of variational convexity, establish calculus rules for this notion, and apply these results to derive new local optimality conditions in nonlinear programming.

First, we characterize variational convexity in terms of proximal hulls of the underlying function. Similar to Moreau envelopes, proximal hulls are regularizations of functions that lie between the original function and its Moreau envelopes and are always hypoconvex. Rockafellar characterized in \cite[Theorem 1]{Rockafellar2024} (see also \cite[Theorem 1]{rock-vietnam}) the variational convexity of a proper function \(\func{f}{\Rn}{\RE\defeq[-\infty,+\infty]}\) at \(\bar x\in\dom f\) for \(\bar v\in\lsubdiff f(\bar x)\) via a localized convex subgradient inequality for an \(f\)-attentive-\(\varepsilon\)-truncation of \(\lsubdiff f\).  While \(f\)-attentive restriction is generally unavoidable, it can be removed under subdifferential continuity of \(f\) at \(\bar x\) for \(\bar v\in\lsubdiff f(\bar x)\). We show that variational convexity of \(f\) is equivalent to prox-regularity of \(f\) together with a localized convex subgradient inequality for its proximal hull, without invoking \(f\)-attentive truncations. This result also complements the Moreau envelope characterization by Khanh, Mordukhovich, and Phat \cite[Theorem 3.2]{Khanh2023variational}, where variational convexity of \(f\) at \(\bar x\) for \(\bar v\in\lsubdiff f(\bar x)\)  is characterized by prox-regularity of \(f\) and local convexity of its Moreau envelope around the shifted point \(\bar x+\lambda\bar v\). Proximal hulls transfer variational convexity to a local property around \(\bar x\) itself, without requiring any shift. We also provide a geometric characterization of variational convexity by relating this notion to the epigraph of the underlying function. In analogy with classical convexity, variational convexity of a function can be characterized through the variational convexity of its epigraph.

Characterizations of variational convexity, including the new ones established in this paper, provide valuable insight into the nature of this notion. However, directly identifying functions with this property is often difficult. We address this fundamental challenge by developing calculus rules for variational convexity, namely, operations under which the property is preserved. Such rules make it possible to verify variational convexity by first identifying the property on simpler building blocks of a function and then checking standard constraint qualifications from variational analysis, which are often easier to verify. Elementary calculus rules were developed in \cite{luo2024level}, including a separable sum rule and a sum rule under restrictive assumptions. In this paper, we study calculus rules systematically and show that variational convexity is preserved under nonlinear compositions, summation, and proximal averaging. These results not only provide a convenient way to construct new classes of variationally convex functions, but also lead to new local optimality conditions for nonlinear programming problems.
	
We consider a constrained problem of the form
	\begin{align*}
		\min_{x\in\Rn}& \ f(x),\\
		\text{s.t.~}& \ g_i(x)\leq0,i=1,\ldots,m\\
		& \ g_i(x)=0,i=m+1,\ldots,p
	\end{align*}
where \(\func{f}{\Rn}{\RE}\) is a proper  and lower semicontinuous (lsc) function, and \(\func{g_i}{\Rn}{\R}\) is continuously differentiable for \(i=1,\ldots,m\), and affine for \(i=m+1,\ldots,p\). Variational convexity and its strong variant of \(f+\delta_\Omega\), where \(\Omega\) denotes the feasible set of the above problem, entail local optimality of the above constrained problem, which has attracted considerable interest recently; see, for instance, the works by Khanh, Khoa, Mordukhovich, and Phat \cite[Section 7]{Khanh2023variational}, \cite[Section 9]{khanh2025second}; see also recent work on nonlinear semidefinite programming by Wang, Ding, Yang, and Zhao \cite[Section 2]{wang2023strong}. While these works do not impose affine equality constraints, they require both the objective function and the constraint functions to be twice continuously differentiable. In contrast, in our approach it suffices for the constraint functions to be continuously differentiable, while the objective function may even be nonsmooth, at the cost of restricting equality constraints to be affine. Consequently, our framework yields local optimality conditions beyond the scope of the current literature.

The remainder of this paper is organized as follows.
In \cref{sec:pre}, we review preliminaries from nonsmooth and variational analysis, and variational convexity. \Cref{sec:chracterizations} presents characterizations of variational convexity. \Cref{sec:calculus} is devoted to the study of calculus rules for variational convexity, followed in \cref{sec:nonlinear} by applications to nonlinear programming.
	
\section{Preliminaries}\label{sec:pre}
\subsection{Elements of nonsmooth and variational analysis}

In this subsection we recall some preliminary concepts from nonsmooth and variational analysis. Our notation is standard and follows \cite{rockafellar_variational_1998}.

The function \(\func{f}{\Rn}{\RE}\) is said to be \emph{proper} if $f > -\infty$ and there exists \(x\in\Rn\) such that \(f(x)<+\infty\). Its \emph{domain} is defined by $\dom f := \{x \in \Rn \ | \ f(x) < +\infty\}$. Moreover, \(\func{f}{\Rn}{\RE}\) is called \emph{1-coercive} if \(\lim_{\norm{x}\to+\infty}f(x)/\norm{x}=+\infty\).

A set-valued operator \(\ffunc{T}{\Rn}{\Rn}\) is said to be \emph{monotone} if \(\ip{x-y}{u-v}\geq0\) for every \((x,u),(y,v)\in\gra T\), where $\gra T:=\{(z,w) \in \Rn \times \Rn \ | \ w \in T(z)\}$ denotes the \emph{graph} of $T$.
\begin{definition}
	A set-valued operator \(\ffunc{T}{\Rn}{\Rn}\) is said to be
	\begin{enumerate}
		\item \emph{outer semicontinuous} (osc) at \(\bar x \in \Rn\) if \(\set{\bar v\in\Rn}[\exists x_k \to\bar x, v_k \in T(x_k) \ \mbox{such that} \ v_k \to \bar v]\subseteq T(\bar x)\).
		\item\emph{locally bounded} at \(\bar x \in \Rn\) if there exists \(\varepsilon>0\) such that \(\bigcup_{\norm{x-\bar x}<\varepsilon}T(x)\) is bounded.
		\item \emph{upper semicontinuous} (usc) at \(\bar x \in \dom f\) if for every open set $O \subseteq \R^n$ with \(T(\bar x) \subseteq O\) there exists \(\varepsilon>0\) such that \(\bigcup_{\norm{x-\bar x}<\varepsilon}T(x)\subseteq O\).
	\end{enumerate}
\end{definition}
The \emph{Moreau envelope} of \(\func{f}{\Rn}{\RE}\) with parameter \(\lambda>0\) is the function
\[
	\func{\env f}{\Rn}{\RE}, \quad \env f(x) := \inf_{w\in\Rn}\set{f(w)+\tfrac{1}{2\lambda}\norm{w-x}^2}.
\]
The \emph{proximal operator} of \(\func{f}{\Rn}{\RE}\) with parameter \(\lambda>0\) is the set-valued operator
\[\ffunc{\prox f}{\Rn}{\Rn},\quad {\prox f}(x) := \argmin_{w\in\Rn}\set{f(w)+\tfrac{1}{2\lambda}\norm{w-x}^2}.
\]
The \emph{proximal hull} of \(\func{f}{\Rn}{\RE}\) with parameter \(\lambda>0\) is the function
\[
	\func{\hull f(x)}{\Rn}{\RE}, \quad \hull f(x) :=
	\sup_{w\in\Rn}\set{\env f(w)-\tfrac{1}{2\lambda}\norm{w-x}^2}.
\]
It holds that
\begin{equation}\label{eq:hull vs env}
		(\forall \lambda>0)
	\quad
	\env f\leq \hull f\leq f,
\end{equation}
see \cite[Example 1.44]{rockafellar_variational_1998}. Moreover, the Moreau envelope admits the representation
\[
	\lambda\env f=\tfrac{1}{2}\norm{\cdot}^2-(\lambda f+\tfrac{1}{2}\norm{\cdot}^2)^*,
\]
see \cite[Example 11.26]{rockafellar_variational_1998}. Here, for a function 
\(\func{g}{\Rn}{\RE}\), its \emph{conjugate function}  
\(\func{g^*}{\Rn}{\RE}\) is defined by $g^*(p) :=
    \sup_{x\in\Rn}
    \bigl\{p^\top x - g(x)\bigr\}$. The proximal hull admits the representation  
\[
	\lambda \hull f=(\lambda f+\tfrac{1}{2}\norm{\cdot}^2)^{**}-\tfrac{1}{2}\norm{\cdot}^2,
\]
which means that \(\hull f\) is \(\lambda\)-hypoconvex, that is, \(\hull f+\tfrac{1}{2\lambda}\norm{\cdot}^2\) is convex, see \cite[Example 11.26]{rockafellar_variational_1998}.

If \(f\) is proper, convex and lower semicontinuous, then the Moreau envelope \(\env f\) is finite-valued, the proximal operator \(\prox f\) is single-valued, and the proximal hull satisfies \(\hull f=f\) for every \(\lambda>0\).
In general, the behavior of these objects is governed by the following notion from variational analysis.
\begin{definition}
	A function \(\func{f}{\Rn}{\RE}\) is said to be \emph{prox-bounded} if there exists \(\lambda>0\) such that \(\env f(x)>-\infty\) for some \(x\in\Rn\).
	The supremum of all such \(\lambda >0\) is called the \emph{threshold of prox-boundedness}, denoted by \(\lambda_f\).
\end{definition}
In particular, any function that admits an affine minorant, such as any proper, convex, and lower semicontinuous function, is prox-bounded with the threshold being \(+\infty\).\ If \(f\) is prox-bounded with threshold \(\lambda_f\) and \(0<\lambda<\lambda_f\), then the proximal operator \(\prox f\) is nonempty everywhere, compact-valued, and outer semicontinuous, and the Moreau envelope \(\env f\) is always finite and continuous \cite[Theorem 1.25]{rockafellar_variational_1998}, whereas the proximal hull \(\hull f\) is proper \cite[Example 1.44]{rockafellar_variational_1998}.
\begin{definition}
Let \(\func{f}{\Rn}{\RE}\) be a proper function, and let \(\bar x\in\dom f\).
\begin{enumerate}
	\item The \emph{proximal subdifferential} of \(f\) at \(\bar x\) is defined by
	\[
	\partial_pf(\bar x)
	:=
	\set{\bar v\in\Rn}[
		% \begin{aligned}
			% &
            \exists r,\varepsilon>0
		\text{ such that }
        % \\
			% &
            f(x)\geq f(\bar x)+\ip{\bar v}{x-\bar x}-\tfrac{r}{2}\norm{x-\bar x}^2 \ \forall \norm{x-\bar x}\leq\varepsilon
		% \end{aligned}
	].
	\]
	\item The \emph{Fr\'echet/regular subdifferential} of \(f\) at \(\bar x\) is defined by
	\[
		\rsubdiff f(\bar x)
		:=
		\set{\bar v\in\Rn}[
				\liminf_{x\to\bar x}\tfrac{f(x)-f(\bar x)-\ip{\bar v}{x-\bar x}}{\norm{x-\bar x}}\geq0
		].
	\]
	\item The \emph{limiting/general/Mordukhovich subdifferential} of \(f\) at \(\bar x\) is defined by
	\[
		\lsubdiff f(\bar x)
		:=
		\set{\bar v\in\Rn}[
			\exists x_k\xrightarrow[]{f}\bar x, v_k\in\rsubdiff f(x_k) \text{ such that } v_k\to \bar v
		],
	\]
	where \(x_k\xrightarrow[]{f}\bar x\) means that \(x_k\to\bar x\) with \(f(x_k)\to f(\bar x)\) as $k \to +\infty$.
	\item The \emph{horizon/singular subdifferential} of \(f\) at \(\bar x\) is defined by
	\[
		\partial^\infty f(\bar x)
		:=
		\set{\bar v\in\Rn}[
			\exists \lambda_k\downarrow 0, x_k\xrightarrow{f}\bar x, v_k\in\rsubdiff f(x_k)
			\text{ such that }
			\lambda_kv_k\to\bar v
		].
	\]
\end{enumerate}
\end{definition}
\begin{definition}\label{def:f truncation}
	Let \(\func{f}{\Rn}{\RE}\) be a proper function and \(\varepsilon>0\).
	An \emph{\(f\)-attentive-\(\varepsilon\)-truncation} of the subdifferential mapping \(\ffunc{\lsubdiff f}{\Rn}{\Rn}\) around \((\bar x,\bar v)\in\gra\lsubdiff f\) is the mapping \(\ffunc{T}{\Rn}{\Rn}\) defined by
	\begin{equation*}
			T(x)
	=
		\begin{cases}
			\set{v\in\lsubdiff f(x)}[\norm{v-\bar v}<\varepsilon], &\text{ if }\norm{x-\bar x}<\varepsilon,f(x)<f(\bar x)+\varepsilon,\\
			\emptyset,&\text{ otherwise}.
		\end{cases}
	\end{equation*}
	For simplicity, we shall refer to \(T\) as an \emph{\(f\)-attentive truncation} whenever there is no ambiguity in omitting \(\varepsilon\).
\end{definition}
\begin{definition}
	A proper and lower semicontinuous function \(\func{f}{\Rn}{\RE}\) is said to be \emph{prox-regular} at \(\bar x\in\dom f\) for \(\bar v\in\lsubdiff f(\bar x)\) if there exist \(\varepsilon>0\) and \(r\geq0\) such that for every \((x,v)\in\gra\lsubdiff f\), satisfying \(\norm{x-\bar x}<\varepsilon\), \(f(x)<f(\bar x)+\varepsilon\), and \(\norm{v-\bar v}<\varepsilon\), it holds 
		\begin{equation*}
			(\forall \norm{x'-\bar x}<\varepsilon)\quad
			f(x')\geq f(x)+\ip{v}{x'-x}-\tfrac{r}{2}\norm{x'-x}^2.
		\end{equation*}
\end{definition}
Prox-regularity yields remarkable local regularity properties. In particular, when \(f\) is proper, lower semicontinuous, and prox-bounded, Poliquin and Rockafellar
\cite[Theorem 4.4]{poliquin1996} showed that prox-regularity of \(f\) at \(\bar x\) for
\(\bar v\in\lsubdiff f(\bar x)\) implies that, for every sufficiently small \(\lambda>0\), there exists an open neighborhood $U_\lambda$ of \(\bar x+\lambda \bar v\) on which the proximal mapping \(\prox f\) is single-valued, satisfies
\(\prox f(\bar x+\lambda \bar v)=\{\bar x\}\), and the Moreau envelope \(\env f\) is differentiable with Lipschitz continuous gradient given by
\[
    (\forall u\in U_\lambda)\qquad
    \nabla \env f(u)=\frac{u-\prox f(u)}{\lambda}.
\]
See also \cite[Proposition 13.37]{rockafellar_variational_1998}. This local smoothness property of prox-regular functions will play a central role in our analysis.

\subsection{Variational convexity}\label{sec:vc intro}
Prox-regularity of \(f\) is equivalent to the hypomonotonicity of an \(f\)-attentive truncation \(T\) of \(\lsubdiff f\) \cite[Theorem 13.36]{rockafellar_variational_1998}. This naturally raises the question of which property of \(f\) corresponds to monotonicity of such a truncation.

Rockafellar \cite{rock-vietnam} answered this question through the introduction of \emph{variational convexity}, the central topic of this paper. As a strengthening of prox-regularity, variational convexity yields significantly stronger consequences and has recently attracted considerable attention. We first recall the definition.

\begin{definition}\cite[Definition 2]{rock-vietnam}\label{def:vc}
Let \(\func{f}{\Rn}{\RE}\) be proper and lower semicontinuous. We say that \(f\) is \emph{variationally convex} at \(\bar x\in\dom f\) for \(\bar v\in\lsubdiff f(\bar x)\) if there exist open convex neighborhoods $U$ of $\bar x$ and $V$ of $\bar v$, a convex and lower semicontinuous function \(\func{\hat f}{\Rn}{\RE}\) satisfying \(\hat f\leq f\) on \(U\), and a scalar $\varepsilon>0$ such that
	\begin{equation}\label{def:s-vc}
		\left[
			U_\varepsilon\times V
		\right]
		\cap\gra\lsubdiff f
		=
		\left[
			U\times V
		\right]
		\cap\gra\lsubdiff\hat f,
	\end{equation}
where \(
		U_\varepsilon\defeq\set{x\in U}[f(x)< f(\bar x)+\varepsilon]
	\) 
    is an \(f\)-attentive neighborhood of \(\bar x\). Moreover, $f$ and $\hat f$ coincide at every point appearing in the common graph \eqref{def:s-vc}. If, in addition, \(\hat f\) can be chosen to be strongly convex, then \(f\) is said to be variationally strongly convex at \(\bar x\) for \(\bar v\in\lsubdiff f(\bar x)\)
\end{definition}
\begin{remark}
Some comments are in order:
\begin{enumerateq}
			\item\label{rem: subdiff cts}
			The \(f\)-attentive neighborhood \(U_\varepsilon\) in \eqref{def:s-vc} can be replaced by \(U\) whenever \(f\) is \emph{subdifferentially continuous} at \(\bar x \in \dom f\) for \(\bar v\in\lsubdiff f(\bar x)\); that is,
			\(
			f(x_k)\to f(\bar x)
			\)
			for every sequence \((x_k,v_k)\to(\bar x,\bar v)\) as $k \to +\infty$ with \(v_k\in\lsubdiff f(x_k)\); see \cite[Definition 13.28]{rockafellar_variational_1998}. In particular, every proper, convex and lower semicontinuous function is subdifferentially continuous. Indeed, let  \(\func{f}{\Rn}{\RE}\) be proper, convex and lower semicontinuous, and let \((x_k, v_k) \to (\bar x, \bar v) \) as $k \to +\infty$ with \(v_k\in\lsubdiff f(x_k)\). By the convex subgradient inequality,
			\[
			\limsup_{k\to+\infty}f(x_k)
			\leq
			\limsup_{k\to+\infty} \left(f(\bar x)-\ip{\bar x-x_k}{v_k} \right)
			=
			f(\bar x),
			\]
			yielding, in combination with the lower semicontinuity of $f$, \(f(x_k)\to f(\bar x)\) as $k \to +\infty$. Consequently, every proper, convex and lower semicontinuous function is variationally convex at every point of its domain for every subgradient.
			\item\label{rem:vc and prox-regular}
			Variational convexity of \(f\) at \(\bar x \in \dom f\) for \(\bar v \in \partial f(\bar x)\) implies prox-regularity at \(\bar x\) for \(\bar v\).\
			Indeed, let \((x,v)\) belong to the truncation in \eqref{def:s-vc}. The subgradient inequality for \(\hat f\) yields
			\[
			(\forall x'\in U)\quad
			f(x')\geq \hat f(x')\geq \hat f(x)+\ip{v}{x'-x}=f(x)+\ip{v}{x'-x},
			\]
			therefore $\hat f$ is prox-regular at $\bar x$ for $\bar v$. We refer the reader to \cite{Rockafellar2024} for a systematic study of the relationship between variational convexity and prox-regularity. In particular, Rockafellar introduced the notion of variational $s$-convexity, for $s \in \R$, as a unified framework encompassing both concepts.
		\end{enumerateq}
	\end{remark}

In his seminal paper \cite{rock-vietnam}, Rockafellar motivated variational convexity through the search for a local counterpart of the classical characterization of convexity: a proper and lower semicontinuous function is convex if and only if its subgradient mapping is (maximally) monotone; see, for example, \cite[Theorem 12.17]{rockafellar_variational_1998}. A natural first approach to establishing such a local correspondence is to investigate the relationship between local convexity of a function and monotonicity of its subgradient mapping restricted to neighborhoods. However, monotonicity of \(f\)-attentive truncations of the subgradient mapping is a substantially more general notion. Unlike monotonicity on neighborhoods, this localized form of monotonicity may arise even when the underlying function fails to be locally convex; see \cite[Section~1]{rock-vietnam} for illustrative examples. Rockafellar proved that monotonicity of an \(f\)-attentive truncation is in fact equivalent to variational convexity \cite[Theorem~1]{rock-vietnam}, thereby establishing the desired local analogue of the classical convexity--monotonicity correspondence.

The distinction between variational convexity and local convexity extends beyond the characterization of localized monotonicity. Indeed, for a proper and lower semicontinuous function \(\func{f}{\Rn}{\RE}\), one has
\begin{equation}\label{eq:cvx equivalence1}
	f \text{ is convex }
	\quad\Longleftrightarrow\quad
	(\exists \lambda>0)\;\prox f
	\text{ is firmly nonexpansive},
\end{equation}
see \cite[Theorem~1]{rock2021}. This naturally raises the question of how local firm nonexpansiveness can be characterized. A recent result shows that local firm nonexpansiveness of the proximal mapping is characterized by variational convexity of \(f\), rather than by local convexity; see \cite[Theorem~4.7]{luo2024level}; see also \cite[Theorem 2]{rock2021} for a characterization via localized proximal operators. This provides further evidence that variational convexity is the appropriate local analogue of convexity in variational analysis. For a broader perspective on the role of variational convexity and its connections with other localized notions arising in convex optimization, we refer the reader to \cite[Figure~1]{luo2024level}.

Variational convexity is particularly appealing because it yields local optimality without requiring local convexity of the underlying function. To see this, suppose that \(f\) is variationally convex at \(\bar x \in \dom f\) for \(0\in\lsubdiff f(\bar x)\). Then \(\bar x\) is a local minimizer of \(f\). Indeed, \eqref{def:s-vc} implies that
\[
0\in\lsubdiff \hat f(\bar x),
\]
and hence, by convexity of \(\hat f\),
\begin{equation}\label{eq:local min}
	(\forall x\in U)\qquad
	f(x)\ge \hat f(x)\ge \hat f(\bar x)=f(\bar x).
\end{equation}
Rockafellar \cite{rock-vietnam} referred to this property as \emph{variational sufficiency}. Variational sufficiency, together with its strong variant, has attracted considerable recent attention; see, for example,
\cite{Khanh2023variational,rockafellar2023augmented,wang2023strong,khanh2025second}.
We postpone a more detailed discussion of this line of research to \cref{sec:nonlinear}. Recent developments concerning variational convexity have also focused on the modulus of convexity
\cite{rockafellar2025derivative,adly2026variational}
and the stability of local solution mappings
\cite{benko2024primal}.

Several useful characterizations of variational convexity are summarized below. They will play a key role throughout the remainder of the paper.

\begin{lemma}\label{thm:variational cvx}
% \todo{Make it less a formal lemma}
	Let \(\func{f}{\Rn}{\RE}\) be proper and lower semicontinuous function, let \(\bar x\in\dom f\) and \(\bar v\in\lsubdiff f(\bar x)\).
	Then the following statements are equivalent:
	\begin{enumerateq}
		\item\label{thm:variational cvx:1}
		\(f\) is variationally convex at \(\bar x\) for \(\bar v\).
		\item\label{thm:variational cvx:2}
		\emph{\cite[Theorem 1]{Rockafellar2024}}
		\(\bar v\in\rsubdiff f(\bar x)\) and
		\(\partial f\) is \(f\)-locally monotone around \((\bar x,\bar v)\); that is, there exist open convex neighborhoods $U$ of $\bar x$ and $V$ of $\bar v$, and \(\varepsilon>0\) such that
		\begin{equation}
				\gph\partial f\cap[U_\varepsilon\times V] \text{ is monotone}.
		\end{equation}
		\item\label{thm:variational cvx:3}
		\emph{\cite[Theorem 3.2]{Khanh2023variational}}
		\(f\) is prox-regular at \(\bar x\) for \(\bar v\), and the Moreau envelope \(\env f\) is locally convex around \(\bar x+\lambda\bar v\) for all sufficiently small \(\lambda\).
		\item\label{thm:variational cvx:4}
		\emph{\cite[Theorem 1]{Rockafellar2024}}
		There exist open convex neighborhoods $U$ of $\bar x$ and $V$ of $\bar v$, and \(\varepsilon>0\) such that, for every \((x,v)\in \gra\partial f\cap[U_\varepsilon\times V]\),
		\begin{equation}\label{eq:variational cvx:4}
		(\forall y\in U)\quad
		f(y)\geq f(x)+\ip{v}{y-x},
		\end{equation}
		% whenever \((x,v)\in \gra\partial f\cap[U_\varepsilon\times V]\),
		\end{enumerateq}
		where \(U_\varepsilon=\set{x\in U}[f(x)<f(\bar x)+\varepsilon]\).
\end{lemma}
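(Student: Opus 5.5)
This lemma collects known characterizations, so the plan is to rely on the cited results for the hard directions and supply short arguments for the links that follow directly from \cref{def:vc}. The equivalences \ref{thm:variational cvx:1}\(\Leftrightarrow\)\ref{thm:variational cvx:2} and \ref{thm:variational cvx:1}\(\Leftrightarrow\)\ref{thm:variational cvx:4} are \cite[Theorem 1]{Rockafellar2024}. The equivalence \ref{thm:variational cvx:1}\(\Leftrightarrow\)\ref{thm:variational cvx:3} is \cite[Theorem 3.2]{Khanh2023variational}. I would arrange the argument as the cycle \ref{thm:variational cvx:1}\(\Rightarrow\)\ref{thm:variational cvx:4}\(\Rightarrow\)\ref{thm:variational cvx:2}\(\Rightarrow\)\ref{thm:variational cvx:1}, and treat \ref{thm:variational cvx:3} separately through the Moreau envelope.

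\textbf{Step 1: \ref{thm:variational cvx:1}\(\Rightarrow\)\ref{thm:variational cvx:4}.} This is exactly the computation in \cref{rem:vc and prox-regular}. Take \((x,v)\in\gra\lsubdiff f\cap[U_\varepsilon\times V]\). By \eqref{def:s-vc}, \(v\in\lsubdiff\hat f(x)\) and \(f(x)=\hat f(x)\). Since \(\hat f\) is convex and \(\hat f\le f\) on \(U\), for every \(y\in U\) we get \(f(y)\ge\hat f(y)\ge\hat f(x)+\ip{v}{y-x}=f(x)+\ip{v}{y-x}\).

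\textbf{Step 2: \ref{thm:variational cvx:4}\(\Rightarrow\)\ref{thm:variational cvx:2}.} Apply \eqref{eq:variational cvx:4} to two pairs \((x,v),(x',v')\) in the truncated graph, once in each direction, and add the two inequalities; this gives \(\ip{x-x'}{v-v'}\ge0\), so the truncated graph is monotone. Taking \((x,v)=(\bar x,\bar v)\) in \eqref{eq:variational cvx:4} gives a global-on-\(U\) subgradient inequality at \(\bar x\), hence \(\bar v\in\rsubdiff f(\bar x)\).

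\textbf{Step 3: \ref{thm:variational cvx:2}\(\Rightarrow\)\ref{thm:variational cvx:1}.} This is the main obstacle, and I would take it from Rockafellar. The idea is first to extract prox-regularity from local monotonicity together with \(\bar v\in\rsubdiff f(\bar x)\). Then use the local single-valuedness of \(\prox f\) near \(\bar x+\lambda\bar v\) (Poliquin--Rockafellar) to show that \(\prox f\) is locally firmly nonexpansive. It follows that \(\env f\) is locally convex there. Finally, build \(\hat f\) as the convex function whose envelope agrees locally with \(\env f\), namely via a conjugacy or proximal-hull type construction. The delicate part is checking that the graphs coincide on the \(f\)-attentive neighborhood, i.e.\ that \(f=\hat f\) on the common graph. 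For this I would defer to \cite[Theorem 1]{Rockafellar2024}.

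\textbf{Step 4: \ref{thm:variational cvx:3}.} For \ref{thm:variational cvx:1}\(\Rightarrow\)\ref{thm:variational cvx:3}: prox-regularity holds by \cref{rem:vc and prox-regular}. Moreover, \(\nabla\env f=\lambda^{-1}(\Id-\prox f)\) near \(\bar x+\lambda\bar v\), and \(\prox f\) on that neighborhood maps into the localized graph where \(\lsubdiff f\) agrees with \(\lsubdiff\hat f\). So \(\nabla\env f\) locally coincides with the gradient of the convex function \(\env\hat f\), which gives local convexity. For the converse, monotonicity of \(\nabla\env f\) translates through the map \(u\mapsto(\prox f(u),\lambda^{-1}(u-\prox f(u)))\), which locally parametrizes the \(f\)-attentive truncated graph under prox-regularity. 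Under this parametrization it becomes monotonicity of the truncated graph, which is \ref{thm:variational cvx:2}. The details are in \cite[Theorem 3.2]{Khanh2023variational}.
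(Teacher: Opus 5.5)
Your overall route is the paper's, and your self-contained links are correct: the paper gives no argument of its own here and simply cites \cite[Theorem 1]{Rockafellar2024} for (ii) and (iv) and \cite[Theorem 3.2]{Khanh2023variational} for (iii). Step 1 is the computation of \cref{rem:vc and prox-regular}. Step 2 is correct. The forward half of Step 4 works because $\prox f=\prox\hat f$ near $\bar x+\lambda\bar v$, which makes $\env f-\env\hat f$ locally constant. Note that there, and in the converse, you tacitly need $f$ prox-bounded. Statement (iii) presupposes this, and the paper assumes it wherever it invokes (iii), e.g.\ in \cref{thm:vc} and \cref{thm: pa vc}.

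The one argument of your own that fails is the mechanism for (iii)$\Rightarrow$(ii). With $x_i=\prox f(u_i)$ and $v_i=\nabla\env f(u_i)=\lambda^{-1}(u_i-x_i)$,
\[
\ip{\nabla\env f(u_1)-\nabla\env f(u_2)}{u_1-u_2}=\ip{v_1-v_2}{x_1-x_2}+\lambda\norm{v_1-v_2}^2 .
\]
So monotonicity of $\nabla\env f$ yields only $\ip{v_1-v_2}{x_1-x_2}\ge-\lambda\norm{v_1-v_2}^2$. The shear $(x,v)\mapsto(x+\lambda v,v)$ maps monotone sets to monotone sets but not back, so your computation proves (ii)$\Rightarrow$(iii), not the converse. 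What is actually equivalent to monotonicity of the truncated graph is $\lambda$-cocoercivity, $\ip{\nabla\env f(u_1)-\nabla\env f(u_2)}{u_1-u_2}\ge\lambda\norm{\nabla\env f(u_1)-\nabla\env f(u_2)}^2$, i.e.\ local firm nonexpansiveness of $\prox f$. To obtain it, combine local convexity of $\env f$ with the global convexity of $\tfrac{1}{2\lambda}\norm{\cdot}^2-\env f=\lambda^{-1}(\lambda f+\tfrac12\norm{\cdot}^2)^*$, which gives a descent inequality with constant $\lambda^{-1}$. Then run a localized Baillon--Haddad argument, shrinking the ball around $\bar x+\lambda\bar v$ so that the auxiliary points $u_2-\lambda(\nabla\env f(u_2)-\nabla\env f(u_1))$ stay where $\env f$ is convex. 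The gradient structure is essential: monotonicity of both $P$ and $\Id-P$ does not by itself give firm nonexpansiveness (take $P=\tfrac12\Id+R$ with $R$ the rotation by $\pi/2$ in $\R^2$). With this repair, or by just citing \cite[Theorem 3.2]{Khanh2023variational} as the paper does, your cycle closes.
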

\begin{remark}[known characterizations]
Variational convexity admits several other characterizations, including those in terms of proximal operators \cite[Theorem 4.7]{luo2024level}, second-order subdifferentials \cite[Theorem 5.4]{Khanh2023variational, khanh2025second}, quadratic bundles \cite[Theorem 4.4]{khanh2025bundle}, and graphical derivatives \cite[Theorem 4.6]{garrido2025characterization}. See also \cite{khanh-phat} for infinite-dimensional results. Here we collect only those characterizations that will be used in the derivation of our main results.
\end{remark}
Next we provide a useful example of a variationally convex function. We refer the reader to \cite[Section 1]{rock-vietnam} and \cite[Section 2]{Khanh2023variational} for other elementary examples. Further examples will be presented throughout the remainder of this paper as we establish our main results.

For each \(x\in\Rn\), define the index set of nonzero components by
	\[
		\mathcal I(x)=\set{i\in\set{1,\ldots,n}}[x_i\neq0].
	\]
The zero norm of \(x\in\Rn\) is defined by
\[
	\norm{x}_0=\card(\mathcal I(x)),
\]
where \(\card(I)\) denotes the cardinality of a set \(I\).
It is well known that	
\begin{equation}\label{eq:zero norm subdiff}
		(\forall x\in\Rn)\quad
		\rsubdiff\norm{\cdot}_0(x)=\lsubdiff\norm{\cdot}_0(x)=\set{v\in\Rn}[(\forall i\in \mathcal I(x))~v_i=0],
	\end{equation}
see, for example, \cite[Theorem 1 and Remark 2]{le2013generalized}.

Variational convexity of \(\norm{\cdot}_0\) at \(\bar x=0\) was established in \cite[Equation (7)]{rock-vietnam} for the one-dimensional case and in \cite[Example 2.5]{Khanh2023variational} for the \(n\)-dimensional case. To the best of our knowledge, a proof for an arbitrary nonzero point \(\bar x\in\Rn\) has not appeared in the literature. We therefore include one below for completeness.
\begin{example}\label{eg:zero norm vc}
	The zero norm \(\norm{\cdot}_0\) is variationally convex at every \(\bar x\in\Rn\) for every \(\bar v\in\lsubdiff \norm{\cdot}_0 (\bar x)\).
\end{example}
\begin{proof}
Let \(\bar x\in\Rn\) and \(\bar v\in\lsubdiff \norm{\cdot}_0(\bar x)\). Choose \(0<\varepsilon<1\).
	Then
	\[
	(\forall x\in U_\varepsilon)\quad
		\norm{x}_0\leq\norm{\bar x}_0,
	\]
	where \(U_\varepsilon:=\set{x\in \bar x+\varepsilon\ball}[\norm{x}_0 < \norm{\bar x}_0+\varepsilon]\). Shrinking \(\varepsilon>0\) if necessary, we may assume without loss of generality that
	\(
	(\forall x\in \bar x+\varepsilon\ball)~
	\mathcal I(\bar x)\subseteq \mathcal I(x)
	\). Therefore
	\[
		(\forall x\in U_\varepsilon)\quad
		\norm{\bar x}_0\leq\norm{x}_0\leq\norm{\bar x}_0,
	\]
	entailing that
	\begin{equation}\label{eq:zero norm property}
		(\forall x\in U_\varepsilon)\quad
		\mathcal I(x)=\mathcal I(\bar x)
		\text{ and }
		\lsubdiff\norm{\cdot}_0(x)=\set{v\in\Rn}[(\forall i\in \mathcal I(\bar x))~v_i=0],
	\end{equation}
	where the subdifferential equality follows from \eqref{eq:zero norm subdiff}.
	Now take \((x,v), (y,u)\in\gra\lsubdiff f\cap[U_\varepsilon\times \Rn]\).
	Then
	\begin{equation*}
		\ip{x-y}{v-u}=\sum_{i=1}^n\ip{x_i-y_i}{v_i-u_i}=\sum_{i\in \mathcal I(\bar x)}\ip{x_i-y_i}{v_i-u_i}=0,
	\end{equation*}
	where the second and the last equalities hold because of \eqref{eq:zero norm property}.
	Since
\(\lsubdiff \norm{\cdot}_0(\bar x)=\rsubdiff\norm{\cdot}_0(\bar x)\) by \eqref{eq:zero norm subdiff},
we have
\(\bar v\in\rsubdiff\norm{\cdot}_0(\bar x)\).
Consequently, all the conditions of \cref{thm:variational cvx:2} hold, establishing the desired variational convexity.
\end{proof}

\section{New characterizations of variational convexity}\label{sec:chracterizations}
\subsection{A characterization via the proximal hull}

Rockafellar's characterization of variational convexity \cite[Theorem 1]{Rockafellar2024} states that \(f\) is variationally convex at \(\bar x \in \dom f\) for \(\bar v\in\lsubdiff f(\bar x)\) if and only if there exist open convex neighborhoods $U$ of $\bar x$ and $V$ of $\bar v$, and a constant \(\varepsilon>0\) such that, for every \((x,v)\in \gra\partial f\cap[U_\varepsilon\times V]\),
		\begin{equation*}
		(\forall y\in U)\quad
		f(y)\geq f(x)+\ip{v}{y-x},
		\end{equation*}
where \(U_\varepsilon=\set{x\in U}[f(x)<f(\bar x)+\varepsilon]\). The appearance of the \(f\)-attentive neighborhood \(U_\varepsilon\) obscures the precise region on which this inequality is valid. When \(f\) is subdifferentially continuous at \(\bar x\) for \(\bar v\), the \(f\)-attentive restriction can be dropped; see \cref{rem: subdiff cts}.
The question therefore arises whether such restrictions can be eliminated in general.

In this section, we show that variational convexity of \(f\) is equivalent to the combination of prox-regularity of \(f\) and a localized convex subgradient inequality for \(\hull f\), without any appeal to attentive neighborhoods.
The following lemma will be instrumental in establishing this equivalence.
\begin{lemma}\label{lem:prox hull subdiff}
	Let \(\func{f}{\Rn}{\RE}\) be a proper, lower semicontinuous, and prox-bounded function with threshold \(\lambda_f>0\).
	Then
	\[
	(\forall 0<\lambda<\lambda_f)\quad
		v\in\lsubdiff\hull f(x)
	\Leftrightarrow
		x\in\conv\left(\prox f(x+\lambda v)\right).
	\]
\end{lemma}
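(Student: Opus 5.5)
The plan is to reduce everything to convex analysis through the representation $\lambda\hull f=(\lambda f+\tfrac12\norm{\cdot}^2)^{**}-\tfrac12\norm{\cdot}^2$. Fix $0<\lambda<\lambda_f$ and set $g\defeq\lambda f+\tfrac12\norm{\cdot}^2$. Since $\lambda<\lambda_f$, the function $g$ is proper and lsc, and it is $1$-coercive: prox-boundedness with threshold $\lambda_f$ gives $f\geq -\tfrac{1}{2\mu}\norm{\cdot}^2-c$ for some $\lambda<\mu<\lambda_f$ and some constant $c$, so $g\geq\tfrac12(1-\lambda/\mu)\norm{\cdot}^2-\lambda c$.

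\textbf{Step 1: the subdifferential of the hull.} $\hull f$ differs from the convex function $g^{**}/\lambda$ by the smooth function $-\tfrac{1}{2\lambda}\norm{\cdot}^2$. By the smooth sum rule for limiting subdifferentials, and since limiting and convex subdifferentials coincide for convex functions,
\[
v\in\lsubdiff\hull f(x)\iff x+\lambda v\in\partial g^{**}(x).
\]
Write $u\defeq x+\lambda v$. For a convex lsc function, $u\in\partial g^{**}(x)$ is equivalent to $x\in\partial g^{***}(u)=\partial g^*(u)$.

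\textbf{Step 2: the subdifferential of $g^*$.} Here I use $1$-coercivity. It makes $g^*$ finite everywhere, and
\[
g^*(u)=\sup_w\set{\ip{u}{w}-g(w)}
\]
has its supremum attained exactly on $\argmin_w\set{\lambda f(w)+\tfrac12\norm{w-u}^2}=\prox f(u)$; this set is nonempty and compact since $\lambda<\lambda_f$. The key fact I invoke is the formula for the subdifferential of the conjugate of a $1$-coercive lsc function:
\[
\partial g^*(u)=\conv\bigl(\argmax_w\set{\ip{u}{w}-g(w)}\bigr)=\conv\left(\prox f(u)\right),
\]
cf.\ \cite[Example 11.26]{rockafellar_variational_1998} or the general result on subgradients of conjugates of coercive functions \cite[Theorem 11.1 / Exercise 8.31]{rockafellar_variational_1998}. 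No closure is needed because $\prox f(u)$ is compact, so its convex hull is already closed. Combining Steps 1 and 2 gives $v\in\lsubdiff\hull f(x)\iff x\in\conv(\prox f(x+\lambda v))$.

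\textbf{Main obstacle.} The real work is justifying the formula in Step 2 if I cannot simply cite it. To prove it directly, I would view $g^*$ as a pointwise supremum of the affine functions $u\mapsto\ip{u}{w}-g(w)$. Coercivity localizes the relevant $w$ to a compact set uniformly for $u$ near a given point, and the attained set is upper semicontinuous in $u$. A Danskin/Valadier-type formula then gives the subdifferential of such a max as the closed convex hull of the active gradients, which here are the active $w$'s.

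The inclusion $\supseteq$ is elementary. For $w\in\prox f(u)$ we have $g^*(u')\geq\ip{u'}{w}-g(w)=g^*(u)+\ip{w}{u'-u}$ for all $u'$, so $w\in\partial g^*(u)$, and $\partial g^*(u)$ is convex. The inclusion $\subseteq$ is the delicate part. There I would use a separation argument: compute the directional derivative $(g^*)'(u;d)=\max_{w\in\prox f(u)}\ip{w}{d}$ via the attainment and compactness, which identifies $\partial g^*(u)$ as the set whose support function is that of $\conv(\prox f(u))$.
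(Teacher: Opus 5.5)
Your proof is correct. It is essentially the paper's argument written in conjugate language, except that you prove the key identity yourself instead of citing it.

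The paper uses the \(\lambda\)-hypoconvexity of \(\hull f\) to write \(\prox\hull f=(\Id+\lambda\lsubdiff\hull f)^{-1}\). This gives \(v\in\lsubdiff\hull f(x)\) if and only if \(x\in\prox\hull f(x+\lambda v)\). The paper then imports \(\prox\hull f(u)=\conv(\prox f(u))\) from Chen, Wang, and Planiden.

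Your Step~1 (smooth sum rule plus Fenchel inversion for \(g^{**}\), with \(g=\lambda f+\tfrac12\norm{\cdot}^2\)) is the same resolvent step. Indeed,
\[
\prox\hull f(u)=\argmin_w\set{g^{**}(w)-\ip{u}{w}}=\partial g^*(u),
\]
so your intermediate set \(\partial g^*(x+\lambda v)\) is exactly the paper's \(\prox\hull f(x+\lambda v)\).

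The real difference is Step~2. Instead of citing the Chen--Wang--Planiden lemma, you derive \(\partial g^*(u)=\conv(\prox f(u))\) via the Danskin-type formula \((g^*)'(u;d)=\max_{w\in\prox f(u)}\ip{w}{d}\). Your sketch of this is sound. The quadratic lower bound on \(g\) makes the maximizers at \(u+td\) bounded. Cluster points of those maximizers lie in \(\prox f(u)\), by lower semicontinuity of \(g\) and continuity of the finite convex function \(g^*\). Compactness of \(\prox f(u)\) makes the convex hull closed, so it is identified through its support function. The inclusion \(\supseteq\) is the elementary subgradient inequality you wrote down. Because this argument is self-contained, you do not need to pin down an exact textbook reference for the conjugate-subdifferential formula.

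The paper's proof is shorter because it outsources precisely this step. Yours is self-contained and shows exactly where \(\lambda<\lambda_f\) enters: finiteness of \(g^*\), compactness of \(\prox f(u)\), and no closure being needed in the convex hull.
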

\begin{proof}
We write \(j\defeq\norm{\cdot}^2/2\) for simplicity. Since \(0<\lambda<\lambda_f\), \(\hull f=(f+\lambda^{-1}j)^{**}-\lambda^{-1}j\) is proper and \(1/\lambda\)-hypoconvex; see for instance \cite[Example 11.26]{rockafellar_variational_1998}.
	Then
	\begin{align*}
		u\in\prox\hull f(x)
	\Leftrightarrow
		0\in\lsubdiff\hull f(u)+\tfrac{1}{\lambda}(u-x),
	\end{align*}
	where the equivalence holds because the function \(w\mapsto\hull f(w)+\tfrac{1}{2\lambda}\norm{w-x}^2\) is convex.
	Hence
	\(
	\prox\hull f=\left(\Id+\lambda\lsubdiff\hull f\right)^{-1}
	\)
	and, consequently,
	\begin{align*}
		v\in\lsubdiff\hull f(x)
	\Leftrightarrow
		x\in(\Id+\lambda\lsubdiff\hull f)^{-1}(x+\lambda v)=\prox\hull f(x+\lambda v).
	\end{align*}
	A result by Chen, Wang, and Planiden \cite[Lemma 2.9]{chen2020proximal} states that \(\prox\hull f(x+\lambda v)=\conv\left(\prox f(x+\lambda v)\right)\), from which the claimed relation readily follows.
\end{proof}
Now we provide the main result of this section.
\begin{theorem}\label{thm:vc}
	Let \(\func{f}{\Rn}{\RE}\) be proper, lower semicontinuous and prox-bounded, let \(\bar x\in\dom f\) and \(\bar v\in\lsubdiff f(\bar x)\). Then the following statements are equivalent:
	\begin{enumerateq}
		\item\label{thm:vc:1} \(f\) is variationally convex at \(\bar x\) for \(\bar v\in\lsubdiff f(\bar x)\).
		% \item\label{thm:vc:2} \(\bar v\in\lsubdiff_p f(\bar x)\) and for \(\lambda>0\) sufficiently small the resolvent \((\Id+\lambda\lsubdiff f)^{-1}\) is firmly nonexpansive around \(\bar x+\lambda\bar v\) in the following sense:
		% there exists \(\varepsilon>0\) such that for \(\norm{u_i-(\bar x+\lambda\bar v)}<\varepsilon\) and \(x_i\in(\Id+\lambda\lsubdiff f)^{-1}(u_i)\) satisfying
		% \begin{equation}\label{eq:resolvent fnp:condition}
		% \norm{x_i-\bar x}<\varepsilon,
		% f(x_i)<f(\bar x)+\varepsilon,
		% \end{equation}
		% it holds that
		% \begin{equation}\label{eq:resolvent fnp}
		% 	\ip{x_1-x_2}{u_1-u_2}\geq\norm{x_1-x_2}^2.
		% \end{equation}
		\item\label{thm:vc:5}
		\(f\) is prox-regular at \(\bar x\) for \(\bar v\) and for sufficiently small \(\lambda>0\) there exist open convex neighborhoods $U$ of $\bar x$ and $V$ of $\bar v$ such that, for every \((x,v)\in\gra\lsubdiff\hull f\cap[U\times V]\),
		\begin{equation}\label{eq:hull vc ineq}
			(\forall z\in U)\quad
			\hull f(z)\geq \hull f(x)+\ip{v}{z-x}.
		\end{equation}
	\end{enumerateq}
\end{theorem}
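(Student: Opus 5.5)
The plan is to pass between $\lsubdiff f$ and $\lsubdiff\hull f$ near $(\bar x,\bar v)$ using \cref{lem:prox hull subdiff} and the local single-valuedness of $\prox f$ supplied by prox-regularity (Poliquin--Rockafellar \cite[Theorem 4.4]{poliquin1996}). Both implications then reduce to the monotonicity characterization \cref{thm:variational cvx:2} and the subgradient inequality \cref{thm:variational cvx:4}. Two elementary facts are used throughout. First, if $x\in\prox f(u)$, then $f(x)=\hull f(x)$, because $\hull f(x)\ge \env f(u)-\tfrac1{2\lambda}\norm{u-x}^2=f(x)\ge\hull f(x)$ by \eqref{eq:hull vs env}. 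Second, $\hull f$ is $\lambda$-hypoconvex, hence prox-regular and subdifferentially continuous wherever it is finite, and $\lsubdiff\hull f=\rsubdiff\hull f$.

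\textbf{(i)$\Rightarrow$(ii).} Prox-regularity follows from \cref{rem:vc and prox-regular}. Fix $\lambda>0$ small enough that $\prox f$ is single-valued on a neighborhood $W$ of $\bar x+\lambda\bar v$ with $\prox f(\bar x+\lambda\bar v)=\bar x$.
\begin{itemize}
\item Take $(x,v)\in\gra\lsubdiff\hull f$ with $x$ near $\bar x$ and $v$ near $\bar v$. Then $u:=x+\lambda v\in W$, and \cref{lem:prox hull subdiff} gives $x\in\conv\prox f(u)=\{\prox f(u)\}$, so $x=\prox f(u)$.
\item From this, $v\in\partial_p f(x)\subseteq\lsubdiff f(x)$. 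Moreover $f(x)=\env f(u)-\tfrac{\lambda}{2}\norm{v}^2$, and the right-hand side is close to $f(\bar x)$ by continuity of $\env f$. Hence $(x,v)$ lies in the $f$-attentive truncation, and $f(x)=\hull f(x)$.
\item Consequently $\gra\lsubdiff\hull f\cap[U\times V]$ embeds into $\gra\lsubdiff f\cap[U_\varepsilon\times V]$, which is monotone by \cref{thm:variational cvx:2}.
\item The same bullet applied at $(\bar x,\bar v)$ must be run in reverse: $\prox f(\bar x+\lambda\bar v)=\bar x$ together with \cref{lem:prox hull subdiff} gives $\bar v\in\lsubdiff\hull f(\bar x)=\rsubdiff\hull f(\bar x)$.
\item \cref{thm:variational cvx:2} therefore shows that $\hull f$ is variationally convex at $\bar x$ for $\bar v$.
\item Since $\hull f$ is subdifferentially continuous, \cref{rem: subdiff cts} lets us drop the attentive restriction in \cref{thm:variational cvx:4} applied to $\hull f$. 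After shrinking $U,V$, this yields exactly \eqref{eq:hull vc ineq}.
\end{itemize}

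\textbf{(ii)$\Rightarrow$(i).} I verify \cref{thm:variational cvx:2} for $f$.
\begin{itemize}
\item Prox-regularity gives $\bar v\in\partial_p f(\bar x)\subseteq\rsubdiff f(\bar x)$.
\item The key claim: there is $\varepsilon>0$ such that, for small $\lambda$, every $(x,v)\in\gra\lsubdiff f$ with $\norm{x-\bar x}<\varepsilon$, $f(x)<f(\bar x)+\varepsilon$ and $\norm{v-\bar v}<\varepsilon$ satisfies $x=\prox f(x+\lambda v)$.
\item Granting the claim, \cref{lem:prox hull subdiff} gives $v\in\lsubdiff\hull f(x)$, so $(x,v)\in\gra\lsubdiff\hull f\cap[U\times V]$.
\item Take two such pairs $(x,v)$ and $(y,u)$. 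Apply \eqref{eq:hull vc ineq} at each pair, evaluated at the other point, and add the two inequalities. This gives $\ip{x-y}{v-u}\ge0$, so the attentive truncation of $\lsubdiff f$ is monotone.
\end{itemize}

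\textbf{Main obstacle.} The hard step is the uniform claim $x=\prox f(x+\lambda v)$ over the whole truncation, with one $\varepsilon$ for a fixed small $\lambda$. Prox-regularity only supplies the inequality $f(x')\ge f(x)+\ip{v}{x'-x}-\tfrac r2\norm{x'-x}^2$ for $x'$ in a ball around $\bar x$. For $\lambda<1/r$ this makes $x$ the unique minimizer of $f(\cdot)+\tfrac1{2\lambda}\norm{\cdot-(x+\lambda v)}^2$ over that ball. To exclude competitors far away, I would use prox-boundedness, following the argument of \cite[Theorem 4.4]{poliquin1996} and \cite[Proposition 13.37]{rockafellar_variational_1998}. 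As $\lambda\downarrow0$, minimizers of the proximal subproblem at points near $\bar x+\lambda\bar v$ are forced into the ball, and the bound $f(x)<f(\bar x)+\varepsilon$ keeps the attentive pairs inside that regime. In practice I would cite the localization statement of Poliquin--Rockafellar directly, namely that $\prox f=(\Id+\lambda T)^{-1}$ near $\bar x+\lambda\bar v$ with $T$ the $f$-attentive truncation.
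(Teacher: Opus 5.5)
Your proof is correct, and it reaches (i)$\Rightarrow$(ii) by a different route from the paper.

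\textbf{(ii)$\Rightarrow$(i).} This is essentially the paper's argument. Both use the localization $\prox f=(\Id+\lambda T)^{-1}$ near $\bar x+\lambda\bar v$ from \cite[Proposition 13.37]{rockafellar_variational_1998}, fixing $\lambda$ first and then $\varepsilon$, together with \cref{lem:prox hull subdiff}. You close by adding two hull inequalities and invoking \cref{thm:variational cvx:2}. The paper instead uses $f\ge\hull f$ and $f=\hull f$ on $\ran\prox f$, and invokes \cref{thm:variational cvx:4}.

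\textbf{(i)$\Rightarrow$(ii).} Here the routes differ.
\begin{itemize}
\item The paper starts from convexity of $\env f$ near $\bar x+\lambda\bar v$ (\cref{thm:variational cvx:3}). It rewrites this as a local quadratic upper bound on $(\lambda f+j)^*$, globalizes it with an indicator function, and conjugates. It then shows the infimal convolution $q_u^*\infconv\sigma_{U_\lambda}$ is exact at $w=z$. This yields the explicit inequality \eqref{eq:hull ineq} for every $u$ near $\bar x+\lambda\bar v$.
\item You show that the graph of $\lsubdiff\hull f$ near $(\bar x,\bar v)$ lies inside the $f$-attentive localization of $\gra\lsubdiff f$, so it is monotone. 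This step uses only prox-regularity, single-valuedness of $\prox f$, and continuity of $\env f$ via $f(x)=\env f(x+\lambda v)-\tfrac{\lambda}{2}\norm{v}^2$.
\item Rockafellar's characterization then makes $\hull f$ itself variationally convex. Subdifferential continuity of the hypoconvex function $\hull f$ removes the attentive restriction.
\end{itemize}

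\textbf{What each route buys.} Your argument is shorter and avoids both conjugate duality and the Moreau-envelope characterization. Combined with your (ii)$\Rightarrow$(i) inclusion, it also brings out a fact the paper leaves implicit. Under prox-regularity alone, for suitable $U,V,\varepsilon$ one has $\gra\lsubdiff\hull f\cap[U\times V]=\gra\lsubdiff f\cap[U_\varepsilon\times V]$. Hence (ii) says exactly that $\hull f$ is variationally convex at $\bar x$ for $\bar v$.

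The cost is that you hand the globalizing step to the deep direction of Rockafellar's theorem, applied to $\hull f$: a monotone localization implies a local subgradient inequality. The paper's computation is self-contained once envelope convexity is available. It also produces a quantitative inequality tied directly to the convexity neighborhood of $\env f$.
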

\begin{proofitemize}
	\item ``\ref{thm:vc:1} \(\Rightarrow\) \ref{thm:vc:5}''
    This is the nontrivial implication. Variational convexity implies that \(f\) is prox-regular at \(\bar x\) for \(\bar v\). For sufficiently small \(0<\lambda<\lambda_f\), we have that, through \cref{thm:variational cvx:3},
	\begin{equation}\label{eq:single-valued prox}
	 \prox f\text{ is single-valued and } \env f\text{ is convex on \(U_\lambda\defeq\bar x+\lambda\bar v+\varepsilon_1\ball\) for some \(\varepsilon_1>0\)},
	\end{equation}
	see \cite[Proposition 13.37]{rockafellar_variational_1998}.\
	We claim that there exists \(0 < \varepsilon < \varepsilon_1\) such that, for every $u \in \bar x+\lambda\bar v+\varepsilon \ball$,
	\begin{equation}\label{eq:hull ineq}
		(\forall \norm{z-\bar x}<\varepsilon)~
		\hull f(z)
	\geq
		\hull f(\prox f(u))+\tfrac{1}{\lambda}\ip{u-\prox f(u)}{z-\prox f(u)}.
	\end{equation}
	Taking the claim \eqref{eq:hull ineq} as granted for now, we note that, for every $x \in \bar x + \tfrac{\varepsilon_1}{2} \ball$ and every $v \in \bar v + \tfrac{\varepsilon_1}{2\lambda} \ball$, it holds
	\begin{equation}\label{eq:restrictions}
		\norm{x+\lambda v-\bar x-\lambda\bar v}<\varepsilon_1.
	\end{equation}
Choose \(\hat\varepsilon<\min(\tfrac{\varepsilon_1}{2},\tfrac{\varepsilon_1}{2\lambda},\varepsilon)\), and
	\[
		U:=\bar x+\hat\varepsilon\ball\text{ and }V:=\bar v+\hat\varepsilon\ball.
	\]
	Pick \((x,v)\in\gra\lsubdiff\hull f\cap[U\times V]\).
	Then \cref{lem:prox hull subdiff} entails that
	\begin{equation}\label{eq:localization}
			% \norm{x+\lambda v-\bar x-\lambda\bar v}<\varepsilon
			% \text{ and consequently }
			x\in\conv\left(\prox f(x+\lambda v)\right)=\prox f(x+\lambda v),
	\end{equation}
	where the equality owes to \eqref{eq:restrictions} and \eqref{eq:single-valued prox}.
	Invoke \eqref{eq:restrictions} to see that our claim \eqref{eq:hull ineq} is applicable to \(u:=x + \lambda v\) and yields
	\begin{align*}
		(\forall z\in U)\quad
		\hull f(z)
		% &\geq
		% 	\hull f(\prox f(x+\lambda v))+\tfrac{1}{\lambda}\ip{x+\lambda v-\prox f(x+\lambda v)}{z-\prox f(x+\lambda v)}\\
		\geq
			\hull f(x)+\tfrac{1}{\lambda}\ip{x+\lambda v-x}{z-x}
			= \hull f(x)+\ip{v}{z-x},
	\end{align*}
	which is the sought \eqref{eq:hull vc ineq}.

Now we justify the claim \eqref{eq:hull ineq}. First we prove that there exists \(0 < \varepsilon_2 < \varepsilon_1\) such that, for every $u \in \bar x+\lambda\bar v + \varepsilon_2 \ball$,
	\begin{equation}\label{eq:infconv claim}
		(\forall z\in \bar x+\varepsilon_2\ball)\quad
		z+u-\prox f(u)\in U_\lambda.
	\end{equation}
Arguing by contradiction, suppose that there exist sequences \(u_k\to\bar x+\lambda\bar v\) and \(z_k\to\bar x\) as $k \to +\infty$ such that
	\[
	z_k+u_k-\prox f(u_k)\not\in U_\lambda \quad \forall k \in \N.
	\]
But \(\bar x=\prox f(\bar x+\lambda\bar v)\) owing to the prox-regularity of \(f\) at \(\bar x\) for \(\bar v\); see, e.g., \cite[Proposition 13.37]{rockafellar_variational_1998}. Since \(\prox f(u_k)\to \prox f(\bar x+\lambda\bar v) = \bar x\) and \(U_\lambda\) is open, we have
	\[
		\bar x+\lambda\bar v
	=
		\lim_{k\to+\infty}z_k+u_k-\prox f(u_k)
	\not\in U_\lambda,
	\]
which is absurd.

The argument that follows is inspired by the proof of \cite[Equation (2.31)]{Rockafellar2024}, but starts from a different property. Choose \(0 < \varepsilon< \varepsilon_2 \), and fix \(u\in\bar x+\lambda\bar v+\varepsilon\ball\).
	Define
	\[(\forall y\in\Rn)\quad
	q_u(y)
	:=
		\tfrac{1}{2}\norm{y-u}^2
	+
		\ip{\prox f(u)}{y-u}
	+
		(\lambda f+j)^*(u),
	\]
	where \((\lambda f+j)^*(u)\) is finite because \(f\) is prox-bounded with threshold \(\lambda_f>0\) and
	\[
	(\forall \lambda<\lambda'<\lambda_f)\quad
	f+\lambda^{-1}j=\underbracket{f+(\lambda')^{-1}j}_{\text{bounded below}}+(\lambda^{-1}-(\lambda')^{-1})j
	\text{ is 1-coercive.}
	\]
The convexity of \(\env f\)  on \(U_\lambda\) yields
	\begin{equation}\label{eq:cvx env}
		(\forall y\in U_\lambda)\quad
		\env f(y)\geq \env f(u)+\ip{\nabla \env f(u)}{y-u},
	\end{equation}
	where \(\lambda\nabla \env f(u)=u-\prox f(u)\) by prox-regularity \cite[Proposition 13.37]{rockafellar_variational_1998}.\
	Using that
	\(
		\lambda\env f=j-(\lambda f+j)^*,
	\)
\eqref{eq:cvx env} can be equivalently written as
	\begin{equation}\label{eq:conj0}
	(\forall y\in U_\lambda)\quad
		(\lambda f+j)^*(y)
	\leq
		(\lambda f+j)^*(u)
	+\ip{\prox f(u)}{y-u}
	+\tfrac{1}{2}\norm{y-u}^2.
	\end{equation}
	% Fix \(u\in U_\lambda\).
	We globalize \eqref{eq:conj0} by adding an indicator function
	\begin{equation}\label{eq:conj1}
	(\forall y\in\Rn)\quad
	(\lambda f+j)^*(y)
	\leq
	q_u(y)+\delta_{U_\lambda}(y).
	\end{equation}
	Taking conjugate on both sides of \eqref{eq:conj1} yields
	\begin{equation}\label{eq:infconv global}
	(\forall z\in\Rn)\quad
		(\lambda f+j)^{**}(z)
	\geq
		\left(
			q_u+\delta_{U_\lambda}
		\right)^*(z)
	=
		(q_u^*\infconv\sigma_{U_\lambda})(z),
	\end{equation}
	where \(q_u^*\infconv\sigma_{U_\lambda}\) denotes the infimal convolution of \(q_u^*\) and \(\sigma_{U_\lambda}\).\
	For every \(z\in\Rn\),
	\begin{align}
		q_u^*(z)
	&=
		\left(
			\tfrac{1}{2}\norm{\cdot-u}^2+\ip{\prox f(u)}{\cdot}
		\right)^*(z)
	+
		\ip{\prox f(u)}{u}-(\lambda f+j)^*(u)\nonumber\\
	&=
		\tfrac{1}{2}\norm{z-\prox f(u)}^2
	+
		\ip{u}{z}
	-
		(\lambda f+j)^*(u)\nonumber\\
	&=
		\tfrac{1}{2}\norm{z}^2
	+
		\tfrac{1}{2}\norm{\prox f(u)}^2
	+
		\ip{u-\prox f(u)}{z}
	+
		\lambda\env f(u)-\tfrac{1}{2}\norm{u}^2\nonumber\\
	&=
		\tfrac{1}{2}\norm{z}^2
	+
		\lambda f(\prox f(u))+\ip{u-\prox f(u)}{z-\prox f(u)},\label{eq:q conj}
	\end{align}
	where the last equality holds due to \(\lambda \env f(u)=\lambda f(\prox f(u))+\tfrac{1}{2}\norm{u-\prox f(u)}^2\), and hence
	\[
	(\forall z\in\Rn)\quad
	\nabla q_u^*(z)=z+u-\prox f(u).
	\]
From \eqref{eq:infconv claim} and our choice of \(\varepsilon\), we deduce that
	\begin{align*}
	(\forall z\in\bar x+\varepsilon\ball)\quad
		\nabla q_u^*(z)=z+u-\prox f(u)\in U_\lambda,
	\end{align*}
	and, consequently,
	\begin{align*}
		(\forall z\in\bar x+\varepsilon\ball)\quad
		0= N_{U_\lambda}\left(\nabla q_u^*(z)\right)=\left(\lsubdiff\sigma_{U_\lambda}\right)^{-1}\left(\nabla q_u^*(z)\right)
	\Leftrightarrow
		\nabla q_u^*(z)\in\lsubdiff\sigma_{U_\lambda}(0),
	\end{align*}
	which means that the minimum in the infimal convolution
	\(
		(q_u^*\infconv\sigma_{U_\lambda})(z)=\inf_{w\in\Rn}\set{q_u^*(w)+\sigma_{U_\lambda}(z-w)}
	\) is attained at \(w=z\).
	Hence
	\[	(\forall z\in\bar x+\varepsilon\ball)\quad
		(q_u^*\infconv\sigma_{U_\lambda})(z)
	=
		q_u^*(z)+\sigma_{U_\lambda}(0)
	=
		q_u^*(z).
	\]
	Appealing to \eqref{eq:infconv global} yields that
	\begin{equation}\label{eq:infconl attained}
		(\forall z\in\bar x+\varepsilon\ball)\quad
		(\lambda f+j)^{**}(z)
	\geq
		(q_u^*\infconv\sigma_{U_\lambda})(z)
	=
		q_u^*(z).
	\end{equation}
From here, \eqref{eq:hull ineq} can be justified by further expressing the above in terms of the proximal hull as
	\begin{align*}
	(\forall z\in\bar x+\varepsilon\ball)\quad
		\lambda\hull f(z)
	&=
		(\lambda f+j)^{**}(z)-\tfrac{1}{2}\norm{z}^2\geq q_u^*(z)-\tfrac{1}{2}\norm{z}^2\nonumber\\
	&=
		\lambda f(\prox f(u))+\ip{u-\prox f(u)}{z-\prox f(u)}\nonumber\\
	&=
		\lambda\hull f(\prox f(u))+\ip{u-\prox f(u)}{z-\prox f(u)},
	\end{align*}
	where the first equality follows from \cite[Example 11.26(c)]{rockafellar_variational_1998}, the inequality owes to \eqref{eq:infconl attained}, the second equality follows from \eqref{eq:q conj}, and the last equality holds because \(f=\hull f\) on \(\ran\prox f\); see \cite[Example 1.44]{rockafellar_variational_1998}.
	
\item ``\ref{thm:vc:5} \(\Rightarrow\) \ref{thm:vc:1}''
Since \(f\) is assumed to be prox-regular at \(\bar x\) for \(\bar v\), there exist $0 < \lambda < \lambda_f$, \(\varepsilon>0\) and an \(f\)-attentive-\(\varepsilon\)-localization \(T\) of \(\lsubdiff f\) such that
	\begin{equation}\label{eq:hull resolvent}
	\prox f=\left(\Id+\lambda T\right)^{-1}
	\text{ on }\bar x+\lambda\bar v+\varepsilon\ball,
	\end{equation}
see \cite[Proposition 13.37]{rockafellar_variational_1998}. We choose $0 < \lambda < \lambda_f$ such that there exist open convex neighborhoods $U$ of $\bar x$ and $V$ of $\bar v$ with the property that, for every \((x,v)\in\gra\lsubdiff\hull f\cap[U\times V]\), \eqref{eq:hull vc ineq} holds.

Now take \((x,v)\in\gra \lsubdiff f\) such that
	\[
	\norm{x-\bar x}<\frac{\varepsilon}{2}, \ f(x)<f(\bar x)+\varepsilon, \text{ and }
	\norm{v-\bar v}<\min\left(\frac{\varepsilon}{2\lambda},\varepsilon\right).
	\]
	Then \((x,v)\in\gra T\) and
	\(
		\norm{x+\lambda v-\bar x-\lambda\bar v}
	\leq
		\norm{x-\bar x}+\lambda\norm{v-\bar v}
	<\varepsilon.
	\)
	So
	\begin{equation*}
		x
	\in(\Id+\lambda T)^{-1}(x+\lambda v)=\prox f(x+\lambda v)\subseteq\conv\left(\prox f(x+\lambda v)\right),
	\end{equation*}
	where the equality owes to \eqref{eq:hull resolvent}. Invoking \cref{lem:prox hull subdiff}, it yields
	\(
		v
	\in
		\lsubdiff\hull f(x).
	\)
	Assume without loss of generality that
	\[
	\bar x+\frac{\varepsilon}{2}\ball\subseteq U\text{ and }
	\bar v+\min\left(\frac{\varepsilon}{2\lambda},\varepsilon\right)\ball\subseteq V,
	\]
it yields
	\begin{equation*}
		(x,v)\in\gra\lsubdiff\hull f\cap[U\times V].
	\end{equation*}
	Hence \eqref{eq:hull vc ineq} furnishes
	\begin{align*}
		(\forall z\in U)\quad
		f(z) \geq 
		\hull f(z)\geq \hull f(x)+\ip{v}{z-x} =
		f(x)+\ip{v}{z-x},
	\end{align*}
	where the equality owes to \(x\in\prox f(x+\lambda v)\) and the fact that \(f=\hull f\) on \(\ran\prox f\); see for instance \cite[Example 1.44]{rockafellar_variational_1998}.\
	The assertion of \ref{thm:vc:1} now follows readily from \cref{thm:variational cvx:4}.
	\qed
\end{proofitemize}
\begin{remark}
	\cref{thm:vc} complements a recent characterization of variational convexity via the Moreau envelope. Specifically, Khanh, Mordukhovich, and Phat \cite[Theorem 3.2]{Khanh2023variational} showed that variational convexity of \(f\) at \(\bar x\) for \(\bar v\in\lsubdiff f(\bar x)\) is equivalent to local convexity of the Moreau envelope \(\env f\) around the shifted point \(\bar x+\lambda\bar v\); see also \cite[Proposition 5.4]{poliquin1996}.\
	In contrast, \Cref{thm:vc} characterizes variational convexity through a local convex subgradient inequality for \(\hull f\) around \(\bar x\) itself, without any shift of the reference point.
\end{remark}

Finally, we present an example showing how \cref{thm:vc} reveals a hidden convex subgradient inequality for a nonconvex function of practical interest.
The Minimax Concave Penalty (MCP) with parameter \(\lambda>0\) is defined as
\begin{align*}
	\rho_\lambda(x)
= 
	\begin{cases}
		|x|-\tfrac{x^2}{2\lambda}, & \text{if } |x|\leq\lambda,\\
		\tfrac{\lambda}{2}, & \text{if } |x|>\lambda,
	\end{cases}
\end{align*}
see \cite[Section 2.1]{zhang2010nearly} or \cite[Equation (12)]{mazumder2011sparsenet}.
As a nonconvex approximation of the zero norm, MCP is widely used in statistical learning and signal processing; see e.g.\ \cite{mazumder2011sparsenet,li2024recursive} and the monograph \cite[Section 4.6]{hastie2015statistical}. Despite its nonconvexity, direct calculation shows that MCP satisfies the convex subgradient inequality described in \eqref{eq:hull vc ineq}. This is not merely a coincidence. Indeed, the MCP can be expressed as a rescaled proximal hull of the one-dimensional zero norm \(\norm{\cdot}_0\), namely,
\[
	\rho_\lambda(x) = \tfrac{\lambda}{2}h_\lambda \|\cdot\|_0\left(\sqrt{2\lambda^{-1}}x\right) \quad \forall x \in \R.
\]
To verify this representation, note that
\[
	h_\lambda\|\cdot\|_0(x) = 
	\begin{cases}
	\sqrt{2\lambda^{-1}}|x| - \frac{1}{2\lambda}x^2, & \text{if } |x| \leq \sqrt{2\lambda},\\
	1, & \text{if } |x| > \sqrt{2\lambda};
	\end{cases}
\]
see for instance \cite[Example 7.3]{luo2024level}.
Since \(\norm{\cdot}_0\) is variationally convex at every point and for every associated subgradient (cf. \cref{eg:zero norm vc}), \cref{thm:vc} implies that its proximal hull inherits the corresponding convex subgradient inequality. Consequently, \cref{thm:vc} provides a conceptual explanation for why the MCP satisfies \eqref{eq:hull vc ineq} despite being nonconvex.

\subsection{An epigraphical characterization of variational convexity}

We now turn to a geometric characterization of variational convexity. As in classical convex analysis, where convexity of a function is equivalent to convexity of its epigraph, variational convexity of a function can be characterized in terms of the variational convexity of its epigraph. To this end, we first introduce the notion of variational convexity for sets.

\begin{definition}
	A nonempty closed set \(C\subseteq\Rn\) is said to be \emph{variationally convex} at \(\bar x\in C\) for \(\bar v\in N_C(\bar x)\), if there exist open convex neighborhoods \(U\ni\bar x\) and \(V\ni\bar v\) such that, for every
	\(
		(x,v)\in\gra N_C\cap\left[U\times V\right]
	\),
	\begin{equation}\label{eq:vc set}
		(\forall x'\in C\cap U)\quad
		\ip{v}{x'-x}\leq0.
	\end{equation}
\end{definition}
\begin{remark}
	Clearly, a nonempty closed set \(C\) is variationally convex at \(\bar x\) for \(\bar v\in N_C(\bar x)\) if and only if the indicator function \(\delta_C\) is variationally convex at \(\bar x\) for \(\bar v\in\lsubdiff\delta_C(\bar x)\); see \cref{thm:variational cvx}.
\end{remark}

We provide below several examples of variationally convex sets.
\begin{example}
	Consider
	\[
	C=:
	\set{(x,\alpha)\in\R^2}[\alpha\geq\sqrt{|x|}].
	\]
	Then \(C\) is variationally convex at \((0,0)\) for \((0,-1)\).
\end{example}
\begin{proof}
	Notice first that for \((x,\alpha)\in C\)
	\begin{equation}\label{eq:normal cone}
		N_C(x,\alpha)
	=
		\begin{cases}
			\R\times\R_-, &x=\alpha=0,\\
			\R_+(1,-2\sgn(x)|x|^{1/2}),& x\neq0, \alpha=\sqrt{|x|},\\
			\set{0},&\text{otherwise}.
		\end{cases}
	\end{equation}
	Take \((x,\alpha)\in C\) and \((v,\beta)\in N_C(x,\alpha)\) satisfying
	\begin{equation}\label{eq:vc constraint}
		\norm{(x,\alpha)}<1/16
		\text{ and }
		\norm{(v,\beta)-(0,-1)}<1/2.
	\end{equation}
	Our goal is to show that
	\begin{align}
		(\forall (x',\alpha')\in C \text{ with }\norm{(x',\alpha')}<1/16) \text{ it holds }\ip{(x'-x,\alpha'-\alpha)}{(v,\beta)}\leq0.\label{eq:vc set eg}
	\end{align}
	When \(\alpha>\sqrt{|x|}\), \eqref{eq:vc set eg} holds trivially and therefore it suffices to consider the case \(\alpha=\sqrt{|x|}\).
	We claim that under \eqref{eq:vc constraint},
	\begin{equation}
		\sqrt{|x|}=\alpha
	\Rightarrow
		x=\alpha=0.
	\end{equation}
	Indeed, suppose that \(x\neq0\). In this case, invoking \eqref{eq:normal cone}, it yields
	\(
		|\beta|
	=
		2|v||x|^{1/2}
	<
		1/2,
	\)
	but \eqref{eq:vc constraint} entails that \(-3/2<\beta<-1/2\), which is absurd.
	Hence \eqref{eq:vc set eg} reduces to
	\begin{equation}\label{eq:vc set eg1}
			(\forall (x',\alpha')\in C\text{ with }\norm{(x',\alpha')}<1/16) \text{ it holds }
			\ip{x'}{v}+\alpha'\beta\leq0.
	\end{equation}
	It is easy to see that, for every $(x',\alpha')\in C$ with $\norm{(x',\alpha')}<1/16$,
	\[
	\ip{x'}{v}\leq|x'||v|\leq|x'|/2\leq|x'|^{1/2}/2\leq\alpha'/2<-\alpha'\beta,
	\]
	where the last inequality holds because \(\beta<-1/2\), furnishing the sought \eqref{eq:vc set eg1}.
 \end{proof}

\begin{example}\label{eg:zero norm level set}
For \(r\geq0\), 	define 
$$C:=\set{x\in\Rn}[\norm{x}_0\leq r].$$
	Let \(\bar x\in C\) be such that \(\norm{\bar x}_0=r\).
	Then \(C\) is variationally convex at \(\bar x\) for every \(\bar v\in N_C(\bar x)\).
\end{example}
\begin{proof}
Define
\[
	(\forall x\in\Rn)\quad
	\supp(x):=\spn\set{e_i}[i\in \mathcal I(x)],
\]
where \(e_i\in\Rn\) denotes the \(i\)-th canonical basis vector. Then \cite[Theorem 3.9]{bauschke2014restricted} states that
\begin{equation}\label{eq:zero cone}
		(\forall x\in C)\quad
		N_C(x)
		=
		\set{v\in\Rn}[\norm{v}_0\leq n-r]
		\cap
		(\supp(x))^\perp.
\end{equation}

	Pick \(\varepsilon>0\) and \((x,v)\in\gra N_C\) such that \(\norm{x-\bar x}<\varepsilon\) and \(\norm{v-\bar v}<\varepsilon\).
	Our goal is to show that
	\begin{equation}
		(\forall x'\in C\cap(\bar x+\varepsilon\ball))\quad
		\ip{v}{x'-x}\leq0.
	\end{equation}
	Shrinking \(\varepsilon>0\) if necessary, we can assume without loss of generality that
	\begin{equation}\label{eq:I}
		(\forall \norm{x-\bar x}<\varepsilon)\quad
		\mathcal I(\bar x)\subseteq \mathcal I(x), \ \mbox{thus}, \
		\norm{x}_0\geq \norm{\bar x}_0=r.
	\end{equation}
	Hence, in particular,
	\begin{equation*}
		(\forall x\in C\cap(\bar x+\varepsilon\ball))\quad
        \norm{x}_0=r = \norm{\bar x}_0.
	\end{equation*}
Taking into account \eqref{eq:I}, we must in fact have
    \begin{equation}\label{eq:samne I}
        (\forall x\in C\cap(\bar x+\varepsilon\ball))\quad
    \mathcal I(x)=\mathcal I(\bar x).
    \end{equation}
Note that \((\supp(\bar x))^\perp=\set{v\in\Rn}[v_i=0\text{ for }i\in\mathcal{I}(\bar x) ]\).
    Then \(\norm{\bar x}_0=r\) implies \(\norm{v}_0\leq n-r\) for every \(v\in(\supp(\bar x))^\perp\), and, invoking \eqref{eq:zero cone}, yields
	\(
	N_C(\bar x)=\left(\supp(\bar x)\right)^\perp
	\).
    For every \( x'\in C\cap(\bar x+\varepsilon\ball)\) and our chosen \((x,v)\in\gra N_C\),
    \eqref{eq:samne I} entails
    \(
        \mathcal I(x)=\mathcal I(x')=\mathcal I(\bar x)
     \),
 so we have
	\[
		v\in N_C(x)=N_C(\bar x)=\left(\supp(\bar x)\right)^\perp
	\text{ and }
		x,x'\in\supp(\bar x),
	\]
    enforcing \(\ip{v}{x'-x}=0\).
\end{proof}

\begin{remark}
The set \(C\) in \cref{eg:zero norm level set} may fail to be variationally convex at \(\bar x\) with \(\norm{\bar x}_0<r\) for \(\bar v\in N_C(\bar x)\). Choose $r:=1$, thus,
	\[
	C=\set{x\in\R^2}[\norm{x}_0\leq 1].
	\]
	Then, according to \eqref{eq:zero cone},
	\[
	N_C(x)
	=
	\begin{cases}
	\set{v\in\R^2}[\norm{v}_0\leq1],&\text{ if }\norm{x}_0=0,\\
    (\supp(x))^\perp,&\text{ if }\norm{x}_0=1.
	\end{cases}
	\]
    Let \(\bar x=(0,0)\), \(\bar v=(0,0)\in N_C(\bar x)\), and let \(
    \varepsilon>0
    \) be arbitrary.
	Choose \(v=(\varepsilon,0)\), \(x=(0,\varepsilon)\), and \(x'=(\varepsilon,0)\).
	Then \(v\in N_C(x)\) and
	\(
	\ip{v}{x'-x}=\varepsilon^2>0
	\).
\end{remark}
Let \(C\subseteq\Rn\) be a nonempty closed set and let \(\bar x\in C\).
We say that \(\bar v\in\Rn\) is a \emph{proximal normal vector} to \(C\) at \(\bar x\), denoted by \(\bar v\in N^P_C(\bar x)\), if there exists \(\tau>0\) such that \(\bar x\in \proj_C(\bar x+\tau \bar v)\); see \cite[Example 6.16]{rockafellar_variational_1998}.
Proximal normal cones will play an important role in the forthcoming epigraphical characterization of variational convexity.

\begin{theorem}\label{thm:epi}
	A proper and lower semicontinuous function \(\func{f}{\Rn}{\RE}\) is variationally convex at \(\bar x \in \dom f\) for \(\bar v\in\lsubdiff f(\bar x)\) if and only if its epigraph \(\epi f\) is variationally convex at \((\bar x,f(\bar x))\) for \((\bar v,-1)\in N_{\epi f}(\bar x,f(\bar x))\).
\end{theorem}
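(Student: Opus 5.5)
The plan is to prove both implications by passing between subgradients of \(f\) and normals to \(\epi f\), and to use the inequality characterization \cref{thm:variational cvx:4} on the function side. Two standard facts will be used throughout.

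\emph{Fact (a)} (\cite[Theorem 8.9]{rockafellar_variational_1998}). For \(x\in\dom f\),
\[
v\in\lsubdiff f(x)\iff (v,-1)\in N_{\epi f}(x,f(x)).
\]

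\emph{Fact (b).} If \((v,\beta)\in N_{\epi f}(x,\alpha)\) with \(\beta<0\), then \(\alpha=f(x)\) and \(v/|\beta|\in\lsubdiff f(x)\). The reason is that at a point with \(\alpha>f(x)\) both vertical directions \((0,\pm1)\) are tangent, so every normal there has \(\beta=0\). The remaining claim then follows from Fact (a) by positive homogeneity of the normal cone.

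\textbf{``\(\Rightarrow\)''.}
\begin{enumerate}
\item Take \(U,V,\varepsilon\) from \cref{thm:variational cvx:4}.
\item Choose a neighborhood \(W=U'\times(f(\bar x)-\delta,f(\bar x)+\delta)\) of \((\bar x,f(\bar x))\), with \(U'\subseteq U\) and \(\delta<\varepsilon\). Choose a neighborhood \(V'\) of \((\bar v,-1)\) small enough that every \((v,\beta)\in V'\) has \(\beta<0\) and \(v/|\beta|\in V\).
\item Let \(((x,\alpha),(v,\beta))\in\gra N_{\epi f}\cap[W\times V']\). By Fact (b), \(\alpha=f(x)\) and \(v/|\beta|\in\lsubdiff f(x)\).
\item Since \(f(x)=\alpha<f(\bar x)+\varepsilon\), the pair \((x,v/|\beta|)\) lies in \(U_\varepsilon\times V\). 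The attentive restriction is thus absorbed automatically by the vertical size of \(W\).
\item For \((x',\alpha')\in\epi f\cap W\), \cref{thm:variational cvx:4} gives
\[
\alpha'\ge f(x')\ge f(x)+\ip{v/|\beta|}{x'-x}.
\]
Multiplying by \(|\beta|=-\beta\) yields \(\ip{v}{x'-x}+\beta(\alpha'-\alpha)\le0\), which is \eqref{eq:vc set}.
\end{enumerate}

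\textbf{``\(\Leftarrow\)''.}
\begin{enumerate}
\item Let \(W=U\times(f(\bar x)-\delta,f(\bar x)+\delta)\) and \(V'\ni(\bar v,-1)\) be the neighborhoods from the set definition. Shrink \(U\) so that, by lower semicontinuity, \(f>f(\bar x)-\delta\) on \(U\).
\item Choose \(V\ni\bar v\) with \(V\times\{-1\}\subseteq V'\). Take \(\varepsilon<\delta/4\).
\item Shrink \(U\) and \(V\) further so that \(|\ip{v}{x'-x}|<\delta/4\) for all \(x,x'\in U\) and \(v\in V\).
\item Let \((x,v)\in\gra\lsubdiff f\cap[U_\varepsilon\times V]\). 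Then \(f(x)\in(f(\bar x)-\delta,f(\bar x)+\varepsilon)\), so \((x,f(x))\in W\). By Fact (a), \((v,-1)\in N_{\epi f}(x,f(x))\cap V'\).
\item Fix \(y\in U\). There are two cases.
\begin{itemize}[label={}]
\item \emph{Case \(f(y)<f(\bar x)+\delta\).} Then \((y,f(y))\in\epi f\cap W\). The set inequality gives \(\ip{v}{y-x}-(f(y)-f(x))\le0\).
\item \emph{Case \(f(y)\ge f(\bar x)+\delta\).} Then
\[
f(y)>f(\bar x)+\tfrac{\delta}{2}>f(x)+\tfrac{\delta}{4}>f(x)+\ip{v}{y-x}.
\]
\end{itemize}
\item In either case \(f(y)\ge f(x)+\ip{v}{y-x}\) for all \(y\in U\). \Cref{thm:variational cvx:4} then yields variational convexity of \(f\).
\end{enumerate}

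\textbf{Main obstacle.} I expect the hardest part to be the careful handling of Fact (b) for limiting normals. A limiting normal with \(\beta<0\) at \((x,\alpha)\) must still force \(\alpha=f(x)\). The planned argument is that such a normal arises as a limit of regular normals \((v_k,\beta_k)\) with \(\beta_k<0\). Regular normals with \(\beta_k<0\) can only sit at graph points, and then \cite[Theorem 8.9]{rockafellar_variational_1998} applies. The other delicate point is the neighborhood bookkeeping in the converse direction, where points \(y\) with large \(f(y)\) fall outside the epigraphical neighborhood and must be handled separately, as in the second case above.
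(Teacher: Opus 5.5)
Your ``$\Leftarrow$'' direction is correct and is essentially the paper's argument: the same vertical window and the same two-case split on the size of $f(y)$. The gap is Fact (b), on which all of ``$\Rightarrow$'' rests, because it is false for limiting normals. Take the Heaviside function $h$ of \cref{eg:heavy vc}. At the graph points $(1/k,1)$ the set $\epi h$ coincides locally with $\{\alpha\ge 1\}$, so $(0,-1)\in\widehat N_{\epi h}(1/k,1)$. Passing to the limit gives $(0,-1)\in N_{\epi h}(0,1)$, although $1>h(0)=0$. Localizing does not rescue it either. For the lsc step function with $f\equiv 1/k$ on $(1/(k+1),1/k]$, $f\equiv0$ on $(-\infty,0]$ and, say, $f\equiv 1$ on $(1,+\infty)$, the points $(1/(k+1),1/k)$ lie strictly above the graph, converge to $(0,0)$, and carry the normal $(0,-1)=(\bar v,-1)$ with $\bar v=0\in\lsubdiff f(0)$. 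Your sketched justification breaks exactly where you suspected. The approximating regular normals with $\beta_k<0$ do sit at graph points $(x_k,f(x_k))$, but $f(x_k)=\alpha_k\to\alpha$, and $\alpha$ need not equal $f(x)$. So $x_k\to x$ is not $f$-attentive, and \cite[Theorem 8.9]{rockafellar_variational_1998} tells you nothing at $(x,\alpha)$. The local fact you need is true only because of the hypothesis. It is the claim \eqref{eq:proximal cone claim}, which the paper proves via prox-regularity of $\epi f$ (Poliquin--Rockafellar) and a comparison of projections of $(x,\beta)+\tau'(v,b)$.

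Your set-up admits a cheaper repair that uses the hypothesis directly.
\begin{itemize}
\item[(1)] Write $(v,\beta)=\lim_k(v_k,\beta_k)$ with $(v_k,\beta_k)\in\widehat N_{\epi f}(x_k,\alpha_k)$ and $(x_k,\alpha_k)\to(x,\alpha)$.
\item[(2)] For large $k$ you have $\beta_k<0$. Your tangent-cone argument, which is valid for regular normals, then gives $\alpha_k=f(x_k)$ and $w_k:=v_k/|\beta_k|\in\rsubdiff f(x_k)$.
\item[(3)] Eventually $x_k\in U$. Also $f(x_k)=\alpha_k\to\alpha<f(\bar x)+\delta<f(\bar x)+\varepsilon$, and $w_k\to v/|\beta|\in V$ with $V$ open. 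Hence $(x_k,w_k)\in\gra\lsubdiff f\cap[U_\varepsilon\times V]$ for large $k$.
\item[(4)] \Cref{thm:variational cvx:4} now yields $f(y)\ge\alpha_k+\ip{w_k}{y-x_k}$ for all $y\in U$. Letting $k\to+\infty$ gives $f(y)\ge\alpha+\ip{v/|\beta|}{y-x}$ on $U$.
\end{itemize}
Taking $y=x$ gives $f(x)\ge\alpha$, hence $\alpha=f(x)$, and your step 5 then runs as written. Alternatively, take $y=x'$ for $(x',\alpha')\in\epi f\cap W$ and multiply by $|\beta|$, which gives \eqref{eq:vc set} directly. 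With this replacement your proof is complete, and it avoids the prox-regularity-of-the-epigraph machinery used in the paper.
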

\begin{proof}
	Assume without loss of generality that \(f(\bar x)=0\) and \(\bar x=\bar v=0\).
	\begin{newitemize}
		\item``\(\Rightarrow\)'': The variational convexity of \(f\) at \(\bar x=0\) for \(\bar v=0\) allows us to choose  \(0<\varepsilon<1/2\) such that,
	for every \((x,u)\in\gra\lsubdiff f\) satisfying \(\norm{x}<\varepsilon\), \(f(x)<\varepsilon\), and \(\norm{u}<2\varepsilon\), we have the following
	\begin{equation}\label{eq:epi1}
		(\forall \norm{x'}<\varepsilon)\quad
		f(x')\geq f(x)+\ip{u}{x'-x}.
	\end{equation}
In view of the lower semicontinuity of \(f\) at \(\bar x=0\), there exists \(\varepsilon'>0\) such that \(f(x)>f(\bar x)-\varepsilon=-\varepsilon\) for every \(\norm{x}<\varepsilon'\). 
Pick  \((x,\beta)\in\epi f\), and \((v,b)\in N_{\epi f}(x,\beta)\) satisfying
	\begin{equation}\label{eq:epi}
		% \norm{x}<\varepsilon,\beta<\varepsilon,
		\norm{(x,\beta)}<\min(\varepsilon,\varepsilon') \ \text{and} \
		\norm{(v,b)-(0,-1)}<\varepsilon.
	\end{equation}
Write \(X:=\{(x,\beta) \in \Rn \times \R | \norm{(x,\beta)}<\min(\varepsilon,\varepsilon')\}\) for simplicity.
	We aim to show that
	\begin{equation}\label{eq:set vc}
		(\forall (x',\alpha)\in\epi f\cap X)\quad
		\ip{(v,b)}{(x',\alpha)-(x,\beta)}\leq0.
	\end{equation}
	The choice of \(\varepsilon\) ensures that \(-3/2<b<-1/2\).
	So \((-v/b,-1) \in N_{\epi f}(x,\beta)\).
	We claim that for our chosen \((v,b)\in N_{\epi f}(x,\beta)\)
	\begin{equation}\label{eq:proximal cone claim}
		f(x)=\beta,
	\end{equation}
	and, consequently, by the epigraphical characterization of limiting subgradient (see, e.g., \cite[Theorem 8.9]{rockafellar_variational_1998}),
	\begin{equation}\label{eq:normal is subgrad}
		-v/b\in\lsubdiff f(x)
		\text{ with }
		\norm{v/b}<2\norm{v}<2\varepsilon.
	\end{equation}
Hence, \eqref{eq:epi1} furnishes
	\begin{equation*}
		(\forall (x',\alpha)\in\epi f\cap X)\quad
		\alpha
	\geq
		f(x')\geq f(x)-\ip{v/b}{x'-x}
	=
		\beta-\ip{v/b}{x'-x},
	\end{equation*}
entailing \eqref{eq:set vc}.

	It remains to justify \eqref{eq:proximal cone claim}.\
	Suppose to the contrary that \(\beta>f(x)\).
	Since \(f\) is variationally convex at \(0\) for \(0\), it is prox-regular at \(0\) for \(0\).
	Invoking \cite[Theorem 3.5]{poliquin1996} yields that \(\epi f\) is prox-regular at \((0,0)\) for \((0,-1)\).\
	Then, shrinking \(\varepsilon>0\) if necessary and adjusting \(\varepsilon'>0\), which depends on \(\varepsilon\), we may assume without loss of generality that there exists \(\tau>0\) such that
	\begin{equation}\label{eq:prox-regular set}
		\begin{aligned}
			&(x,\beta)
		\text{ is the unique projection of }
			(x,\beta)+\tau(v,b) \text{ onto }\set{(x',\alpha)\in\epi f}[\norm{(x',\alpha)}<\sqrt{2}\varepsilon];
		\end{aligned}
	\end{equation}
	see, e.g., \cite[Definition 2.10]{poliquin1996}.\
	Recall that \(b<0\) and choose \(0<\tau'<\min((f(x)-\beta)/b,\tau)\).\
	Then \(\alpha:=\beta+\tau' b\) satisfies \(f(x)<\alpha<\beta\), and therefore \(|\alpha|<\max(|\beta|,|f(x)|)\) and \((x,\alpha)\in\epi f\).

Before invoking \eqref{eq:prox-regular set}, we show first that \(|\alpha|<\varepsilon\). Indeed, we already know from \eqref{eq:epi} that \(|\beta|<\varepsilon\) and \(\norm{x}<\min(\varepsilon,\varepsilon')\).
    Consequently \(f(x)\leq\beta<\varepsilon\) because \((x,\beta)\in\epi f\) and \(f(x)>-\varepsilon\) due to \(\norm{x}<\varepsilon'\), which means that \(|f(x)|<\varepsilon\). Altogether, we have shown that \(|\beta|<\varepsilon\) and \(|f(x)|<\varepsilon\), hence \(|\alpha|<\varepsilon\) as claimed. Then it holds that
	\[
		(x,\alpha)\in\set{(x',\alpha)\in\epi f}[\norm{(x',\alpha)}<\sqrt{2}\varepsilon].
	\]
Due to \eqref{eq:prox-regular set}, \((x,\beta)\) is the unique projection of \((x,\beta)+\tau'(v,b)\) onto $\set{(x',\alpha)\in\epi f}[\norm{(x',\alpha)}<\sqrt{2}\varepsilon]$, for every \(\tau'<\tau\), which implies
	\begin{equation*}
		\norm{\tau'(v,b)}
	<
		\norm{(x,\alpha)-\left[(x,\beta)+\tau'(v,b)\right]}
	=
		\norm{(-\tau' v,\alpha-\beta-\tau' b)}.
	\end{equation*}
	This means that
	\(
		|\tau' b|<|\alpha-\beta-\tau' b|=0
	\), which is absurd.
	\item ``\(\Leftarrow\)'':
	This is the easy direction.
	We aim to find \(\hat\varepsilon>0\) such that, for every \((x,v)\in\gra\lsubdiff f\) satisfying \(\norm{x}<\hat\varepsilon\), \(f(x)<\hat\varepsilon\), and \(\norm{v}<\hat\varepsilon\), one has
	\begin{equation}\label{eq:vc set converse:goal}
		(\forall \norm{x'}<\hat\varepsilon)\quad
		f(x')\geq f(x)+\ip{v}{x'-x}.
	\end{equation}
Once this is established, the proof is completed by \cref{thm:variational cvx:4}. 
According to the assumption, there exists \(\varepsilon>0\) such that, for every \((v,b)\in N_{\epi f}(x,\beta)\) satisfying
	\begin{equation}\label{eq:vc set converse}
		\norm{(x,\beta)}<\varepsilon,
		\norm{(v,b)-(0,-1)}<\varepsilon,
	\end{equation}
	it holds that
	\begin{equation}\label{eq:vc set converse1}
		(\forall (x',\alpha)\in\epi f\cap\{(x,\beta) \in \Rn \times \R | \norm{(x,\beta)}< \varepsilon\})\quad
		\ip{(v,b)}{(x',\alpha)-(x,\beta)}\leq0.
	\end{equation}
	Now let \(\hat\varepsilon<\min(\sqrt{\varepsilon}/2,\varepsilon/2)\), and choose \((x,v)\in\gra\lsubdiff f\) satisfying
	\(\norm{x}<\hat\varepsilon\), \(f(x)<\hat\varepsilon\), and \(\norm{v}<\hat\varepsilon\). 
	Then \((v,-1)\in N_{\epi f}(x,f(x))\), by the epigraphical description of limiting subdifferential \cite[Theorem 8.9]{rockafellar_variational_1998}, and \eqref{eq:vc set converse} holds with \(\beta:=f(x)\) and \(b:=-1\).
    
Choose \(x'\) such that \(\norm{x'}<\hat\varepsilon\) and consider two cases.
	Suppose first that \(f(x')<\varepsilon\).
    Then, shrinking \(\hat\varepsilon\) if necessary, the lower semiconinuity of \(f\) at \(\bar x=0\) allows us to assume without loss of generality that \(f(x')>f(\bar x)-\varepsilon=-\varepsilon\), and, consequently, \(|f(x')|<\varepsilon\).
    Invoking \eqref{eq:vc set converse1} with \(\alpha=f(x')\), \(\beta=f(x)\), and \(b=-1\), entails
	\begin{equation*}
		\ip{(v,-1)}{(x',f(x'))-(x,f(x))}\leq0,
	\end{equation*}
	which amounts to \eqref{eq:vc set converse:goal}.
	Consider now the case \(f(x')\geq\varepsilon\).
	Then
	\begin{equation*}
		f(x)+\ip{v}{x'-x}\leq\hat\varepsilon+\norm{v}(\norm{x'}+\norm{x})<\varepsilon\leq f(x'),
	\end{equation*}
    where the second inequality holds, since
    \(
    \norm{v}(\norm{x'}+\norm{x})<2\hat\varepsilon^2
    \).\qedhere
	\end{newitemize}
\end{proof}

\section{Calculus rules for variational convexity}\label{sec:calculus}
\subsection{Composition rules}
Having developed new characterizations of variational convexity, we now turn to its calculus rules, which facilitate the verification of variational convexity in practice. Several elementary calculus rules have been established in \cite{luo2024level}, including a separable sum rule \cite[Proposition 5.14]{luo2024level} and a sum rule involving a variationally convex function and a differentiable convex function \cite[Lemma 5.5]{luo2024level}.

The following lemma was used in \cite[Theorem 3.1]{poliquin2010calculus} to establish a composition rule for prox-regularity.
We use it here to derive a composition rule for variational convexity.
A proof is provided in the appendix for the sake of completeness.
% \todo{Comment on CQs}
\begin{lemma}\label{lem:chain rule cq}
	Let \(\func{g}{\R^m}{\RE}\) be proper and lower semicontinuous, let \(\func{F}{\Rn}{\R^m}\) be a \(C^1\) mapping, and let \(f(x)=(g\circ F)(x)\).
	Suppose that for \(\bar x\in F^{-1}(\dom g)\) it holds
	\begin{equation}\label{eq:chain rule cq}
		\ker\nabla F(\bar x)^*
	\cap
		\partial^\infty g(F(\bar x))
	=0.
	\end{equation}
	Then the following hold:
	\begin{enumerateq}
		\item\label{lem:chain rule cq::1}
		There exists \(\varepsilon>0\) such that for every \(x \in \Rn \) satisfying \(\norm{x-\bar x}<\varepsilon\) and \(f(x)<f(\bar x)+\varepsilon\)
		it holds that
		\(
			\ker\nabla F(x)^*
		\cap
			\partial^\infty g(F(x))
		=0
		\), and, consequently,
		\[
			\partial f(x)\subseteq \nabla F(x)^*\partial g(F(x)).
		\]
		\item\label{lem:chain rule cq::2}
		The mapping \(\ffunc{S}{\Rn\times \Rn}{\R^m}\) defined by
		\[
			S(x,v)
		=
			\set{u\in\partial g(F(x))}[\nabla F(x)^*u=v]
		\]
		is outer semicontinuous and locally bounded at \((\bar x,\bar v)\) for every \(\bar v\in\partial f(\bar x)\) with respect to \(f\)-attentive convergence.
		Consequently, \(S\) is upper semicontinuous  at \((\bar x,\bar v)\) for every \(\bar v\in\partial f(\bar x)\) with respect to \(f\)-attentive convergence at, that is, for every open set \(K\supseteq S(\bar x,\bar v)\) there exists \(\varepsilon>0\) such that
		for every \((x,v) \in \Rn \times \Rn\) satisfying \(\norm{x-\bar x}<\varepsilon\), \(f(x)<f(\bar x)+\varepsilon\), and  \(\norm{v-\bar v}<\varepsilon\), it holds that
		\[
			S(x,v)\subseteq K.
		\]
	\end{enumerateq}
\end{lemma}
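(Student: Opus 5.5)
The plan is to argue part \ref{lem:chain rule cq::1} by contradiction, using the outer semicontinuity of the horizon subdifferential along \(g\)-attentive sequences, and then obtain part \ref{lem:chain rule cq::2} by the same compactness argument together with a normalization.

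For \ref{lem:chain rule cq::1}, suppose there are \(x_k\to\bar x\) with \(f(x_k)<f(\bar x)+1/k\) and nonzero \(u_k\in\ker\nabla F(x_k)^*\cap\partial^\infty g(F(x_k))\). Since \(\partial^\infty g(F(x_k))\) is a cone, we normalize to \(\norm{u_k}=1\) and pass to a subsequence with \(u_k\to\bar u\) and \(\norm{\bar u}=1\).

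We need \(F(x_k)\to F(\bar x)\) in the \(g\)-attentive sense. Continuity of \(F\) gives \(F(x_k)\to F(\bar x)\). Lower semicontinuity of \(g\) gives \(\liminf g(F(x_k))\ge g(F(\bar x))\), and the bound \(f(x_k)<f(\bar x)+1/k\) gives the matching \(\limsup\). Hence \(g(F(x_k))\to g(F(\bar x))\).

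The horizon subdifferential is outer semicontinuous along \(g\)-attentive convergence; this follows from its definition by a diagonal argument, or from \cite[Proposition 8.7]{rockafellar_variational_1998}. Therefore \(\bar u\in\partial^\infty g(F(\bar x))\). By continuity of \(\nabla F\), we also get \(\nabla F(\bar x)^*\bar u=0\). This contradicts \eqref{eq:chain rule cq}.

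Once the qualification condition holds at such \(x\), the inclusion \(\partial f(x)\subseteq\nabla F(x)^*\partial g(F(x))\) is the standard chain rule \cite[Theorem 10.6]{rockafellar_variational_1998}. I would quote this result rather than reprove it.

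For \ref{lem:chain rule cq::2}, local boundedness is proved by the same normalization trick. Take \((x_k,v_k)\to(\bar x,\bar v)\) \(f\)-attentively and \(u_k\in S(x_k,v_k)\) with \(\norm{u_k}\to\infty\). The normalized vectors \(u_k/\norm{u_k}\) converge to some \(\bar u\) of norm one. This \(\bar u\) lies in \(\partial^\infty g(F(\bar x))\), by the definition of horizon subgradients as limits \(\lambda_k w_k\) with \(\lambda_k\downarrow0\), after a diagonal approximation of limiting subgradients by Fréchet ones. Dividing \(\nabla F(x_k)^*u_k=v_k\) by \(\norm{u_k}\) gives \(\nabla F(\bar x)^*\bar u=0\), again contradicting \eqref{eq:chain rule cq}.

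Outer semicontinuity is the same argument without normalization. Bounded \(u_k\to u\) gives \(u\in\partial g(F(\bar x))\), by outer semicontinuity of \(\partial g\) under \(g\)-attentive convergence, and \(\nabla F(\bar x)^*u=\bar v\). Hence \(u\in S(\bar x,\bar v)\).

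Upper semicontinuity then follows by contradiction. If there were points \(u_k\in S(x_k,v_k)\setminus K\) along an \(f\)-attentive sequence, they would be bounded by local boundedness. A cluster point would then lie in \(S(\bar x,\bar v)\subseteq K\) by outer semicontinuity. This is impossible, since \(K\) is open and each \(u_k\) lies in its closed complement.

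I expect the main obstacle to be the diagonal step that transfers limiting and horizon subgradients, which are defined through Fréchet subgradients at nearby points, to their limits while preserving \(g\)-attentiveness. Here I would rely on the closedness of \(\gra\partial g\) and of the graph of \(\partial^\infty g\) under attentive convergence, as recorded in \cite[Proposition 8.7]{rockafellar_variational_1998}.
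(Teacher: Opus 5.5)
Your proposal is correct and follows the paper's proof essentially step for step. Part (i) goes by contradiction with normalized horizon subgradients and outer semicontinuity of $\partial^\infty g$ under attentive convergence, then invokes \cite[Theorem 10.6]{rockafellar_variational_1998}; in part (ii), outer semicontinuity is inherited from $\partial g$ and local boundedness uses the same normalization. The only difference is that you prove upper semicontinuity directly by a compactness argument, whereas the paper cites \cite[Theorem 5.19]{rockafellar_variational_1998}.
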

We now present a nonlinear composition rule for variational convexity. As the first main result of this section, it is sufficiently general to encompass many useful special cases that will be examined later. To put this result into perspective, recall that prox-regularity is preserved under nonlinear composition provided that the inner mapping is \(C^2\); see \cite[Theorem 3.1]{poliquin2010calculus}. In contrast, for variational convexity, the inner mapping \(F\) need not to be \(C^2\). This greater flexibility is achieved at the expense of imposing a local convexity assumption.
    
\begin{theorem}[nonlinear composition]\label{thm:nonlinear comp}
	Let \(\func{g}{\R^m}{\RE}\) be proper and lower semicontinuous, let \(\func{F}{\Rn}{\R^m}\) be a \(C^1\) mapping, and let \(f(x)=(g\circ F)(x)\). Let \(\bar x\in F^{-1}(\dom g)\) and \(\bar v\in\lsubdiff f(\bar x)\) be such that \(g\) is variationally convex at \(F(\bar x)\) for every \(\bar u\in\lsubdiff g(F(\bar x))\) satisfying
    \(
    \bar v=\nabla F(\bar x)^*\bar u.
    \)
	If the constraint qualification
	\begin{equation}\label{eq:chain cq}
		\ker \nabla F(\bar x)^*\cap\partial^\infty g(F(\bar x))=\set{0}
	\end{equation}
	is satisfied and there exist open convex neighborhoods \(U\ni\bar x\) and \(V\ni\bar v\),  and \(\varepsilon>0\) such that, for every $(x,v) \in \gra\lsubdiff f\cap[U_\varepsilon \times V]$ and every $u \in S(x,v)$,
	\begin{equation}\label{eq:Gu convex}
		\ip{u}{F(\cdot)}\text{ is convex on }U,
	\end{equation}
where
    \[
		\ffunc{S}{\Rn\times\Rn}{\R^m}:(x,v)\mapsto\set{u\in\lsubdiff g(F(x))}[v=\nabla F(x)^*u],
	\]
	then \(f\) is variationally convex at \(\bar x\) for \(\bar v\in\lsubdiff f(\bar x)\).
\end{theorem}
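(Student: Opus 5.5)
We plan to verify the subgradient-inequality characterization in \cref{thm:variational cvx:4}. In other words, we will find neighborhoods $U'\ni\bar x$, $V'\ni\bar v$ and $\varepsilon'>0$ such that for every $(x,v)\in\gra\lsubdiff f\cap[U'_{\varepsilon'}\times V']$ one has $f(y)\ge f(x)+\ip{v}{y-x}$ for all $y\in U'$.

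First we localize the multipliers. By \cref{lem:chain rule cq::1}, for $x$ close to $\bar x$ with $f(x)$ close to $f(\bar x)$ we have $\lsubdiff f(x)\subseteq\nabla F(x)^*\lsubdiff g(F(x))$, so $S(x,v)\neq\emptyset$ whenever $v\in\lsubdiff f(x)$. The set $S(\bar x,\bar v)$ is compact by \cref{lem:chain rule cq::2}, and every $\bar u\in S(\bar x,\bar v)$ is a point at which $g$ is variationally convex at $F(\bar x)$. For each such $\bar u$, \cref{thm:variational cvx:4} provides a ball $W_{\bar u}$ around $F(\bar x)$, an open ball $B_{\bar u}$ around $\bar u$, and $\varepsilon_{\bar u}>0$ such that, for $(w,u)\in\gra\lsubdiff g$ with $w\in W_{\bar u}$, $g(w)<g(F(\bar x))+\varepsilon_{\bar u}$ and $u\in B_{\bar u}$,
\[
g(z)\ge g(w)+\ip{u}{z-w}\quad\text{for all } z\in W_{\bar u}.
\]
We cover $S(\bar x,\bar v)$ by finitely many half-size balls $\tfrac12 B_{\bar u_i}$, and let $K$ be their union, an open set. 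We set $W:=\bigcap_i W_{\bar u_i}$ and $\varepsilon_0:=\min_i\varepsilon_{\bar u_i}$. The upper semicontinuity in \cref{lem:chain rule cq::2} then gives $\varepsilon_1>0$ such that $S(x,v)\subseteq K$ whenever $\norm{x-\bar x}<\varepsilon_1$, $f(x)<f(\bar x)+\varepsilon_1$ and $\norm{v-\bar v}<\varepsilon_1$. Consequently every $u\in S(x,v)$ lies in some $B_{\bar u_i}$, and the inequality above holds for it on the common neighborhood $W$.

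Next we choose the neighborhoods and transfer the inequality. Take $U'\subseteq U$ an open ball around $\bar x$ small enough that $F(U')\subseteq W$, take $V'\subseteq V$ with radius below $\varepsilon_1$, and take $\varepsilon'\le\min(\varepsilon,\varepsilon_0,\varepsilon_1)$. Since $g(F(x))=f(x)$, the condition $f(x)<f(\bar x)+\varepsilon'$ gives the required $g$-attentive condition at $w=F(x)$. Now fix $(x,v)$ in the truncation and pick $u\in S(x,v)$, so that $v=\nabla F(x)^*u$. For $y\in U'$, applying the inequality with $z=F(y)$ and $w=F(x)$ gives
\[
f(y)=g(F(y))\ge g(F(x))+\ip{u}{F(y)-F(x)}.
\]
By \eqref{eq:Gu convex}, the function $\ip{u}{F(\cdot)}$ is convex and $C^1$ on the convex set $U$, with gradient $\nabla F(x)^*u=v$ at $x$. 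Hence
\[
\ip{u}{F(y)}-\ip{u}{F(x)}\ge\ip{v}{y-x},
\]
and combining the two displays yields $f(y)\ge f(x)+\ip{v}{y-x}$. \cref{thm:variational cvx:4} then concludes the proof.

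The main obstacle is the uniformity step: the neighborhoods supplied by variational convexity of $g$ depend on the multiplier $\bar u$, while $S(x,v)$ is only near $S(\bar x,\bar v)$ rather than near a single point. This is handled by combining compactness of $S(\bar x,\bar v)$ with the $f$-attentive upper semicontinuity from \cref{lem:chain rule cq::2}. Some care is also needed because the truncation for $g$ demands the strict bound $g(F(x))<g(F(\bar x))+\varepsilon_{\bar u}$, which is exactly why $\varepsilon'$ must be taken below $\varepsilon_0$.
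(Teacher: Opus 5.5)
Your proposal is correct and follows essentially the same route as the paper. Both use \cref{lem:chain rule cq} for the existence of multipliers and the $f$-attentive upper semicontinuity of $S$, make the variational-convexity neighborhoods of $g$ uniform via compactness of $S(\bar x,\bar v)$ and a finite cover, and then combine the subgradient inequality for $g$ with the gradient inequality for the convex $C^1$ function $\ip{u}{F(\cdot)}$, concluding via \cref{thm:variational cvx:4}. The only bookkeeping to add: take the radius of $U'$ below $\varepsilon_1$, and take both that radius and $\varepsilon'$ below the radius furnished by \cref{lem:chain rule cq::1}, so that $S(x,v)$ is guaranteed to be nonempty and contained in $K$.
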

\begin{proof}
	\cref{lem:chain rule cq} entails that there exists \(\varepsilon_1>0\) such that for every \(x\in\Rn\) satisfying
	\(\norm{x-\bar x}<\varepsilon_1\) and \(f(x)<f(\bar x)+\varepsilon_1\),
	\begin{equation}\label{eq:cq}
		\ker\nabla F(x)^*\cap\partial^\infty g(F(x))=\set{0},
        \text{ and thus }
        \lsubdiff f(x)\subseteq\nabla F(x)^*\lsubdiff g(F(x)),
	\end{equation}
	and that the mapping \(S\)	is upper semicontinuous at \((\bar x,\bar v)\) with respect to \(f\)-attentive convergence.
    Moreover the set \(S(\bar x,\bar v)\) is compact by the closedness of \(\lsubdiff g(F(\bar x))\) and the local boundedness of \(S\) at \((\bar x,\bar v)\) with respect to \(f\)-attentive convergence (recall \cref{lem:chain rule cq::2}).
    Note that \(g\) is variationally convex at \(F(\bar x)\) for every \(\bar u\in \lsubdiff g(F(\bar x))\) satisfying \(\bar v=\nabla F(\bar x)^*\bar u\).
    Therefore to each \(\bar u\in S(\bar x,\bar v)\) corresponds some \(\varepsilon_{\bar u}>0\) such that for \((y,u)\in\gra\lsubdiff g\) satisfying
    \begin{equation*}
        \norm{y-F(\bar x)}<\varepsilon_{\bar u}, g(y)<g(F(\bar x))+\varepsilon_{\bar u}, \text{ and } \norm{u-\bar u}<\varepsilon_{\bar u},
    \end{equation*}
    one has
    \begin{equation*}
        (\forall \norm{y'-F(\bar x)}<\varepsilon_{\bar u})\quad
        g(y')\geq g(y)+\ip{u}{y'-y}.
    \end{equation*}
    Owing to the compactness of \(S(\bar x,\bar v)\), we may take a finite cover \(\cup_{i\in I}(\bar u_i+(\varepsilon_{\bar u_i}/2)\ball)\supseteq S(\bar x,\bar v)\) for finitely many \(\bar u_i\in S(\bar x,\bar v)\) and a finite index set \(I\).
    Therefore, in particular, the inequality described above holds with the radius \(\varepsilon_{\bar u}\) being \(\varepsilon_{\bar u_i}\) for each \(i\in I\).
    Define \(\varepsilon_2:=\min_{i\in I}\varepsilon_{\bar u_i}/2>0\).
    Then for every \((y,u)\in\gra\lsubdiff g\) satisfying
	\begin{equation}\label{eq:g vc condition}
		\norm{y-F(\bar x)}<\varepsilon_2, g(y)<g(F(\bar x))+\varepsilon_2, \text{ and } \dist(u,S(\bar x,\bar v))<\varepsilon_2,
	\end{equation}
	it holds that \(\norm{y-F(\bar x)}<\varepsilon_{\bar u_i}\), \(g(y)<g(F(\bar x))+\bar\varepsilon_i\), and \(\norm{u-\bar u_i}<\varepsilon_{\bar u_i}\) for some \(i\in I\), and consequently
	\begin{equation}\label{eq:g vc}
		(\forall \norm{y'-F(\bar x)}<\varepsilon_2)\quad
		g(y')\geq g(y)+\ip{u}{y'-y}.
	\end{equation}
	Invoking the upper semicontinuity of \(S\) at \((\bar x,\bar v)\) yields that there exists \(\varepsilon_3>0\) such that, for every
    \(\norm{x-\bar x}<\varepsilon_3, f(x)<f(\bar x)+\varepsilon_3\) and \(\norm{v-\bar v}<\varepsilon_3\), it holds that
	\begin{equation}\label{eq:usc again}
	% \begin{rcases}
	% 	\norm{x-\bar x}<\varepsilon_4,\norm{v-\bar v}<\varepsilon_4&~\\
	% 	f(x)<f(\bar x)+\varepsilon_4&~\\
	% \end{rcases}
	% \Rightarrow
		S(x,v)\subseteq S(\bar x,\bar v)+\varepsilon_2\ball.
	\end{equation}
    Choose \(\tilde \varepsilon<\min(\varepsilon_1,\varepsilon_2,\varepsilon_3,\varepsilon)\) and define
	\[
	\widetilde U:=(\bar x+ \tilde \varepsilon\ball)\cap U, \widetilde U_{\tilde \varepsilon}:=\set{x\in U}[f(x)<f(\bar x)+ \tilde \varepsilon], \text{ and } \widetilde V:=(\bar v+  \tilde \varepsilon\ball)\cap V.
	\]
	Shrinking \(\tilde \varepsilon>0\) if necessary, the continuity of \(F\) allows us to assume without loss of generality that
	\begin{equation}\label{eq: F close}
		(\forall x\in \widetilde U)\quad
		\norm{F(x)-F(\bar x)}<\varepsilon_2.
	\end{equation}
    Pick \((x,v)\in\gra\lsubdiff f\cap[\widetilde U_{\tilde \varepsilon} \times \widetilde V]\).
	We aim to show that
	\begin{equation}\label{eq:vc of comp}
		(\forall x'\in \widetilde U)\quad
		f(x')\geq f(x)+\ip{v}{x'-x},
	\end{equation}
	which will furnish the variational convexity of \(f\) at \(\bar x\) for \(\bar v\) through \cref{thm:variational cvx:4}.
	To this end, we will invoke \eqref{eq:g vc} after first validating its requirements.
    Notice first that \eqref{eq:cq} means that there exists \(u\in\lsubdiff g(F(x))\) such that \(v=\nabla F(x)^*u\), while \eqref{eq:usc again} guarantees that
	\begin{equation}\label{eq:grad close}
		\dist(u,S(\bar x,\bar v))<\varepsilon_2.
	\end{equation}
	Moreover, \(g(F(x))=f(x)<f(\bar x)+\varepsilon_2=g(F(\bar x))+\varepsilon_2\), which combined with \eqref{eq: F close} and \eqref{eq:grad close} means that the pair \((F(x),u)\in\gra\lsubdiff g\) satisfies \eqref{eq:g vc condition}.
    Moreover \(\norm{F(x')-F(\bar x)}<\varepsilon_2\) for every \(x'\in \widetilde U\) by appealing to \eqref{eq: F close} again.
    Altogether, \eqref{eq:g vc} is applicable with \(y'=F(x')\) and \(y=F(x)\), furnishing
	\begin{align*}\label{eq:composition1}
    (\forall x'\in \widetilde U)\quad
		f(x')-f(x)
    &=
        g(F(x'))-g(F(x))\geq\ip{u}{F(x')-F(x)}\\
    &\geq
        \ip{\nabla F(x)^*u}{x'-x}
        =
        \ip{v}{x'-x},
	\end{align*}
    where the last inequality follows from the fact that \((x,v)\in \gra\lsubdiff f\cap[\widetilde U_\varepsilon\times \widetilde V]\) and from the convexity of the differentiable mapping \(x\mapsto\ip{u}{F(x)}\) on \(\widetilde U\subseteq U\).
\end{proof}

The constraint qualification \eqref{eq:chain cq} is standard; see, for example, the constraint qualification appearing in the subdifferential chain rule \cite[Theorem 10.6]{rockafellar_variational_1998}.

As a consequence of \cref{thm:nonlinear comp}, variationally convex functions can be characterized as compositions of convex functions and $C^1$ mappings satisfying a conic convexity condition; see \cref{cor:conic} below. This characterization is a natural analogue of the theory of strongly amenable functions, which are automatically prox-regular; see \cite[Proposition 2.5]{poliquin1996}.
Recall that the polar cone of a set \(K\subseteq\Rn\) is
\[
K^\ominus
\defeq
\set{u\in\Rn}[(\forall x\in K)~\ip{u}{x}\leq0],
\]
see \cite[Definition 6.22]{BC}.

\begin{corollary}\label{cor:conic}
	Let \(\func{g}{\R^m}{\R}\) be convex, let \(\func{F}{\Rn}{\R^m}\) be a \(C^1\) mapping, and let \(f(x)=(g\circ F)(x)\). Let \(\bar x\in F^{-1}(\dom g)\) and \(\bar v\in\lsubdiff f(\bar x)\). If there exists an open set \(K\supseteq S(\bar x,\bar v)\) such that \(F\) is \(K^{\ominus}\)-convex locally around \(\bar x\), that is, there exists an open convex neighborhood \(U\ni\bar x\) such that
\[
(\forall x_0,x_1\in U)(\forall 0<\lambda<1)\quad
    F((1-\lambda)x_0+\lambda x_1)
\in
    (1-\lambda)F(x_0)+\lambda F(x_1)+K^\ominus,
\]
    where
    \[
		\ffunc{S}{\Rn\times\Rn}{\R^m}:(x,v)\mapsto\set{u\in\lsubdiff g(F(x))}[v=\nabla F(x)^*u],
	\]
    then \(f\) is variationally convex at \(\bar x\) for \(\bar v\in\lsubdiff f(\bar x)\).
\end{corollary}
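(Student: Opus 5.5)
The plan is to deduce the corollary from \cref{thm:nonlinear comp}. We need to check three hypotheses: variational convexity of \(g\), the constraint qualification \eqref{eq:chain cq}, and the local convexity condition \eqref{eq:Gu convex}.

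The first two come from the finiteness and convexity of \(g\). Since \(g\) is finite-valued and convex on \(\R^m\), it is proper, lower semicontinuous (indeed continuous), and locally Lipschitz. Hence \(\partial^\infty g(F(\bar x))=\set{0}\), and \eqref{eq:chain cq} holds trivially. By the remark following \cref{def:vc} (item \ref{rem: subdiff cts}), \(g\) is variationally convex at \(F(\bar x)\) for every subgradient, in particular for every \(\bar u\in\lsubdiff g(F(\bar x))\) with \(\bar v=\nabla F(\bar x)^*\bar u\).

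The main work is producing \(U\), \(V\), \(\varepsilon\) for which \eqref{eq:Gu convex} holds. We use \cref{lem:chain rule cq::2}: since \(S\) is upper semicontinuous at \((\bar x,\bar v)\) with respect to \(f\)-attentive convergence and \(K\supseteq S(\bar x,\bar v)\) is open, there is \(\varepsilon>0\) such that \(S(x,v)\subseteq K\) whenever \(\norm{x-\bar x}<\varepsilon\), \(f(x)<f(\bar x)+\varepsilon\) and \(\norm{v-\bar v}<\varepsilon\). (Here \(f\) is continuous, so attentiveness is harmless.) We then shrink \(U\) to \(U\cap(\bar x+\varepsilon\ball)\), which is still an open convex neighborhood, and take \(V:=\bar v+\varepsilon\ball\). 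It remains to show that for every \(u\in K\), the function \(\ip{u}{F(\cdot)}\) is convex on \(U\).

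This last step is where the care lies. The natural duality \(u\in K\), \(w\in K^\ominus\Rightarrow\ip{u}{w}\le0\) does not hold in general, since \(K^\ominus\) is the polar of \(K\) and \(u\) ranges over \(K\) itself. Indeed, \(w\in K^\ominus\) means \(\ip{k}{w}\le 0\) for all \(k\in K\), so for \(u\in K\) we do get \(\ip{u}{w}\le0\) by the definition of the polar. Applying this with
\[
w=F((1-\lambda)x_0+\lambda x_1)-(1-\lambda)F(x_0)-\lambda F(x_1)\in K^\ominus
\]
gives
\[
\ip{u}{F((1-\lambda)x_0+\lambda x_1)}\le(1-\lambda)\ip{u}{F(x_0)}+\lambda\ip{u}{F(x_1)}
\]
for all \(x_0,x_1\in U\) and \(\lambda\in(0,1)\). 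This is exactly convexity of \(\ip{u}{F(\cdot)}\) on \(U\).

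With all hypotheses verified, \cref{thm:nonlinear comp} yields the variational convexity of \(f\) at \(\bar x\) for \(\bar v\). The only potential obstacle is the neighborhood bookkeeping in the upper-semicontinuity step, which \cref{lem:chain rule cq} already handles.
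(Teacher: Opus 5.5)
Your proposal is correct and follows the paper's proof essentially step by step. Finiteness and convexity of \(g\) give \(\partial^\infty g\equiv\{0\}\) and variational convexity of \(g\). Then \cref{lem:chain rule cq::2} gives \(S(x,v)\subseteq K\) on an \(f\)-attentive neighborhood, after shrinking \(U\) and taking \(V=\bar v+\varepsilon\ball\). The polar-cone pairing turns \(K^\ominus\)-convexity into convexity of \(\ip{u}{F(\cdot)}\) for \(u\in K\), and \cref{thm:nonlinear comp} then applies.

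One correction: delete the sentence asserting that the pairing \(u\in K\), \(w\in K^\ominus\Rightarrow\ip{u}{w}\le0\) ``does not hold in general''. It holds by the very definition of \(K^\ominus\), which is exactly how you use it in the next sentence.
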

\begin{proof}
	Convexity of \(\func{g}{\R^m}{\R}\) ensures that \(g\) is variationally convex at \(F(\bar x)\) for every \(\bar u\in\lsubdiff g(F(\bar x))\), and \(\partial^\infty g(y)=\set{0}\) for every $y \in \R^m$. Consequently, the constraint qualification \eqref{eq:chain cq} is automatically satisfied; see, e.g., \cite[Theorem 9.13]{rockafellar_variational_1998}.
    According to \cref{lem:chain rule cq::2}, there exists \(\varepsilon>0\) such that, for every \((x,v)\in\gra\lsubdiff f\) satisfying
	\(
		\norm{x-\bar x}<\varepsilon, f(x)<f(\bar x)+\varepsilon, \text{ and } \norm{v-\bar v}<\varepsilon,
	\)
	it holds that
	\begin{equation}\label{eq:usc}
		S(x,v)\subseteq K.
	\end{equation}
	We now justify that \eqref{eq:Gu convex} holds. Let \(U\defeq\bar x+\varepsilon\ball\), \(U_\varepsilon\defeq\set{x\in U}[f(x)<f(\bar x)+\varepsilon]\), and \(V\defeq\bar v+\varepsilon\ball\).  Shrinking \(\varepsilon>0\) if necessary, we may assume without loss of generality that \(F\) is \(K^\ominus\)-convex on \(U\).
    Let \((x,v)\in \gra\lsubdiff f\cap[U_\varepsilon\times V]\) and $u \in S(x,v)$. Then $u \in K$.

The \(K^\ominus\)-convexity assumption on \(F\) implies that
	\begin{align*}
		(\forall 0<\lambda<1)(\forall x_0,x_1\in U) \quad F((1-\lambda)x_0+\lambda x_1)-(1-\lambda)F(x_0)-\lambda F(x_1)
	\in
		K^\ominus,
	\end{align*}
hence,
\begin{align*}
		 (\forall 0<\lambda<1)(\forall x_0,x_1\in U) \quad \langle F((1-\lambda)x_0+\lambda x_1), u \rangle \leq (1-\lambda) \langle F(x_0), u \rangle +\lambda \langle F(x_1), u \rangle,
\end{align*}
which proves the convexity of $x \mapsto \ip{F(x)}{u}$ on $U$.    
\end{proof}

When $F$ is a linear mapping, assumption \eqref{eq:Gu convex} in \cref{thm:nonlinear comp} is automatically satisfied, yielding a linear composition rule for variational convexity.

\begin{theorem}[linear composition]\label{thm:chain rule with cq}
	Let \(\func{g}{\R^m}{\RE}\) be proper and lower semicontinuous, let \(A\in\R^{m\times n}\), and let \(f(x)=g(Ax)\).
	Let \(\bar x\in A^{-1}(\dom g)\) and let \(\bar v\in\lsubdiff f(\bar x)\) be such that \(g\) is variationally convex at \(A\bar x\) for every \(\bar u\in\partial g(A\bar x)\) satisfying \(\bar v=A^T\bar u\).
	If the constraint qualification
	\begin{equation}\label{eq:chain rule cq2}
		\ker A^T\cap\partial^\infty g(A\bar x)=\set{0}
	\end{equation}
is satisfied, then \(f\) is variationally convex at \(\bar x\) for \(\bar v\in\partial f(\bar x)\).
\end{theorem}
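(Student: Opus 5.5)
This is a direct specialization of \cref{thm:nonlinear comp} to the linear mapping \(F(x)\defeq Ax\), and the plan is simply to check that theorem's hypotheses one by one.

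First, \(F\) is \(C^1\) with \(\nabla F(x)=A\) for every \(x\in\Rn\). Hence \(\nabla F(x)^*=A^T\), and \(f=g\circ F\). The requirement that \(g\) be variationally convex at \(F(\bar x)=A\bar x\) for every \(\bar u\in\lsubdiff g(F(\bar x))\) with \(\bar v=\nabla F(\bar x)^*\bar u=A^T\bar u\) is exactly our hypothesis. The constraint qualification \eqref{eq:chain cq} reads \(\ker A^T\cap\partial^\infty g(A\bar x)=\set{0}\), which is \eqref{eq:chain rule cq2}.

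It remains to verify the local convexity condition \eqref{eq:Gu convex}. Take any open convex neighborhoods \(U\ni\bar x\) and \(V\ni\bar v\) (for instance open balls) and any \(\varepsilon>0\). For every \((x,v)\in\gra\lsubdiff f\cap[U_\varepsilon\times V]\) and every \(u\in S(x,v)\), the function \(x'\mapsto\ip{u}{Ax'}=\ip{A^Tu}{x'}\) is linear. It is therefore convex on all of \(\Rn\), and in particular on \(U\). So \eqref{eq:Gu convex} holds trivially, regardless of \(u\).

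With all hypotheses of \cref{thm:nonlinear comp} in place, it yields variational convexity of \(f\) at \(\bar x\) for \(\bar v\). There is no real obstacle; the only point needing care is confirming that \(\nabla F(x)^*=A^T\) under the paper's conventions, so that the set-valued map \(S\) and the qualification condition match the stated ones exactly.
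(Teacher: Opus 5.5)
Your proposal is correct and matches the paper's approach exactly: the paper obtains \cref{thm:chain rule with cq} by specializing \cref{thm:nonlinear comp} to $F(x)=Ax$. There $\nabla F(x)^*=A^T$, and \eqref{eq:Gu convex} holds automatically because $x\mapsto\ip{u}{Ax}$ is linear.
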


\begin{corollary}\label{thm:vc composition}
	Let \(\func{g}{\Rn}{\RE}\) be proper and lower semicontinuous, let \(A\in\R^{m\times n}\) be surjective, and let \(f(x)=g(Ax)\). Let \(\bar x\in A^{-1}(\dom g)\). If \(g\) is variationally convex at \(A\bar x\) for \(\bar u\in\partial g(A\bar x)\), then \(f\) is variationally convex at \(\bar x\) for \(A^T\bar u\in\partial f(\bar x)\).
\end{corollary}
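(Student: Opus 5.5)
This is a direct specialization of \cref{thm:chain rule with cq}, so the plan is to verify its hypotheses for \(\bar v:=A^T\bar u\).

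First, we check that \(A^T\bar u\in\partial f(\bar x)\). Since \(A\) is surjective, \(A^T\) is injective, so \(\ker A^T=\set{0}\). Hence the qualification \eqref{eq:chain rule cq2}, \(\ker A^T\cap\partial^\infty g(A\bar x)=\set{0}\), holds trivially at \(\bar x\); it in fact holds at every point. Under this condition the chain rule for surjective linear maps applies, cf.\ \cite[Exercise 10.7]{rockafellar_variational_1998}, where it holds with equality: \(\partial f(\bar x)=A^T\partial g(A\bar x)\). Therefore \(\bar v\defeq A^T\bar u\in\lsubdiff f(\bar x)\), which makes the statement meaningful.

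Second, \cref{thm:chain rule with cq} requires variational convexity of \(g\) at \(A\bar x\) for \emph{every} \(u\in\partial g(A\bar x)\) with \(A^Tu=\bar v\). The hypothesis only provides this for the single subgradient \(\bar u\). Injectivity of \(A^T\) resolves this: \(A^Tu=A^T\bar u\) forces \(u=\bar u\), so the set \(S(\bar x,\bar v)\) is the singleton \(\set{\bar u}\). The hypothesis of \cref{thm:chain rule with cq} is then exactly our assumption.

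Finally, we invoke \cref{thm:chain rule with cq} to conclude that \(f\) is variationally convex at \(\bar x\) for \(A^T\bar u\). The main point to get right is the reduction of the "for every \(\bar u\)" requirement to a single subgradient via injectivity of \(A^T\). Once that is seen, everything else is a direct specialization. If one prefers not to cite the equality form of the chain rule, membership \(A^T\bar u\in\rsubdiff f(\bar x)\subseteq\lsubdiff f(\bar x)\) can instead be checked directly. The key is that a surjective \(A\) is locally open, so \(f\) is a smooth reparametrization of \(g\) along a complement of \(\ker A\). Equivalently, after a linear change of variables \(x=(y,w)\) with \(A x=y\), \(f\) is just \(g\) in the first block of variables, constant in the second.
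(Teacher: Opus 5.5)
Your proof is correct and is the derivation the paper intends; the paper states this corollary without a written proof, as an immediate consequence of \cref{thm:chain rule with cq}. Surjectivity of \(A\) gives \(\ker A^T=\set{0}\), so \eqref{eq:chain rule cq2} holds automatically and the ``for every \(\bar u\)'' hypothesis reduces to the single subgradient \(\bar u\); membership \(A^T\bar u\in\lsubdiff f(\bar x)\) follows from the chain rule with equality for surjective linear maps, which the paper also invokes in \cref{eg:jump vc}.
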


When \(g\) is locally Lipschitz around \(A\bar x\), it holds that \(\partial^\infty g(A\bar x)=\set{0}\) (see \cite[Theorem 9.13]{rockafellar_variational_1998}) yielding the following corollary.

\begin{corollary}
    Let \(\func{g}{\R^m}{\RE}\) be proper and lower semicontinuous, let \(A\in\R^{m\times n}\), and let \(f(x)=g(Ax)\). Let \(\bar x\in A^{-1}(\dom g)\) and let \(\bar v\in\lsubdiff f(\bar x)\) be such that \(g\) is variationally convex at \(A\bar x\) for every \(\bar u\in\partial g(A\bar x)\) with \(\bar v=A^T\bar u\). If \(g\) is locally Lipschitz around \(A\bar x\), then \(f\) is variationally convex at \(\bar x\) for \(\bar v\in\partial f(\bar x)\).
\end{corollary}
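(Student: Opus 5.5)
The plan is to reduce this corollary to the linear composition rule, \cref{thm:chain rule with cq}. Its hypotheses match the present ones except for the constraint qualification \eqref{eq:chain rule cq2}. So the only work is to derive that qualification from the local Lipschitz continuity of \(g\) around \(A\bar x\).

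\textbf{Step 1.} Use local Lipschitz continuity to kill the horizon subdifferential. Since \(g\) is proper, lower semicontinuous and locally Lipschitz around \(A\bar x\), \cite[Theorem 9.13]{rockafellar_variational_1998} gives \(\partial^\infty g(A\bar x)=\set{0}\). For a self-contained argument, let \(L\) be a Lipschitz constant of \(g\) on a ball around \(A\bar x\). Every Fr\'echet subgradient at points of that ball satisfies \(\norm{v}\le L\), as one sees by testing the liminf in the definition of \(\rsubdiff g\) along the direction \(v\). Hence, for \(\lambda_k\downarrow 0\) and \(v_k\in\rsubdiff g(y_k)\) with \(y_k\to A\bar x\), we get \(\norm{\lambda_k v_k}\le\lambda_k L\to 0\). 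The only horizon subgradient is therefore \(0\).

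\textbf{Step 2.} Deduce the qualification condition. From Step 1,
\[
\ker A^T\cap\partial^\infty g(A\bar x)=\ker A^T\cap\set{0}=\set{0},
\]
which is exactly \eqref{eq:chain rule cq2}.

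\textbf{Step 3.} Conclude. The remaining hypotheses of \cref{thm:chain rule with cq} are assumed verbatim in the corollary: \(\bar x\in A^{-1}(\dom g)\), \(\bar v\in\lsubdiff f(\bar x)\), and variational convexity of \(g\) at \(A\bar x\) for every \(\bar u\in\partial g(A\bar x)\) with \(\bar v=A^T\bar u\). Applying that theorem shows that \(f\) is variationally convex at \(\bar x\) for \(\bar v\).

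There is no real obstacle here. The only point to check is that Step 1 needs Lipschitz continuity on a full neighborhood of \(A\bar x\), since \(\partial^\infty\) involves nearby points \(y_k\). This is exactly what "locally Lipschitz around \(A\bar x\)" provides.
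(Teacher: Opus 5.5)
Your proposal is correct and follows the paper's argument exactly. Local Lipschitz continuity of \(g\) around \(A\bar x\) gives \(\partial^\infty g(A\bar x)=\set{0}\) by \cite[Theorem 9.13]{rockafellar_variational_1998}, so \eqref{eq:chain rule cq2} holds trivially and \cref{thm:chain rule with cq} applies; your self-contained bound on Fr\'echet subgradients by the Lipschitz constant is a valid elaboration of that cited fact.
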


We now provide new examples of variationally convex functions by means of \cref{thm:nonlinear comp,thm:vc composition}.

\begin{example}\label{eg:zero norm linear composition}
	Let \(A\in\R^{m\times n}=[a_1,\ldots,a_m]^T\) for \(a_i\in\Rn\) and let \(\bar x\in\Rn\).
	Suppose that
	\begin{equation}\label{eq:zero norm cq}
		\text{the vectors } \set{a_i}_{i\not\in \mathcal I(A\bar x)}
		\text{ are linearly independent}.
	\end{equation}
	Then the function \(f(x)=\norm{Ax}_0\) is variationally convex at \(\bar x\) for every \(\bar v\in\lsubdiff f(\bar x)\).
\end{example}
\begin{proof}
    We begin by establishing that
    \begin{equation}\label{eq:zero norm singular}
    (\forall x\in\Rn)\quad
        \partial^\infty\norm{\cdot}_0(x)
	=
		\set{v\in\Rn}[(\forall i\in \mathcal I(x))~v_i=0],
    \end{equation}
which will be useful in verifying the constraint qualification \eqref{eq:chain rule cq2}. 
	In view of \eqref{eq:zero norm subdiff},
	\begin{equation}\label{eq:singular zero norm1}
		(\forall \lambda > 0) (\forall x \in \R^n)\quad
		\lambda\lsubdiff\norm{\cdot}_0(x)=\lsubdiff\norm{\cdot}_0(x) = \set{v\in\Rn}[(\forall i\in \mathcal I(x))~v_i=0].
	\end{equation}
    Take \(x\in\Rn\). First, we justify ``\(\subseteq\)'' in \eqref{eq:zero norm singular}. For every \(y\) sufficiently close \(x\), it holds that \( \mathcal I(x)\subseteq \mathcal I(y)\); see, e.g., \cite[Lemma 3.1(ii)\&(viii)]{bauschke2014restricted}. Hence, for every \(y\) sufficiently close \(x\),
	\begin{equation}\label{eq:singular zero norm2}
		\lsubdiff\norm{\cdot}_0(y)
	\subseteq
		\lsubdiff \norm{\cdot}_0(x).
	\end{equation}
	Pick \(v\in\partial^\infty\norm{\cdot}_0(x)\) and consider the sequences $\lambda_k\downarrow0$ and $(y_k,v_k)\in\gra\lsubdiff \norm{\cdot}_0$ such that
	\[
		y_k\to x\text{ and }\lambda_kv_k\to v
        \text{ with }\norm{y_k}_0\to \norm{x}_0
		\text{ as }k\to+\infty.
	\]
	Then, for \(k\) sufficiently large, \eqref{eq:singular zero norm1} and \eqref{eq:singular zero norm2} imply
	\[\lambda_kv_k\in\lsubdiff\norm{\cdot}_0(y_k)\subseteq\lsubdiff\norm{\cdot}_0(x)=\set{v\in\Rn}[(\forall i\in \mathcal I(x))~v_i=0],
	\]
	furnishing the requested inclusion. To see the converse inclusion ``\(\supseteq\)'', choose \(v\in\Rn\) such that \(v_i=0\), for every $i\in \mathcal I(x)$, and define
    \[
    (\forall k\in\N)\quad
        y_k
    \defeq
        x+\tfrac{1}{k}\sum_{i\in \mathcal{I}(x)}x_ie_i
    \text{ and }
        v_k
    \defeq
        kv+\sum_{i\notin \mathcal{I}(x)}e_i,
    \]
    where \(e_i\in\Rn\) denotes the \(i\)-th canonical basis.
    Then it is easy to see that \(\mathcal{I}(x)=\mathcal{I}(y_k)\), \(\norm{y_k}_0=\norm{x}_0\), and \(v_k\in\lsubdiff\norm{\cdot}_0(y_k)\).
    Since \(y_k\to x\) and \(v_k/k\to v\) as \(k\to\infty\), it follows that \(v\in\partial^\infty\norm{\cdot}_0(x)\).
    
	Turning now to proof of the variational convexity of \(f\), we note first that \(\norm{\cdot}_0\) is variationally convex at \(A\bar x\) for every \(v\in\lsubdiff\norm{\cdot}_0(A\bar x)\); see \cref{eg:zero norm vc}.
	Thus it suffices to show that the constraint qualification \eqref{eq:chain rule cq2} holds.\
	Take \(v=(v_1,\ldots,v_m)\in\partial^\infty\norm{\cdot}_0(A\bar x)\) such that $A^Tv=0$. Then \eqref{eq:zero norm singular} implies \(v_i=0\) for \(i\in \mathcal I(A\bar x)\).In turn, according to \eqref{eq:zero norm cq},
	\[
	A^Tv=\sum_{i\not\in \mathcal I(A\bar x)}a_iv_i=0
	\Rightarrow
	v=0,
	\]
	completing the proof.
\end{proof}
Next, we investigate the variational convexity of the jump function
\begin{equation*}
	x
	\mapsto
	\card\set{1\leq i\leq n-1}[x_i\neq x_{i+1}],
\end{equation*}
which counts the number of jumps in \(x\in\Rn\).
% Given its composition structure, the jump function inherits variational convexity from the zero ``norm''.
\begin{example}\label{eg:jump vc}
	The function
	\begin{equation*}
		\func{f}{\Rn}{\R}:
		x
	\mapsto
		\card\set{1\leq i\leq n-1}[x_i\neq x_{i+1}]
	\end{equation*}
	is variationally convex at every \(\bar x\in\Rn\) for every \(\bar v\in\lsubdiff f(\bar x)\). For every \(x\in\Rn\), it holds
	\begin{equation*}
		\partial^\infty f( x)
	=
		\rsubdiff f( x)
	=
		\lsubdiff f(x)
	=
		\set{D^*v}[v_i=0\text{ if }x_i\neq x_{i+1}],
	\end{equation*}
    where
    \[
	D=
	\begin{bmatrix}
		1 & -1 & 0 & \cdots & 0\\
		0 & 1 & -1 & \cdots & 0\\
		\vdots & \vdots & \vdots & \ddots & \vdots\\
		0 & 0 & 0 & \cdots & -1
	\end{bmatrix} \in\R^{(n-1)\times n}
	\]
is the matrix representation of the difference operator.
\end{example}
\begin{proof}
	Clearly \(D\) is surjective and \(f(x)=\norm{Dx}_0\).\
	Therefore \cref{thm:vc composition} and \cref{eg:zero norm vc} imply that \(f\) is variationally convex at every \(\bar x\in\Rn\) for every \(\bar v\in\lsubdiff f(\bar x)\).
	Moreover, \eqref{eq:zero norm subdiff} and \eqref{eq:zero norm singular} imply that for every \(x\in\Rn\)
	\begin{align*}
	\partial^\infty\norm{\cdot}_0(Dx)
	=
		\rsubdiff\norm{\cdot}_0(Dx)
	=
		\lsubdiff\norm{\cdot}_0(Dx)
	=
		\set{v\in\R^{n-1}}[v_i=0\text{ if }x_i\neq x_{i+1}],
	\end{align*}
	Hence the subdifferential chain rule \cite[Exercise 10.7]{rockafellar_variational_1998}, combined with surjectivity of \(D\), yields the claimed formula of subdifferentials. 
\end{proof}

\subsection{Sum rules}
In this section, we investigate sum rules for variational convexity. These rules are useful for establishing variational convexity in optimization problems involving sums of functions.
\begin{theorem}\label{thm:sum rule}
	Let \(\func{f_1,f_2}{\Rn}{\RE}\) be proper and lower semicontinuous, and let  \(f=f_1+f_2\). Let \(\bar x\in\dom f_1\cap\dom f_2\) and \(\bar v\in\lsubdiff f(\bar x)\) be such that, for every \(\bar v_i\in\partial f_i(\bar x)\) with the property that \(\bar v=\bar v_1+\bar v_2\), the function \(f_i\) is variationally convex at \(\bar x\) for \(\bar v_i\) for \(i=1,2\). If the constraint qualification
	\begin{equation}\label{eq:sum cq}
	\partial^\infty f_1(\bar x)\cap\left(-\partial^\infty f_2(\bar x)\right)=\set{0}
	\end{equation}
is satisfied, then \(f\) is variationally convex at \(\bar x\) for \(\bar v\).
\end{theorem}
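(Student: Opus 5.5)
The natural route is to reduce \cref{thm:sum rule} to the linear composition rule \cref{thm:chain rule with cq}. Define \(g\colon\Rn\times\Rn\to\RE\) by \(g(y_1,y_2)\defeq f_1(y_1)+f_2(y_2)\) and \(A\colon\Rn\to\Rn\times\Rn\), \(Ax\defeq(x,x)\). Then \(f=g\circ A\), \(A^T(u_1,u_2)=u_1+u_2\), and \(\ker A^T=\set{(u,-u)}[u\in\Rn]\). Since \(g\) is separable, \(\partial g(y_1,y_2)=\partial f_1(y_1)\times\partial f_2(y_2)\) and \(\partial^\infty g(y_1,y_2)=\partial^\infty f_1(y_1)\times\partial^\infty f_2(y_2)\), by \cite[Proposition 10.5]{rockafellar_variational_1998}. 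Here one uses that \(\bar x\in\dom f_1\cap\dom f_2\), so both functions are finite at the relevant points. Consequently, \(\ker A^T\cap\partial^\infty g(A\bar x)=\set{(u,-u)}[u\in\partial^\infty f_1(\bar x)\cap(-\partial^\infty f_2(\bar x))]\), which equals \(\set{0}\) exactly by \eqref{eq:sum cq}. This gives the constraint qualification \eqref{eq:chain rule cq2}.

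It remains to check the hypothesis of \cref{thm:chain rule with cq}: \(g\) must be variationally convex at \(A\bar x=(\bar x,\bar x)\) for every \(\bar u=(\bar v_1,\bar v_2)\in\partial g(\bar x,\bar x)\) with \(A^T\bar u=\bar v\). Such \(\bar u\) are precisely the pairs \(\bar v_i\in\partial f_i(\bar x)\) with \(\bar v_1+\bar v_2=\bar v\). By assumption, each \(f_i\) is variationally convex at \(\bar x\) for \(\bar v_i\), so it remains to prove a separable sum rule. One could cite \cite[Proposition 5.14]{luo2024level}; to be self-contained, I would argue directly with \cref{thm:variational cvx:4}.

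The argument runs as follows. Take the neighborhoods \(U_i,V_i\) and \(\varepsilon_i\) for \(f_i\). Also use lower semicontinuity of \(f_i\) at \(\bar x\) to get \(f_i>f_i(\bar x)-\eta\) on a small ball. Then \(g(y)<g(\bar x,\bar x)+\eta\) together with these lower bounds forces \(f_i(y_i)<f_i(\bar x)+2\eta\) for each \(i\). This is the step that converts the \(g\)-attentive localization into \(f_i\)-attentive localizations, and it is the only mildly delicate point. Choosing \(2\eta<\min(\varepsilon_1,\varepsilon_2)\) and product neighborhoods, any \(((y_1,y_2),(u_1,u_2))\in\gra\partial g\) in the \(g\)-attentive neighborhood satisfies \(u_i\in\partial f_i(y_i)\) within the \(f_i\)-attentive neighborhoods. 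Adding the two subgradient inequalities \eqref{eq:variational cvx:4} gives the inequality for \(g\) on \(U_1\times U_2\). Hence \(g\) is variationally convex by \cref{thm:variational cvx:4}.

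With both hypotheses verified, \cref{thm:chain rule with cq} yields variational convexity of \(f=g\circ A\) at \(\bar x\) for \(\bar v\). I expect the main obstacle to be making sure the separable formulas for \(\partial g\) and \(\partial^\infty g\) are used correctly, in particular that the horizon subdifferential of the separable sum is the product, so that the constraint qualifications match exactly. The attentive-neighborhood bookkeeping in the separable step is the other point needing care; the rest is routine.
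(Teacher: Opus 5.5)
Your proposal is correct and is essentially the paper's proof. The paper likewise writes \(f=g\circ A\) with \(g(y_1,y_2)=f_1(y_1)+f_2(y_2)\) and \(A=[\Id~\Id]^T\), notes that \eqref{eq:sum cq} yields \eqref{eq:chain rule cq2}, and applies the linear composition rule; it leaves implicit the separable-sum step (variational convexity of \(g\) at \((\bar x,\bar x)\)), which you cite from \cite[Proposition 5.14]{luo2024level} or verify directly. One small correction: \cite[Proposition 10.5]{rockafellar_variational_1998} gives in general only the inclusion \(\partial^\infty g(\bar x,\bar x)\subseteq\partial^\infty f_1(\bar x)\times\partial^\infty f_2(\bar x)\), not equality, but that inclusion is exactly the direction needed to deduce \eqref{eq:chain rule cq2} from \eqref{eq:sum cq}, so your argument is unaffected.
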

\begin{proof}
	Define \(g(y_1,y_2)=f_1(y_1)+f_2(y_2)\) for \((y_1,y_2)\in\R^{n}\times \Rn\), and
	\(A:=
		\begin{bmatrix}
		\Id~\Id
		\end{bmatrix}^T \in \R^{2n\times n}.
	\)
Then the constraint qualification \eqref{eq:sum cq} is nothing else than \eqref{eq:chain rule cq2} with \(g\) and \(A\) defined as above. The statement is a direct consequence of \cref{thm:chain rule with cq}.
\end{proof}

% Recall that a function \(\func{f}{\Rn}{\RE}\) is strictly differentiable at \(\bar x\), if
% \(
% (\forall x,x'\in\Rn)~
% f(x')=f(x)+\ip{v}{x'-x}+o(\norm{x'-x})
% \)
% for some \(v\in\Rn\); see \cite[Definition 9.17]{rockafellar_variational_1998}.
% \begin{corollary}
% 	Let \(\func{f_i}{\Rn}{\RE}\) be proper and lsc for \(i=1,2\), and let \(f_1\) be strictly differentiable at \(\bar x\in\dom f_1\cap\dom f_2\).\
% 	Suppose that \(f_i\) is variationally convex at \(\bar x\) for \(\bar v_i\in\partial f_i(\bar x)\) for each \(i=1,2\).
% 	Then \(f=f_1+f_2\) is variationally convex at \(\bar x\) for \(\bar v=\bar v_1+\bar v_2\in\partial f(\bar x)\).
% \end{corollary}
% The constraint qualification \eqref{eq:sum cq} holds automatically when \(f_1\) is locally Lipschitz around \(\bar x\), or more specifically when \(f_1\) is convex and differentiable, in which case the sum rule \cite[Lemma 5.5]{luo2024level} can be recovered.

If \(f\) is locally Lipschitz around \(\bar x\), then \(\partial^\infty f(\bar x)=\set{0}\) (see \cite[Theorem 9.13]{rockafellar_variational_1998}), in which case the constraint qualification \eqref{eq:sum cq} holds automatically.

\begin{corollary}\label{cor:vc sum}
	Let \(\func{f_1}{\Rn}{\RE}\) be proper and lower semicontinuous, let \(\bar x\in\dom f_1\), and let \(\func{f_2}{\Rn}{\R}\) be locally Lipschitz around \(\bar x\). Let \(f=f_1+f_2\) and  \(\bar v\in\lsubdiff f(\bar x)\) be such that, for every \(\bar v_i\in\partial f_i(\bar x)\) with the property that \(\bar v=\bar v_1+\bar v_2\), the function \(f_i\) is variationally convex at \(\bar x\) for \(\bar v_i\) for \(i=1,2\). Then \(f\) is variationally convex at \(\bar x\) for \(\bar v\in\lsubdiff f(\bar x)\).
\end{corollary}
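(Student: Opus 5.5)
This corollary should be a direct specialization of \cref{thm:sum rule}, so the plan is to check that the constraint qualification \eqref{eq:sum cq} holds automatically and then invoke that theorem. The first step is to confirm that \(f_2\) meets the standing requirements of \cref{thm:sum rule}. As a finite-valued function that is locally Lipschitz around \(\bar x\), \(f_2\) is proper and continuous near \(\bar x\). Lower semicontinuity is only needed locally in all the arguments (every characterization in \cref{thm:variational cvx} is local). Alternatively, I will read the standing assumption of lower semicontinuity of \(f_2\) as implicit, as the paper does. Also \(\bar x\in\dom f_1\cap\dom f_2=\dom f_1\).

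The main step is the horizon subdifferential. Since \(f_2\) is locally Lipschitz around \(\bar x\), \cite[Theorem 9.13]{rockafellar_variational_1998} gives \(\partial^\infty f_2(\bar x)=\set{0}\). This can also be checked directly. Let \(L\) be a Lipschitz constant on a neighborhood of \(\bar x\). Then every regular subgradient \(v_k\in\rsubdiff f_2(x_k)\) with \(x_k\) near \(\bar x\) satisfies \(\norm{v_k}\le L\). Hence \(\lambda_k v_k\to 0\) whenever \(\lambda_k\downarrow 0\).

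It follows that
\[
\partial^\infty f_1(\bar x)\cap\left(-\partial^\infty f_2(\bar x)\right)\subseteq \set{0}.
\]
Equality holds because \(0\) lies in every horizon subdifferential at a point of the domain, being a closed cone. So \eqref{eq:sum cq} is satisfied. The variational convexity hypotheses on \(f_1\) and \(f_2\) for every decomposition \(\bar v=\bar v_1+\bar v_2\) with \(\bar v_i\in\lsubdiff f_i(\bar x)\) are taken verbatim from the statement. Therefore \cref{thm:sum rule} applies and yields variational convexity of \(f=f_1+f_2\) at \(\bar x\) for \(\bar v\).

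The only delicate point, and the one I expect to need the most care, is the domain and lsc bookkeeping for \(f_2\). \cref{thm:sum rule} is stated for lower semicontinuous functions on all of \(\Rn\), whereas \(f_2\) is only assumed locally Lipschitz. If a global lsc version of \(f_2\) is needed, I would replace \(f_2\) by \(f_2+\delta_{\overline{B}}\), where \(\overline{B}\) is a small closed ball around \(\bar x\) on which \(f_2\) is Lipschitz. This changes neither the subdifferentials at points interior to the ball nor the local variational convexity notions. It also leaves \(\partial^\infty f_2(\bar x)=\set{0}\) intact, since \(\bar x\) is interior to \(\overline{B}\).
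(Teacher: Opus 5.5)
Your proposal is correct and matches the paper's argument. The paper also obtains this corollary by noting that local Lipschitz continuity of \(f_2\) gives \(\partial^\infty f_2(\bar x)=\set{0}\) via \cite[Theorem 9.13]{rockafellar_variational_1998}, so the constraint qualification \eqref{eq:sum cq} holds automatically and \cref{thm:sum rule} applies; your extra bookkeeping on the global lower semicontinuity of \(f_2\) (replacing it by \(f_2+\delta_{\overline{B}}\)) is a sound refinement of a point the paper leaves implicit, since variational convexity is a local property.
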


\begin{example}\label{eg:sum}
We now illustrate how the sum rule in \cref{thm:sum rule} can be used to detect variational convexity in the constrained optimization problem
	\begin{equation}\label{eq:zero ball problem}
		\min_{x \in \Rn} f(x) \quad \text{subject to} \quad \|x\|_0 \leq r,
	\end{equation}
where \(\func{f}{\Rn}{\RE}\) is a proper, convex and lower semicontinuous function, and  \(r\geq0\).

Let \(\bar x \in \dom f\) be such that \(\norm{\bar x}_0=r\) and satisfy
	\begin{equation}\label{eq:zero norm ball cq}
		N_{\dom f}(\bar x)\cap\left(-\supp(\bar x)\right)^\perp=\set{0}.
	\end{equation}
Then \(\varphi=f+\delta_C\), where $C:=\{x \in \R^n | \norm{\bar x}_0 \leq r\}$, is variationally convex at \(\bar x\) for every \(\bar v\in\lsubdiff\varphi(\bar x)\). In particular, the constraint qualification \eqref{eq:zero norm ball cq} holds automatically when \(\dom f=\Rn\). Indeed, in view of \cite[Proposition 8.12]{rockafellar_variational_1998}, the convexity of \(f\) entails \(\partial^\infty f=N_{\dom f}\).
Recall that \(\lsubdiff \delta_C(\bar x)=\partial^\infty \delta_C(\bar x)=N_C(\bar x)=\supp(\bar x)^\perp\); see \cite[Exercise 8.14]{rockafellar_variational_1998} and \cite[Theorem 3.9]{bauschke2014restricted}.\
	Therefore condition \eqref{eq:zero norm ball cq} entails that \eqref{eq:sum cq} holds.
	Moreover \cref{eg:zero norm level set} shows that \(\delta_C\) is variationally convex at \(\bar x\) for every subgradient thereof, and then so is the convex function \(f\).\
	Invoking \cref{thm:sum rule} completes the proof. 

If  \(0\in\lsubdiff\varphi(\bar x)\), then \(\bar x\) is a local minimizer of the optimization problem \eqref{eq:zero ball problem}.
\end{example}

\subsection{Proximal average}

The preceding calculus rules are primarily based on the nonlinear composition rule in \cref{thm:nonlinear comp}. We now investigate a different construction by showing that the proximal average of variationally convex functions is again variationally convex.

\begin{definition}
	Let \(\func{f,g}{\Rn}{\RE}\) be proper, lower semicontinuous, and prox-bounded functions with thresholds \(\lambda_f,\lambda_g>0\) respectively.
	The \textit{proximal average of \(f,g\) with parameter \(0<\lambda<\min(\lambda_f,\lambda_g)\) and \(0\leq\alpha\leq1\)} is defined as
	\[
		\varphi_\lambda^\alpha(f,g)
	=
		-\env[\lambda]\left(-\alpha\env f-(1-\alpha)\env g\right).
	\]
	We write \(\varphi_\lambda^\alpha\) whenever there is no ambiguity.
\end{definition}
Proximal averages were originally defined for convex functions in \cite{bauschke2008proximal}. As this work concerns nonconvex objects, we employ the more general definition of proximal averages for possibly nonconvex prox-bounded functions introduced in \cite{chen2020proximal}. Proximal averages have found applications in various areas including in machine learning and image processing; see \cite{yu2013better,yu2015minimizing,yao2022low,velasco2022learnable} and the references therein. In particular, proximal averages approximate the Moreau envelopes of nonseparable sums and have been used to solve optimization problems involving such structures; see \cite{yu2015minimizing}. These applications rely on the following properties of proper, lower semicontinuous, and prox-bounded functions \(\func{f,g}{\Rn}{\RE}\)
\begin{equation}\label{eq:prox average}
(\forall 0\leq\alpha\leq 1)\quad
    \env\varphi_\lambda^\alpha
=
	\alpha\env f+(1-\alpha)\env g
\text{ and }
    \prox \varphi_\lambda^\alpha
		=
		\alpha\prox f+(1-\alpha)\prox g,
\end{equation}
see \cite[Theorem 5.4]{chen2020proximal}. We now establish that proximal averages preserve variational convexity, which allows us to generate parameterized families of variationally convex functions.

\begin{theorem}[proximal average]\label{thm: pa vc}
	Let \(\func{f,g}{\Rn}{\RE}\) be proper, lower semicontinuous, and prox-bounded functions. If \(f,g\) are variationally convex at \(\bar x \in \dom f \cap \dom g\) for \(\bar v\in\lsubdiff f(\bar x)\cap\lsubdiff g(\bar x)\), then, for \(\lambda>0\) sufficiently small and every \(0\leq\alpha\leq1\), the proximal average \(\varphi_\lambda^\alpha\) is variationally convex at \(\bar x\) for \(\bar v\).
\end{theorem}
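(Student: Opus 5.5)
The plan is to use the Moreau envelope characterization in \cref{thm:variational cvx:3}, together with the identities \eqref{eq:prox average}. Since \(f\) and \(g\) are variationally convex at \(\bar x\) for \(\bar v\), both are prox-regular there. By \cref{thm:variational cvx:3} and \cite[Proposition 13.37]{rockafellar_variational_1998}, there is \(\bar\lambda>0\) with the following property. For every \(0<\lambda<\min(\bar\lambda,\lambda_f,\lambda_g)\) there is a common neighborhood \(W_\lambda=\bar x+\lambda\bar v+\varepsilon_\lambda\ball\) (take the intersection of the two neighborhoods) on which:
\begin{itemize}
\item \(\prox f\) and \(\prox g\) are single-valued and locally Lipschitz, with \(\prox f(\bar x+\lambda\bar v)=\prox g(\bar x+\lambda\bar v)=\bar x\);
\item \(\env f\) and \(\env g\) are \(C^{1}\) and convex.
\end{itemize}
By \eqref{eq:prox average}, \(\env\varphi_\lambda^\alpha=\alpha\env f+(1-\alpha)\env g\), which is therefore convex on \(W_\lambda\). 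Likewise \(\prox\varphi_\lambda^\alpha=\alpha\prox f+(1-\alpha)\prox g\) is single-valued on \(W_\lambda\) and sends \(\bar x+\lambda\bar v\) to \(\bar x\).

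The next step is to check the remaining hypotheses of \cref{thm:variational cvx:3} for \(\varphi\defeq\varphi_\lambda^\alpha\).
\begin{itemize}
\item \emph{Basic properties.} By \cite{chen2020proximal}, \(\varphi\) is proper, lower semicontinuous, and prox-bounded with threshold at least \(\lambda\).
\item \emph{Subgradient.} From \(\bar x=\prox\varphi(\bar x+\lambda\bar v)\) we get \(\bar v\in\rsubdiff\varphi(\bar x)\subseteq\lsubdiff\varphi(\bar x)\), since the proximal inclusion yields a proximal subgradient. Also \(\bar x\in\dom\varphi\).
\item \emph{Prox-regularity.} I would use the Poliquin--Rockafellar criterion \cite[Theorem 4.4]{poliquin1996}. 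A function that is prox-bounded, with \(\bar v\) a proximal subgradient, is prox-regular at \(\bar x\) for \(\bar v\) once its envelope \(e_{\mu}\) is \(C^1\) near \(\bar x+\mu\bar v\) for some small \(\mu\), with the prox single-valued there and \(\prox[\mu]\varphi(\bar x+\mu\bar v)=\bar x\). Working at a parameter \(\mu<\lambda\) via the same identities is possible, but the cleaner choice is \(\mu=\lambda\). At \(\mu=\lambda\) the envelope is \(C^1\) with Lipschitz gradient \(\lambda^{-1}(\Id-\prox\varphi)\), being a convex combination of two such envelopes. One also needs \(\Id-\lambda\nabla\env\varphi\) to be Lipschitz-monotone in the sense of that theorem.
\end{itemize}

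The last bullet is a distinct route, and I expect it to be the main obstacle. Specifically, \cite[Theorem 4.4]{poliquin1996} and \cite[Proposition 13.37]{rockafellar_variational_1998} characterize prox-regularity via \(\prox\) being single-valued and Lipschitz near the shifted point, but only for all sufficiently small parameters. I must make sure I can use the single fixed \(\lambda\) in the definition of \(\varphi\), since \(\varphi\) itself depends on \(\lambda\).

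If this step resists, I would instead bypass prox-regularity and verify \cref{thm:variational cvx:2} directly. Near \(\bar x\), with \(\varphi\)-attentive localization, each \((x,v)\in\gra\lsubdiff\varphi\) should satisfy \(x=\prox\varphi(x+\lambda v)\). Then monotonicity of the truncated \(\lsubdiff\varphi\) reduces to firm nonexpansiveness of \(\prox\varphi\) on \(W_\lambda\). That property follows from convexity of the \(C^{1,1}\) function \(\env\varphi\) on the convex set \(W_\lambda\): the map \(\lambda\nabla\env\varphi=\Id-\prox\varphi\) is then monotone, and it is \(1\)-Lipschitz because \(\lambda\nabla\env\varphi=\alpha(\Id-\prox f)+(1-\alpha)(\Id-\prox g)\), whose two terms are \(1\)-Lipschitz. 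By Baillon--Haddad locally, \(\Id-\prox\varphi\) is firmly nonexpansive, hence so is \(\prox\varphi\).

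The delicate point in this fallback is the localization claim \(x=\prox\varphi(x+\lambda v)\) for attentive \((x,v)\). I would obtain it from the representation of \(\varphi\) as a \(\lambda\)-proximal hull type object, plus \cref{lem:prox hull subdiff}. Following \cite{chen2020proximal}, \(\varphi+\tfrac1{2\lambda}\norm{\cdot}^2\) is a conjugate of \(\lambda\env\varphi\)-data, so \(v\in\lsubdiff\varphi(x)\) gives \(x\in\conv\prox\varphi(x+\lambda v)\). Single-valuedness of \(\prox\varphi\) on \(W_\lambda\) then gives equality, exactly as in the proof of \cref{thm:vc}.
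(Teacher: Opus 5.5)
Your main route is the paper's route. The paper gets $\bar x=\prox\varphi_\lambda^\alpha(\bar x+\lambda\bar v)$ from \eqref{eq:prox average} and $\bar v\in\lsubdiff\varphi_\lambda^\alpha(\bar x)$ from $\lambda$-hypoconvexity. It then uses convexity of $\env\varphi_\lambda^\alpha=\alpha\env f+(1-\alpha)\env g$ near $\bar x+\lambda\bar v$ and prox-regularity from \cite[Proposition 5.2(a)]{chen2020proximal}, and appeals to \cref{thm:variational cvx:3} once more.

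The worry you raise is well founded, and it is exactly the step the paper passes over. Applied to the fixed function $\varphi_\lambda^\alpha$, \cref{thm:variational cvx:3} asks for local convexity of $\env[\mu]\varphi_\lambda^\alpha$ around $\bar x+\mu\bar v$ for all small $\mu$. Only $\mu=\lambda$ has been checked. Working at $\mu<\lambda$ ``via the same identities'' does not help, because \eqref{eq:prox average} holds only at the parameter that defines the average. Prox-regularity itself is not the issue: $\varphi_\lambda^\alpha+\frac{1}{2\lambda}\norm{\cdot}^2$ is convex, so $\varphi_\lambda^\alpha$ is prox-regular and subdifferentially continuous everywhere.

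Your fallback through \cref{thm:variational cvx:2} is a genuinely different argument, and it closes the proof using only the single parameter $\lambda$. It needs two adjustments.

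\emph{Localization.} You need neither an attentive truncation nor \cref{lem:prox hull subdiff}; the latter would require $\lambda$ strictly below the prox-threshold of $\varphi_\lambda^\alpha$, which hypoconvexity does not guarantee. For every $(x,v)\in\gra\lsubdiff\varphi_\lambda^\alpha$, the convex function $w\mapsto\varphi_\lambda^\alpha(w)+\frac{1}{2\lambda}\norm{w-x-\lambda v}^2$ has $0$ in its subdifferential at $x$. Hence $x\in\prox\varphi_\lambda^\alpha(x+\lambda v)$, with equality wherever the prox is single-valued.

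\emph{The $1$-Lipschitz claim.} You assert without proof that $\Id-\prox f$ and $\Id-\prox g$ are $1$-Lipschitz, and this does not follow from prox-regularity alone. For $f=-\frac c2\norm{\cdot}^2$ with $\frac12<\lambda c<1$ it fails. It does hold here, because variational convexity makes $\prox f$ and $\prox g$ firmly nonexpansive near $\bar x+\lambda\bar v$. You can get this from \cite[Theorem 4.7]{luo2024level}. Alternatively, apply a local Baillon--Haddad argument to the globally convex function $(\lambda f+\frac12\norm{\cdot}^2)^*$: its gradient there is $\prox f$, and $\frac12\norm{\cdot}^2-(\lambda f+\frac12\norm{\cdot}^2)^*=\lambda\env f$ is convex there.

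With that in hand, your Baillon--Haddad step for $\varphi_\lambda^\alpha$ becomes redundant. The map $\prox\varphi_\lambda^\alpha=\alpha\prox f+(1-\alpha)\prox g$ is firmly nonexpansive as a convex combination. Firm nonexpansiveness at $u=x+\lambda v$, $u'=x'+\lambda v'$ gives $\lambda\ip{x-x'}{v-v'}\ge 0$ on a neighborhood of $(\bar x,\bar v)$. Together with $\bar v\in\rsubdiff\varphi_\lambda^\alpha(\bar x)$, \cref{thm:variational cvx:2} yields the claim.

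Compared with the paper, your route costs one extra ingredient, namely local firm nonexpansiveness of the prox. In exchange it avoids any statement about envelopes at parameters other than the one defining the average.
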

\begin{proof}
We first show that \(\bar v\in\lsubdiff\varphi_\lambda^\alpha(\bar x)\) for every sufficiently small \(\lambda>0\). Since \(f,g\) are variationally convex at \(\bar x\) for \(\bar v\), they are prox-regular \(\bar x\) for \(\bar v\). According to \cite[Proposition 13.37]{rockafellar_variational_1998}, there exists \(\bar\lambda_1>0\) such that, for every \(0<\lambda<\bar\lambda_1\), it holds
	\(
	\bar x=\prox f(\bar x+\lambda\bar v)=\prox g(\bar x+\lambda\bar v)
	\),
	hence
	\begin{align*}
		(\forall 0\leq\alpha\leq 1)\quad
		\bar x
	=
		\alpha\prox f(\bar x+\lambda\bar v)+(1-\alpha)\prox g(\bar x+\lambda\bar v)
	=
		\prox \varphi_\lambda^\alpha(\bar x+\lambda\bar v).
	\end{align*}
By \cite[Theorem 5.1(g)]{chen2020proximal}, \(\varphi_\lambda^\alpha\) is \(\lambda\)-hypoconvex, which means that
	\[
	\bar x=\prox \varphi_\lambda^\alpha(\bar x+\lambda\bar v)=\left(\Id+\lambda\lsubdiff\varphi_\lambda^\alpha\right)^{-1}(\bar x+\lambda\bar v)
	\Rightarrow
	\bar v\in\lsubdiff\varphi_\lambda^\alpha(\bar x).
	\]
The variational convexity of \(f,g\) at \(\bar x\) for \(\bar v\in\lsubdiff f(\bar x)\cap\lsubdiff g(\bar x)\) implies, by \cref{thm:variational cvx:3}, the existence of \(\bar \lambda_2>0\) such that, for every \(0<\lambda<\bar\lambda_2\), there exist open convex sets \(U_\lambda,V_\lambda\ni\bar x+\lambda\bar v\) on which \(\env f\) and \(\env g\) are convex, respectively. Therefore, for every \(0<\lambda<\bar\lambda\defeq\min(\bar\lambda_1,\bar\lambda_2)\),
    \(
    \env\varphi^\alpha_\lambda
    =
    \alpha\env f+(1-\alpha)\env g
    \)
    is convex on the open convex set \(W_\lambda\defeq U_\lambda\cap V_\lambda\ni\bar x+\lambda\bar v\), and \cite[Proposition 5.2(a)]{chen2020proximal} yields that \(\varphi_\lambda^\alpha\) is prox-regular at \(\bar x\) for \(\bar v\in\lsubdiff\varphi_\lambda^\alpha(\bar x)\).
   Appealing to \cref{thm:variational cvx:3} once again completes the proof.
\end{proof}

Using \cref{thm: pa vc}, we can construct parameterized families of variationally convex functions.
	\begin{example}
		Let \(f(x)=1\), for \(x\neq0\), and \(f(0)=0\), and let \(g(x)=\tfrac{x^2}{2}\). It holds
			\begin{align*}
				(\forall \lambda>0) (\forall 0\leq \alpha\leq1) \quad \varphi_\lambda^\alpha(x)
			=
				\begin{cases}
					\left(\tfrac{1+\lambda}{\alpha}-1\right)\tfrac{x^2}{2\lambda},
					&\text{ if }|x|\leq\tfrac{\alpha}{1+\lambda}\sqrt{2\lambda};\\
					-\tfrac{x^2}{2\lambda}+\sqrt{\tfrac{2}{\lambda}}|x|-\tfrac{\alpha}{1+\lambda},
					&\text{ if }\tfrac{\alpha}{1+\lambda}\sqrt{2\lambda}<|x|<\tfrac{1+(1-\alpha)\lambda}{1+\lambda}\sqrt{2\lambda};\\
					\left(\tfrac{1+\lambda}{1+(1-\alpha)\lambda}-1\right)\tfrac{x^2}{2\lambda}+1-\alpha,
					&\text{ otherwise,}
				\end{cases}
			\end{align*}
			where we adopt the convention that \(1/0=+\infty\) and \(0\cdot(+\infty)=0\) when \(\alpha=0\).
			Then,  for every sufficiently small \(\lambda\) and every \(0\leq\alpha\leq 1\), the proximal average \(\varphi_\lambda^\alpha\) is variationally convex at \(\bar x=0\) for \(\bar v=0\).
	\end{example}
	\begin{proof}
		It is easy to see that \(\env f(w)=\tfrac{w^2}{2\lambda}\), for \(|w|\leq\sqrt{2\lambda}\), and \(\env f(w)=1\), otherwise, whereas \(\env g(w)=\tfrac{w^2}{2(1+\lambda)}\).
		Hence \(h_x(w)\defeq-(1-\alpha)\env f(w)-\alpha\env g(w)+\tfrac{1}{2\lambda}(w-x)^2\) satisfies
		\begin{align*}
			h_x(w)
		=
			\begin{cases}
				p_x(w)\defeq\tfrac{\alpha}{2\lambda(1+\lambda)}w^2-\tfrac{x}{\lambda}w+\tfrac{x^2}{2\lambda},
				&\text{ if }|w|\leq\sqrt{2\lambda};\\
				q_x(w)\defeq\tfrac{1+(1-\alpha)\lambda}{2\lambda(1+\lambda)}w^2-\tfrac{x}{\lambda}w+\tfrac{x^2}{2\lambda}-(1-\alpha),
				&\text{ otherwise.}
			\end{cases}
		\end{align*}
		Define \(w_1^*\defeq\argmin_w p_x(w)=\tfrac{1+\lambda}{\alpha}x\), when \(0<\alpha\leq1\), and \(w_1^*:=\sgn(x)\sqrt{2\lambda}\), when \(\alpha=0\), and \(w_2^*\defeq\argmin_w q_x(w)=\tfrac{1+\lambda}{1+(1-\alpha)\lambda}x\), for \(0\leq\alpha\leq1\).
		Simple calculation shows that
		\[
		|w_1^*|\leq\sqrt{2\lambda}
		\Leftrightarrow
		|x|\leq\tfrac{\alpha\sqrt{2\lambda}}{1+\lambda}
		\text{ and }
		|w_2^*|\leq\sqrt{2\lambda}
		\Leftrightarrow
		|x|\leq\tfrac{1+(1-\alpha)\lambda}{1+\lambda}\sqrt{2\lambda},
		\]
		from which we derive easily that
		\begin{align*}
				\argmin_{w\in\R} h_x(w)
			=
				\begin{cases}
						w_1^*,
					&\text{ if } 
						|x|\leq\tfrac{\alpha}{1+\lambda}\sqrt{2\lambda};\\
						\sgn(x)\sqrt{2\lambda},
					&\text{ if }
						\tfrac{\alpha}{1+\lambda}\sqrt{2\lambda}<|x|<\tfrac{1+(1-\alpha)\lambda}{1+\lambda}\sqrt{2\lambda};\\
					w_2^*,
					&\text{ otherwise.}
				\end{cases}			
		\end{align*}
		The proximal average formula then follows from the fact that \(\varphi_\lambda^\alpha=-\inf_w h_x(w)\).
		Since both \(f\) and \(g\) are variationally convex at \(\bar x\) for \(\bar v\), \cref{thm: pa vc} implies that so is \(\varphi_\lambda^\alpha\) for \(\lambda>0\) sufficiently small.
	\end{proof}
% \end{comment}

\begin{remark}[calculus rules of variational strong convexity]
	While the calculus rules developed in this section are formulated in terms of variational convexity, they can be readily extended to the setting of variational strong convexity.
\end{remark}

\section{Applications to nonlinear programming}\label{sec:nonlinear}

In this section, we demonstrate how the calculus rules developed in \cref{sec:calculus} enable the identification of variational convexity in constrained nonlinear programming problems 
\begin{equation}\label{eq:nlp}
	\tag{NLP}
	\begin{aligned}
		\min_{x\in\Rn}& \ f(x)\\
		\text{s.t.~}&g_i(x)\leq0,i=1,\ldots,m,\\
		&g_i(x)=0,i=m+1,\ldots,p,
	\end{aligned}
\end{equation}
where \(\func{f}{\Rn}{\RE}\) is proper and lower semicontinuous, and \(\func{g_i}{\Rn}{\R}\) is \(C^1\) for \(i=1,\ldots,p\).
The problem \eqref{eq:nlp} can be equivalently written as
\begin{equation}\label{eq:nlp'}
	\min_{x\in\Rn}
	\varphi(x)=f(x)+\delta_{C}(G(x)),
\end{equation}
where \(G(x)=(g_1(x),\ldots,g_p(x))\) and \(C\subseteq\R^{p}\) is defined by
\[
	C=\set{(u_1,\ldots,u_p)}[u_i\leq0~\forall i=1,\ldots,m,u_i=0~\forall i=m+1,\ldots,p].
\]
Recent interest in variational convexity has largely been driven by its implications for nonlinear programming, starting with Rockafellar's seminal papers on local optimality in generalized nonlinear programming problems \cite{rockafellar2023augmented} and on the local linear convergence of augmented Lagrangian methods \cite{rockafellar2023convergence}.
Considering the general nonlinear programming problem
\begin{equation}\label{eq:gnlp}
    \min_{x\in\Rn}f(x)+h(G(x)),
\end{equation}
where \(f,G\) are twice continuously differentiable and \(\func{h}{\R^m}{\RE}\) is polyhedral convex, Rockafellar \cite[Theorem 4]{rockafellar2023augmented} showed that the variational strong convexity of the augmented objective \((x,u)\mapsto f(x)+h(G(x)+u)+\tfrac{r}{2}\norm{u}^2\), for some \(r>0\), amounts to the Hessian of the associated Lagrangian being positive definite relative to a subspace, a condition that can be viewed as the classical strong second-order sufficient condition in nonlinear programming. Subsequently, variational convexity and variational strong convexity of \(\varphi\) in the \(C^2\) setting of \eqref{eq:nlp'} have been studied by Khanh, Mordukhovich, and Phat in \cite[Theorem 7.3]{Khanh2023variational}, who derived analogous characterizations in terms of the Hessian of the associated Lagrangian; see also \cite[Theorem 9.3]{khanh2025second}. Moreover, \cite[Theorem 3.3]{wang2023strong} considered the optimization problem \eqref{eq:gnlp} with \(h=\delta_{\mathbb{S}^m_+}\), where \(\mathbb{S}^m_+\) denotes the cone of positive semidefinite \(m\times m\) matrices, and showed that variational strong convexity holds if and only if a similar Hessian condition on the Lagrangian is satisfied.

Although these results investigate variational (strong) convexity in different settings, they all rely on conditions involving the Hessian of the associated Lagrangian, thereby requiring both the objective function \(f\) and constraint functions \(g_i\) to be twice continuously differentiable. Consequently, these approaches are not applicable to problems involving nonsmooth penalties, where the objective function  \(f\) in \eqref{eq:nlp} takes the form
\[
f(x)=l(x)+\text{nonsmooth penalty},
\]
for some function \(l\). Optimization problems of this type arise frequently in practice. A prototypical example is
\(
	f(x)=\tfrac{1}{2}\norm{Ax-b}^2+r\norm{x}_0,
\)
where \(A\in\R^{m\times n}\), \(b\in\R^m\), and \(r>0\). This objective appears in \(\ell_0\)-penalized least-squares problems. From a broader perspective, classical second-order sufficient conditions in nonlinear programming guarantee local optimality, whereas variational convexity yields local optimality using only first-order information.
This naturally leads to the following question: can variational convexity be employed to derive first-order sufficient conditions for local optimality?

We address this question by investigating the variational convexity of \(\varphi\) in~\eqref{eq:nlp'} without relying on second-order properties. Our analysis is based on the calculus rules for variational convexity developed in \cref{sec:calculus}. As a consequence, the objective function \(f\) may be nonsmooth, while only continuous differentiability of the constraint functions \(g_i\), is required, thereby encompassing optimization problems with nonsmooth penalties. The trade-off is that the equality constraints in \eqref{eq:nlp} are required to be affine.

Throughout the remainder of this section, let
\[
    \mathcal{A}(\bar x):=\set{1\leq i\leq m}[g_i(\bar x)=0]
    \text{ and }
    \mathcal J(\bar x):=\mathcal{A}(\bar x)\cup\{m+1,\ldots,p\},
\]
be the set of active inequality constraints and the set of active constraints at \(\bar x\), respectively.
% Recall that
% \(
% 	\mathcal{I}(\bar x)=\set{i\in\{1,\ldots,n\}}[\bar x_i\neq0]
% \).
We say that the\emph{ Positive Linear Independence Constraint Qualification} \eqref{eq:PLICQ} for \eqref{eq:nlp} holds at \(\bar x\) if
\begin{equation}\label{eq:PLICQ}
	\tag{PLICQ}
	% \begin{cases}
	% 	\{\nabla g_i(\bar x)\}_{i=m+1}^p\text{ are linearly independent},\\
	% 	(\exists d\in\Rn)~
	% 	\nabla g_i(\bar x)^Td<0 \text{ for }i\in\mathcal{A}(\bar x)\text{ and }\nabla g_i(\bar x)^Td=0\text{ for }i=m+1,\ldots,p.
	% \end{cases}
		\left[
			\displaystyle\sum_{i\in \mathcal J(\bar x)}u_i\nabla g_i(\bar x)=0
		\text{ and }
			u_i\geq0~\forall i\in\mathcal{A}(\bar x)
		\right]
	\Rightarrow
		[u_i=0~\forall i\in \mathcal J(\bar x)].
\end{equation}
We also need the following separable sum rule for variational convexity.
\begin{lemma}\emph{\cite[Proposition 5.14]{luo2024level}}\label{lem:sep}
	Let \(\func{f_i}{\R^{n_i}}{\RE}\) be proper and lower semicontinuous functions for \(i=1,\ldots,m\).
	If \(f_i\) is variationally convex at \(\bar x_i \in \dom f_i\) for \(\bar v_i\in\lsubdiff f_i(\bar x_i)\) for \(i=1,\ldots,m\), then \(f(x)=\sum_{i=1}^mf_i(x_i)\) is variationally convex \(\bar x=(\bar x_1,\ldots,\bar x_m)\) for \(\bar v=(\bar v_1,\ldots,\bar v_m)\in\lsubdiff f(\bar x)\).
\end{lemma}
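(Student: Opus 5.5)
The most direct route is the characterization \cref{thm:variational cvx:4}, using the product structure to split the required inequality into \(m\) pieces. First I would check that \(\bar v\in\lsubdiff f(\bar x)\). This follows from the separable subdifferential formula \(\lsubdiff f(x)=\lsubdiff f_1(x_1)\times\cdots\times\lsubdiff f_m(x_m)\) (see \cite[Proposition 10.5]{rockafellar_variational_1998}), which holds without any constraint qualification. I will use this formula throughout.

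By \cref{thm:variational cvx:4}, for each \(i\) there are \(\varepsilon_i>0\) and balls \(U^i=\bar x_i+\varepsilon_i\ball\), \(V^i=\bar v_i+\varepsilon_i\ball\) with the following property: whenever \((x_i,v_i)\in\gra\lsubdiff f_i\), \(x_i\in U^i\), \(f_i(x_i)<f_i(\bar x_i)+\varepsilon_i\) and \(v_i\in V^i\), we have \(f_i(y_i)\ge f_i(x_i)+\ip{v_i}{y_i-x_i}\) for all \(y_i\in U^i\). Shrinking the neighborhoods if necessary, I would take a common \(\delta\le\min_i\varepsilon_i\) and use lower semicontinuity of each \(f_i\) to ensure \(f_i(x_i)>f_i(\bar x_i)-\delta/m\) whenever \(\norm{x_i-\bar x_i}<\delta\).

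The main obstacle is the attentive localization, because the condition \(f(x)<f(\bar x)+\eta\) controls only the sum \(\sum_i f_i(x_i)\) and not each summand. To handle this, I would set \(U:=\prod_i(\bar x_i+\delta\ball)\), \(V:=\prod_i(\bar v_i+\delta\ball)\), both open and convex, and \(\eta:=\delta/m\). Take \(x\in U\) with \(f(x)<f(\bar x)+\eta\). For each \(j\), the lower bounds on the other components give
\[
f_j(x_j)<f_j(\bar x_j)+\eta+\sum_{i\neq j}\bigl(f_i(\bar x_i)-f_i(x_i)\bigr)<f_j(\bar x_j)+\eta+(m-1)\delta/m\le f_j(\bar x_j)+\delta,
\]
so each component lies in its own \(f_j\)-attentive neighborhood.

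Now let \((x,v)\in\gra\lsubdiff f\cap[U_\eta\times V]\). By the product formula, \(v_i\in\lsubdiff f_i(x_i)\) with \(v_i\in\bar v_i+\delta\ball\). Applying the componentwise inequalities and summing over \(i\) gives \(f(y)\ge f(x)+\ip{v}{y-x}\) for every \(y\in U\). Hence \cref{thm:variational cvx:4} shows that \(f\) is variationally convex at \(\bar x\) for \(\bar v\).
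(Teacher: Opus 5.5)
The paper does not prove this lemma itself; it imports it from \cite[Proposition 5.14]{luo2024level}. Your argument is a correct, self-contained alternative. You combine the product formula $\lsubdiff f(x)=\prod_i\lsubdiff f_i(x_i)$ with the subgradient-inequality characterization \cref{thm:variational cvx:4}. You also isolate the one nontrivial point: the single attentive bound $f(x)<f(\bar x)+\eta$ has to be distributed over the components, and lower bounds from lower semicontinuity do this. Your proof also supplies the ingredient that the paper's proof of \cref{thm:sum rule} uses without comment, when it applies \cref{thm:chain rule with cq} to $g(y_1,y_2)=f_1(y_1)+f_2(y_2)$.

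There is one bookkeeping slip in the choice of constants. In general you cannot use the same $\delta$ both as the radius of $U$ and inside the slack $\delta/m$. For instance, $f_1(t)=10t$ is variationally convex at $0$ for $\bar v_1=10$. Yet $f_1(-\delta/2)=-5\delta<-\delta/m$ for every $\delta>0$, so no such common $\delta$ exists.

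The fix is to decouple the two roles:
\begin{itemize}
\item set $\delta:=\min_i\varepsilon_i$;
\item use lower semicontinuity to pick $\rho\in(0,\delta]$ such that $f_i(x_i)>f_i(\bar x_i)-\delta/m$ whenever $\norm{x_i-\bar x_i}<\rho$;
\item take $U:=\prod_i(\bar x_i+\rho\ball)$, keeping $V=\prod_i(\bar v_i+\delta\ball)$ and $\eta=\delta/m$.
\end{itemize}
Your estimate then still gives $f_j(x_j)<f_j(\bar x_j)+\delta\le f_j(\bar x_j)+\varepsilon_j$, and the rest of the argument goes through unchanged. The subtraction in that estimate is legitimate because $f(x)<f(\bar x)+\eta$ forces every $f_i(x_i)$ to be finite.
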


The nonlinear composition rule \cref{thm:nonlinear comp} allows us to derive the following result concerning the variational convexity of \(\varphi\) in \eqref{eq:nlp'} in a general setting.

\begin{theorem}\label{thm:nonlinear vc}
	Let \(\bar x \in \R^n\) be a critical point for the optimization problem \eqref{eq:nlp}, that is, let $\bar x$ satisfy \(0\in\lsubdiff\varphi(\bar x)\), where \(\varphi\) is defined as in \eqref{eq:nlp'}. If
	\begin{enumerateq}
		\item\label{thm:nonlinear vc:1}
		% \(f\) is variationally convex at \(\bar x\) for every \(u\in\lsubdiff f(\bar x)\);
		\(f\) is variationally convex at \(\bar x\) for every \(\bar v\in\lsubdiff f(\bar x)\) with the property that
		\(
            -\bar v\in\nabla G(\bar x)^*N_C(G(\bar x));
        \)
		\item\label{thm:nonlinear vc:2}
			\eqref{eq:PLICQ} holds at \(\bar x\) and
			\begin{equation}\label{eq:nonlinear vc cq}
				\partial^\infty f(\bar x)\cap\left(-\nabla G(\bar x)^*N_C(G(\bar x))\right)=\{0\};
			\end{equation}
		\item\label{thm:nonlinear vc:3}
			\((\forall i\in \mathcal{A}(\bar x))\)~\(g_i\) is locally convex around \(\bar x\) and \((\forall m+1\leq i\leq p)\) \(g_i\) is affine,
	\end{enumerateq}
then \(\varphi\) is variationally convex at \(\bar x\) for \(0\in\lsubdiff \varphi(\bar x)\).
	Consequently, \(\bar x\) is a local minimizer of \eqref{eq:nlp}.
\end{theorem}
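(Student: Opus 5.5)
The plan is to write \(\varphi\) as a sum \(f+\psi\) with \(\psi:=\delta_C\circ G\). First we show that \(\psi\) is variationally convex at \(\bar x\) for every relevant subgradient, using the nonlinear composition rule \cref{thm:nonlinear comp}. We then combine \(f\) and \(\psi\) through the sum rule \cref{thm:sum rule}. Local optimality finally follows from variational sufficiency \eqref{eq:local min}, since \(0\in\lsubdiff\varphi(\bar x)\).

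\emph{Step 1: the constraint part.} The set \(C\) is a product of half-lines \((-\infty,0]\) and singletons \(\{0\}\). Hence \(\delta_C\) is a separable sum of convex indicators, and it is convex, so it is variationally convex at \(G(\bar x)\) for every \(\bar u\in N_C(G(\bar x))\). This follows directly from convexity, or through \cref{lem:sep}. Moreover \(\partial^\infty\delta_C=N_C\), with
\[
N_C(G(\bar x))=\set{u}[u_i\ge0,\ i\in\mathcal A(\bar x);\ u_i=0,\ i\le m,\ i\notin\mathcal A(\bar x)].
\]
Therefore the qualification \eqref{eq:chain cq}, \(\ker\nabla G(\bar x)^*\cap N_C(G(\bar x))=\{0\}\), is exactly \eqref{eq:PLICQ}.

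It remains to verify the convexity hypothesis \eqref{eq:Gu convex}. Take \((x,v)\) close to \((\bar x,\bar v)\) and \(u\in S(x,v)\), so that \(u\in N_C(G(x))\). For \(x\) near \(\bar x\), inactive constraints stay inactive by continuity, so \(u_i=0\) for \(i\le m\) with \(i\notin\mathcal A(\bar x)\). For \(i\in\mathcal A(\bar x)\) we have \(u_i\ge0\), and for \(i>m\) the function \(g_i\) is affine. Hence
\[
\ip{u}{G(\cdot)}=\sum_{i\in\mathcal A(\bar x)}u_ig_i+\sum_{i>m}u_ig_i
\]
is a nonnegative combination of functions that are locally convex on a common convex neighborhood \(U\), plus an affine function, so it is convex on \(U\). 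Signs of the equality multipliers do not matter because those constraints are affine; this is exactly where the affineness of the equality constraints enters. \Cref{thm:nonlinear comp} then gives variational convexity of \(\psi\) at \(\bar x\) for every \(\bar w\in\lsubdiff\psi(\bar x)\). By the chain rule under PLICQ, \(\lsubdiff\psi(\bar x)=\nabla G(\bar x)^*N_C(G(\bar x))\).

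\emph{Step 2: the sum.} Consider any decomposition \(0=\bar v+\bar w\) with \(\bar v\in\lsubdiff f(\bar x)\) and \(\bar w\in\lsubdiff\psi(\bar x)\). Then \(-\bar v=\bar w\in\nabla G(\bar x)^*N_C(G(\bar x))\), so by \ref{thm:nonlinear vc:1} the function \(f\) is variationally convex at \(\bar x\) for \(\bar v\), and by Step 1 the function \(\psi\) is variationally convex at \(\bar x\) for \(\bar w\). For the qualification \eqref{eq:sum cq}, we need \(\partial^\infty\psi(\bar x)\subseteq\nabla G(\bar x)^*N_C(G(\bar x))\). This follows from the horizon chain rule \cite[Theorem 10.6]{rockafellar_variational_1998} under PLICQ, using \(\partial^\infty\delta_C=N_C\). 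With this inclusion, \eqref{eq:nonlinear vc cq} yields \(\partial^\infty f(\bar x)\cap(-\partial^\infty\psi(\bar x))=\{0\}\). \Cref{thm:sum rule} then gives variational convexity of \(\varphi\) at \(\bar x\) for \(0\), and \eqref{eq:local min} gives local minimality.

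\emph{Main obstacle.} The delicate step is the convexity check in Step 1. The multipliers \(u\in S(x,v)\) live at nearby points \(x\), so we must ensure that inactive indices carry zero multipliers uniformly near \(\bar x\), and that a single convex neighborhood \(U\) works for all active \(g_i\). I would handle this by shrinking \(U\) so that \(g_i<0\) on \(U\) for \(i\notin\mathcal A(\bar x)\), and so that \(U\) lies inside the intersection of the finitely many local convexity neighborhoods. Only the attentive localization in \cref{thm:nonlinear comp} is needed, so no uniform bound on \(u\) is required.
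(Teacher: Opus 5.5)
Your proof is correct, but it is organized differently from the paper's.

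\textbf{Your route.} You split $\varphi=f+\delta_C\circ G$ and apply \cref{thm:nonlinear comp} only to the constraint part. There the outer function $\delta_C$ is convex, so the only nontrivial inputs are \eqref{eq:PLICQ}, serving as the qualification \eqref{eq:chain cq}, and the convexity of $\ip{u}{G(\cdot)}$. You then bring in $f$ through \cref{thm:sum rule}. This requires two further uses of the chain rule \cite[Theorem 10.6]{rockafellar_variational_1998}:
\begin{itemize}
\item the inclusion $\lsubdiff(\delta_C\circ G)(\bar x)\subseteq\nabla G(\bar x)^*N_C(G(\bar x))$, to transfer hypothesis (i) to every admissible decomposition of $0$;
\item the horizon inclusion $\partial^\infty(\delta_C\circ G)(\bar x)\subseteq\nabla G(\bar x)^*N_C(G(\bar x))$, to turn \eqref{eq:nonlinear vc cq} into \eqref{eq:sum cq}.
\end{itemize}
You state and justify both correctly.

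\textbf{The paper's route.} The paper invokes \cref{thm:nonlinear comp} only once, for $g\circ F$ with $g(x,y)=f(x)+\delta_C(y)$ and $F(x)=(x,G(x))$.
\begin{itemize}
\item Variational convexity of $g$ at $F(\bar x)$ comes from \cref{lem:sep}.
\item The qualification \eqref{eq:chain cq} is checked via $\partial^\infty g(F(\bar x))\subseteq\partial^\infty f(\bar x)\times N_C(G(\bar x))$. Condition \eqref{eq:nonlinear vc cq} forces the $f$-component to vanish, and \eqref{eq:PLICQ} then forces the multiplier to vanish.
\item The convexity check on $\ip{u}{F(\cdot)}$ is the same as yours, because the $x$-component of $F$ is linear.
\end{itemize}

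\textbf{Comparison.} \cref{thm:sum rule} is itself \cref{thm:chain rule with cq} with $A=[\Id~\Id]^T$. So your route runs the composition machinery twice and needs the extra chain-rule inclusions, making it less economical. In exchange, it isolates a reusable fact that the paper does not state: under \eqref{eq:PLICQ} and (iii), the feasible set $G^{-1}(C)$ is variationally convex at $\bar x$ for every normal vector, independently of $f$. That fact can then be paired with other objectives via the sum rule.
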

\begin{proof}
Define $g(x,y):=f(x)+\delta_C(y)$, where the set \(C\) is given in \eqref{eq:nlp'}, and let $F(x):=(x,G(x))$. Then \(\varphi=g\circ F\). We will establish the desired variational convexity by verifying all the assumptions of \cref{thm:nonlinear comp}. Since \(\delta_C\) is convex, it is variationally convex at \(G(\bar x)\) for every subgradient in \(\partial \delta_C(G(\bar x))=N_C(G(\bar x))\). By \cref{lem:sep} and (i), it follows that \(g\) is variationally convex at \(F(\bar x)\) for every
$(\bar v,\bar u)\in\lsubdiff g(F(\bar x))
=\lsubdiff f(\bar x)\times N_C(G(\bar x))$
satisfying $0=\nabla F(\bar x)^*(\bar v,\bar u)$.

We next verify condition \eqref{eq:chain cq}. To this end, let
$(v,u)\in\partial^\infty g(F(\bar x))
\subseteq
\partial^\infty f(\bar x)\times N_C(G(\bar x))$ satisfy 
$\nabla F(\bar x)^*(v,u)=0$.
The inclusion above follows from \cite[Proposition 10.5]{rockafellar_variational_1998}. Then $v+\nabla G(\bar x)^*u=0$,
and hence condition \eqref{eq:nonlinear vc cq} implies that
$v=0$ and $\nabla G(\bar x)^*u=0$. To show that \(u=0\) as well, recall from \cite[Lemma 7.2]{Khanh2023variational} that
\begin{equation*} \ker\nabla G(\bar x)^*\cap N_C(G(\bar x))=\set{u\in\R^{p}} [\sum_{i\in \mathcal J(\bar x)}u_i\nabla g_i(\bar x)=0,u_i\geq0~\forall i\in\mathcal{A}(\bar x) \text{ and }u_i=0~\forall i\notin \mathcal J(\bar x) ]. \end{equation*}
Therefore, condition \eqref{eq:PLICQ} yields $\ker \nabla G(\bar x)^* \cap N_C(G(\bar x))
=
\{0\}$,
which implies that \(u=0\). This verifies condition \eqref{eq:chain cq}.
	
	By continuity of the functions \(g_i\) at \(\bar x\), there exists \(\varepsilon>0\) such that, for every \(x\) satisfying \(\|x-\bar x\|<\varepsilon\),
\begin{equation}\label{eq:active indice}
(\forall i\in\{1,\ldots,m\}\setminus\mathcal{A}(\bar x))
\qquad
g_i(x)
<
-\max_{j\in\{1,\ldots,m\}\setminus\mathcal{A}(\bar x)} g_j(\bar x)
\le 0,
\end{equation}
and consequently $\mathcal{A}(x)\subseteq\mathcal{A}(\bar x)$. Define
\[
T:=\left\{
(x,v)\in\operatorname{gph}\lsubdiff\varphi
\;\middle|\;
\|x-\bar x\|<\varepsilon
\right\},
\]
and suppose that \(g_i\) is convex on an open convex neighborhood \(C_i\ni\bar x\) for every \(i\in\mathcal{A}(\bar x)\). To verify the remaining condition \eqref{eq:Gu convex}, we shall establish the following stronger property:
    \begin{equation}
    (\forall u\in \cup_{(x,v)\in T}S(x,v))\quad
        \ip{u}{F(\cdot)}
    \text{ is convex on }C\defeq\cap_{i\in \mathcal{A}(\bar x)}C_i,
    \end{equation}
    where
    \(
		\ffunc{S}{\Rn\times\Rn}{\R^m}:(x,v)\mapsto\set{u\in\lsubdiff g(F(x))}[v=\nabla F(x)^*u].
	\)
Pick $(x,v)\in T$ and $u = (u_1, u_2) \in S(x,v)$, that is, \(u=(u_1,u_2)\in\lsubdiff g(F(x))
=\lsubdiff f(x)\times N_C(G(x))\) and $v=\nabla F(x)^*u$. Consequently, since \(u_2\in N_C(G(x))\), we have $
(u_2)_i\ge 0 \text{ for every } i\in\mathcal A(x)$, and $(u_2)_i=0 \text{ for every } i\notin\mathcal A(x)$.
Therefore, the function
\[
\langle u,F(\cdot)\rangle
=
\langle u_1,\cdot\rangle
+
\langle u_2,G(\cdot)\rangle
=
\underbrace{
\langle u_1,\cdot\rangle
+
\sum_{i=m+1}^{p}(u_2)_i g_i(\cdot)
}_{\text{affine}}
+
\underbrace{
\sum_{i\in\mathcal A(x)}(u_2)_i g_i(\cdot)
}_{\text{convex on } C}
\]
is convex on \(C\). Indeed, by \eqref{eq:active indice} we have
\(\mathcal A(x)\subseteq\mathcal A(\bar x)\), and \(g_i\) is convex on \(C\) for every
\(i\in\mathcal A(\bar x)\). This establishes condition \eqref{eq:Gu convex}. Hence, all the assumptions of \cref{thm:nonlinear comp} are satisfied, and therefore $\varphi$ is variationally convex at \(\bar x\) for \(0\in\lsubdiff \varphi(\bar x)\).
\end{proof}
We now turn to concrete applications.
The next result concerns a jump-penalized optimization problem with constraints.
A prototypical example is the unconstrained jump-penalized least-squares problem
\begin{equation}\label{potts least square}
\min_{x\in\mathbb{R}^n}
\frac{1}{n}\sum_{i=1}^n (x_i-b_i)^2
+
\card\bigl\{\,1\le i\le n-1 \mid x_i\neq x_{i+1}\,\bigr\},
\end{equation}
where \(b\in\mathbb{R}^n\) is a given signal. Problem \eqref{potts least square} arises in a variety of applications, including statistical mechanics \cite{potts1952some}, image reconstruction and segmentation \cite{storath2015joint}, and the approximation of regression functions \cite{boysen2009consistencies}. Building on the variational convexity of the jump function established in \cref{eg:jump vc}, we now analyze a constrained counterpart of \eqref{potts least square}.

\begin{corollary}\label{thm:jump constrained vc}
	Consider the constrained optimization problem penalized with the jump function
	\begin{equation}\label{constrained potts}
		\begin{aligned}
			\min_{x\in\Rn}&~f(x)=l(x)+\card\set{1\leq i\leq n-1}[x_i\neq x_{i+1}]\\
			\text{\rm s.t.~}&g_i(x)\leq0,i=1,\ldots,m,\\
			&g_i(x)=0,i=m+1,\ldots,p,
		\end{aligned}
	\end{equation}
	where \(\func{l}{\Rn}{\R}\) is convex, and \(\func{g_i}{\Rn}{\R}\) is \(C^1\) for every \(i=1,\ldots,p\). Let \(\bar x \in \R^n\) be a critical point  for the optimization problem \eqref{constrained potts}, that is, let $\bar x$ satisfy \(0\in\lsubdiff\varphi(\bar x)\), where \(\varphi\) is defined as in \eqref{eq:nlp'}. If
	\begin{enumerateq}
		\item 
		\eqref{eq:PLICQ} holds at \(\bar x\) and 
			\begin{equation}\label{eq: potts cq}
			\set{D^*v}
			[
					v_i=0\text{ if }\bar x_i\neq \bar x_{i+1}
			]
			\cap
			\left(-\nabla G(\bar x)^*N_C(G(\bar x))\right)
		=
			\set{0},
		\end{equation}
		where
    \[
	D=
	\begin{bmatrix}
		1 & -1 & 0 & \cdots & 0\\
		0 & 1 & -1 & \cdots & 0\\
		\vdots & \vdots & \vdots & \ddots & \vdots\\
		0 & 0 & 0 & \cdots & -1
	\end{bmatrix} \in\R^{(n-1)\times n}
	\]
is the matrix representation of the difference operator;
		\item
		\((\forall i\in\mathcal{A}(\bar x))\) \(g_i\) is  locally convex around \(\bar x\)  and \((\forall m+1\leq i\leq p)\) \(g_i\) is affine,
	\end{enumerateq}
then \(\varphi\) is variationally convex at \(\bar x\) for \(0\in\lsubdiff\varphi(\bar x)\). Consequently. \(\bar x\) is a local minimizer of the optimization problem~\eqref{constrained potts}.
\end{corollary}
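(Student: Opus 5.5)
The plan is to deduce the statement directly from \cref{thm:nonlinear vc}, applied with objective $f=l+J$, where $J(x)\defeq\card\set{1\leq i\leq n-1}[x_i\neq x_{i+1}]$. Conditions \ref{thm:nonlinear vc:2}--\ref{thm:nonlinear vc:3} of that theorem must be matched with the hypotheses of the corollary, and condition \ref{thm:nonlinear vc:1} must be established. Condition \ref{thm:nonlinear vc:3} is literally hypothesis (ii), and \eqref{eq:PLICQ} is part of hypothesis (i). It therefore remains to show two things: that $f$ is variationally convex at $\bar x$ for every $\bar v\in\lsubdiff f(\bar x)$, which is even stronger than \ref{thm:nonlinear vc:1}, and that \eqref{eq:nonlinear vc cq} follows from \eqref{eq: potts cq}.

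For variational convexity, I will apply \cref{cor:vc sum} with $f_1=J$ and $f_2=l$. Since $l$ is convex and finite-valued on $\Rn$, it is locally Lipschitz, and by \cref{rem: subdiff cts} it is variationally convex at $\bar x$ for every subgradient. \cref{eg:jump vc} gives that $J$ is variationally convex at $\bar x$ for every element of $\lsubdiff J(\bar x)$. Hence every decomposition $\bar v=\bar v_1+\bar v_2$ with $\bar v_1\in\lsubdiff J(\bar x)$ and $\bar v_2\in\lsubdiff l(\bar x)$ satisfies the hypothesis of \cref{cor:vc sum}, and so $f$ is variationally convex at $\bar x$ for every $\bar v\in\lsubdiff f(\bar x)$.

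For the constraint qualification, I need to control $\partial^\infty f(\bar x)$. Because $l$ is locally Lipschitz, adding it does not change the horizon subdifferential, so $\partial^\infty f(\bar x)=\partial^\infty J(\bar x)$ by \cite[Exercise 10.10]{rockafellar_variational_1998}; the inclusion $\subseteq$ alone would already suffice. \cref{eg:jump vc} computes $\partial^\infty J(\bar x)=\set{D^*v}[v_i=0\text{ if }\bar x_i\neq\bar x_{i+1}]$. Substituting this into \eqref{eq:nonlinear vc cq} turns it into exactly \eqref{eq: potts cq}, which holds by hypothesis (i).

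With all assumptions of \cref{thm:nonlinear vc} verified, $\varphi$ is variationally convex at $\bar x$ for $0$, and $\bar x$ is a local minimizer by \eqref{eq:local min}. The only step that is more than bookkeeping is the horizon-subdifferential sum rule with a locally Lipschitz summand. If I want to avoid citing it, I would argue directly: if $\lambda_k v_k\to w$ with $v_k\in\rsubdiff f(x_k)$, then by the fuzzy sum rule $v_k$ differs from an element of $\rsubdiff J(x_k')$ by a bounded term (a subgradient of $l$ near $\bar x$), and this difference vanishes after multiplication by $\lambda_k$.
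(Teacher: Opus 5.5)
Your proposal is correct and follows the paper's proof essentially step for step. Both reduce to \cref{thm:nonlinear vc} and obtain variational convexity of $f=l+J$ from \cref{eg:jump vc} plus a sum rule (you via \cref{cor:vc sum}, the paper via \cref{thm:sum rule} with $\partial^\infty l(\bar x)=\{0\}$); both then turn \eqref{eq:nonlinear vc cq} into \eqref{eq: potts cq} using the horizon-subdifferential sum rule and the formula for $\partial^\infty J(\bar x)$, where you use the equality from the Lipschitz sum rule and the paper only the inclusion.
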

\begin{proof}
	Denote by \(J(x)=\card\set{1\leq i\leq n-1}[x_i\neq x_{i+1}]\).
	Recall from \cref{eg:jump vc} that
	\begin{align*}
		\lsubdiff J(\bar x)
	= \partial^\infty J(\bar x) = 
		\set{D^*v}[v_i=0\text{ if }\bar x_i\neq \bar x_{i+1}].
	\end{align*}
We verify the assumptions of \cref{thm:nonlinear vc}. By \cref{eg:jump vc},  \(J\) is variationally convex at every \(\bar x\in \R^n\) for every subgradient at \(\bar x\). Since \(l:\R^n\to\R\) is convex and finite-valued, we have $\partial^\infty l(\bar x) = \{0\}$. Therefore, the variational convexity sum rule in \cref{thm:sum rule} implies that \(f\) is variationally convex at \(\bar x\) for every subgradient of \(f\) at \(\bar x\).
    
Moreover, the singular subdifferential sum rule yields	\[
	\partial^\infty f(\bar x)
	\subseteq
		\partial^\infty l(\bar x)
		+
		\partial^\infty J(\bar x) = \set{D^*v}[v_i=0\text{ if }\bar x_i\neq \bar x_{i+1}].
	\]
Consequently, condition \eqref{eq:nonlinear vc cq} follows from \eqref{eq: potts cq}. Hence all assumptions of \cref{thm:nonlinear vc} are satisfied, and the asserted conclusion follows.
\end{proof}

% \begin{example}
% 	Let \(l(x_1,x_2)=\tfrac{1}{2}(x_1^2+x_2^2)\) and let \(h(x)=0\) for \(x\leq0\) and \(h(x)=1\) otherwise.
% 	Consider
% 	\[
% 	\begin{aligned}
% 		\min_{(x_1,x_2)\in\R^2}~
% 	&f(x_1,x_2)=l(x_1,x_2)+h(x_1)+h(x_2)\\
% 	\text{s.t. }
% 	&g_1(x_1,x_2)=x_1+x_2-1\leq0\\
% 	&g_2(x_1,x_2)=x_1-1=0.
% 	\end{aligned}
% 	\]
% 	Let \(\bar x=(1,0)\).
% 	Then
% 	\(
% 	\mathcal{A}(\bar x)=\set{1}
% 	\),
% 	\(\mathcal{J}(\bar x)=\set{1,2}\),
% 	\(
% 	\mathcal{I}(\bar x)=\set{1}
% 	\),
% 	and
% 	\(
% 	\nabla l(\bar x)+u_1\nabla g_1(\bar x)+u_2\nabla g_2(\bar x)
% 	=
% 	(1+u_1+u_2,u_1)
% 	\) for every \(u_1,u_2\in\R\).
% 	Obviously \(\bar x\) is a stationary point.
% 	We next show it's a local minimizer by verifying \eqref{eq:easy jump cq}.\
% 	Pick \(u_1,u_2\in\R\) such that
% 	\begin{equation*}
% 		v=(u_1+u_2,u_1)\in\R^2_-,
% 		v_1=0,
% 		u_1\geq0.
% 	\end{equation*}
% 	Then \(u_1=0\) and consequently it must hold that \(u_2=0\), justifying \eqref{eq:easy jump cq}.\
% 	Hence all assumptions of \cref{thm:jump constrained problem vc} hold, and therefore \(\varphi=f+\delta_\Omega\) variationally convex at \(\bar x\) for \(0\in\lsubdiff\varphi(\bar x)\), where \(\Omega=\set{x\in\R^2}[x_1+x_2-1\leq0,x_1=1]\), and \(\bar x\) is a local minimizer of \(\varphi\).
% \end{example}

We now turn to a class of constrained optimization problems involving penalties based on the Heaviside function, defined by
\[
\func{h}{\R}{\R}:
x\mapsto
\begin{cases}
0,& x\le 0,\\
1,& x>0.
\end{cases}
\]
The Heaviside function arises in numerous applications, most notably in image segmentation; see, for example, \cite[Section~2]{chan2001active}. We begin by showing that it is variationally convex.

\begin{example}\label{eg:heavy vc}
Let
\[
h(x)=
\begin{cases}
0,& x\le 0,\\
1,& x>0.
\end{cases}
\]
Then \(h\) is variationally convex at every point and with respect to every subgradient.
\end{example}

\begin{proof}
Since \(h\) is locally constant away from the origin, it suffices to verify variational convexity at \(\bar x=0\) for arbitrary \(\bar v\in\lsubdiff h(0)\). Observe that
\[
\rsubdiff h(x)=\lsubdiff h(x)=
\begin{cases}
\{0\},& x\neq 0,\\
\R_+,& x=0.
\end{cases}
\]
We first consider the case \(\bar v=0\). Let \(0<\varepsilon<1\), and define
$U=V=(-\varepsilon,\varepsilon)$. Since
$U_\varepsilon
=
\{x\in U | h(x)<h(\bar x)+\varepsilon\}
=
(-\varepsilon,0]$,
we obtain $(U_\varepsilon\times V)\cap \gra \lsubdiff h
=
(-\varepsilon,0]\times\{0\}$,
which is clearly a monotone set. Hence \(h\) is variationally convex at \(\bar x\) with respect to \(\bar v=0\) by \cref{thm:variational cvx:2}.

Next, let \(\bar v>0\). Choose \(\varepsilon>0\) sufficiently small so that
$ 0\notin (\bar v-\varepsilon,\bar v+\varepsilon)$,
and set $V=(\bar v-\varepsilon,\bar v+\varepsilon)$, and
$U=(-\varepsilon,\varepsilon)$.
Then $(U\times V)\cap \gra \lsubdiff h
=
\{0\}\times (\bar v-\varepsilon,\bar v+\varepsilon)$,
which is also a monotone set. Another application of \cref{thm:variational cvx:2} therefore shows that \(h\) is variationally convex at \(\bar x\) with respect to \(\bar v\).

Since \(\bar v\in\lsubdiff h(0)\) was arbitrary, the claim follows.
\end{proof}

\begin{corollary}\label{thm:jump constrained problem vc}
Consider the constrained optimization problem penalized with the Heaviside function
\begin{equation}\label{jump regularized problem}
		\begin{aligned}
			\min_{x\in\Rn}&f(x)=l(x)+r\sum_{i=1}^nh(x_i)\\
			\text{\rm s.t.~}&g_i(x)\leq0,i=1,\ldots,m,\\
			&g_i(x)=0,i=m+1,\ldots,p,
		\end{aligned}
	\end{equation}
	where \(\func{l}{\Rn}{\R}\) is convex, \(\func{g_i}{\Rn}{\R}\) is \(C^1\) for every \(i=1,\ldots,p\), 
\[
\func{h}{\R}{\R}:
x\mapsto
\begin{cases}
0,& x\le 0,\\
1,& x>0.
\end{cases}
\]
is the Heaviside function, and $r >0$. Let \(\bar x \in \R^n\) be a critical point  for the optimization problem \eqref{jump regularized problem}, that is, let $\bar x$ satisfy \(0\in\lsubdiff\varphi(\bar x)\), where \(\varphi\) is defined as in \eqref{eq:nlp'}. If
   \begin{enumerateq}
		\item\label{thm:jump constrained problem vc:cq}
		\eqref{eq:PLICQ} holds at \(\bar x\) and
			\begin{equation}\label{eq: jump cq}
			\set{v\in\R^n_+}[v_j=0~\forall  j\in\mathcal I(\bar x)]\cap\left(-\nabla G(\bar x)^*N_C(G(\bar x))\right)
		=
			\set{0};
		\end{equation}
		\item
		\((\forall i\in\mathcal{A}(\bar x))\) \(g_i\) is locally convex around \(\bar x\)  and \((\forall m+1\leq i\leq p)\) \(g_i\) is affine,
	\end{enumerateq}
then \(\varphi\) is variationally convex at \(\bar x\) for \(0\in\lsubdiff\varphi(\bar x)\). Consequently. \(\bar x\) is a local minimizer of the optimization problem \eqref{jump regularized problem}.
\end{corollary}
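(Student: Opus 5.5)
The plan mirrors the proof of \cref{thm:jump constrained vc}: we verify the hypotheses of \cref{thm:nonlinear vc} for \(f=l+r\sum_{i=1}^n h(x_i)\). Condition (iii) of \cref{thm:nonlinear vc} is exactly assumption (ii) of the corollary, so two things remain. We must show that \(f\) is variationally convex at \(\bar x\) for every subgradient, and that the constraint qualification \eqref{eq:nonlinear vc cq} follows from \eqref{eq: jump cq}.

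For variational convexity, set \(H(x):=\sum_{i=1}^n h(x_i)\). By \cref{eg:heavy vc}, \(h\) is variationally convex at every point for every subgradient. Then \cref{lem:sep} shows that \(H\) is variationally convex at \(\bar x\) for every \(\bar w\in\lsubdiff H(\bar x)\), using the separable formula \(\lsubdiff H(\bar x)=\prod_i\lsubdiff h(\bar x_i)\) (valid for separable sums, \cite[Proposition 10.5]{rockafellar_variational_1998}). Multiplying by \(r>0\) clearly preserves variational convexity, for example through \cref{thm:variational cvx:4}. Since \(l\) is convex and finite-valued, it is locally Lipschitz, variationally convex at \(\bar x\) for every subgradient, and satisfies \(\partial^\infty l(\bar x)=\{0\}\). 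Hence \cref{cor:vc sum}, with \(f_1=rH\) and \(f_2=l\), gives variational convexity of \(f\) at \(\bar x\) for every \(\bar v\in\lsubdiff f(\bar x)\). In particular, assumption (i) of \cref{thm:nonlinear vc} holds.

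For the constraint qualification, we need \(\partial^\infty f(\bar x)\). By the horizon sum rule, valid since \(l\) is locally Lipschitz, \(\partial^\infty f(\bar x)\subseteq\partial^\infty(rH)(\bar x)=\prod_i\partial^\infty h(\bar x_i)\), again by \cite[Proposition 10.5]{rockafellar_variational_1998}. The main computation is \(\partial^\infty h\). For \(x\neq0\), \(h\) is locally constant, so \(\partial^\infty h(x)=\{0\}\). At \(x=0\) we have \(\partial^\infty h(0)=\R_+\). Indeed, attentive convergence \(x_k\xrightarrow{h}0\) forces \(x_k\leq0\) eventually, so the regular subgradients involved lie in \(\R_+\) and their scaled limits lie in \(\R_+\). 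Conversely, taking \(x_k=0\) and \(v_k=k v\) with \(\lambda_k=1/k\) gives every \(v\geq0\). Therefore \(\partial^\infty f(\bar x)\subseteq\set{v\in\R^n_+}[v_j=0~\forall j\in\mathcal I(\bar x)]\), and \eqref{eq: jump cq} implies \eqref{eq:nonlinear vc cq}. Together with \eqref{eq:PLICQ}, assumption (ii) of \cref{thm:nonlinear vc} holds.

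Applying \cref{thm:nonlinear vc} then yields variational convexity of \(\varphi\) at \(\bar x\) for \(0\), and local minimality follows from \eqref{eq:local min}. The only delicate step is the horizon subdifferential computation at \(0\), specifically the use of \(h\)-attentive convergence to exclude points \(x_k>0\). Everything else is a direct assembly of the earlier results.
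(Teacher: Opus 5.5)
Your proposal is correct and follows essentially the same route as the paper. You obtain variational convexity of $f$ from \cref{eg:heavy vc}, \cref{lem:sep} and the sum rule (using \cref{cor:vc sum} where the paper cites \cref{thm:sum rule} with $\partial^\infty l(\bar x)=\{0\}$), then use the horizon sum rule with $\partial^\infty h(0)=\R_+$ to turn \eqref{eq: jump cq} into \eqref{eq:nonlinear vc cq}, and conclude via \cref{thm:nonlinear vc}; your handling of the factor $r$ and the attentive computation of $\partial^\infty h(0)$ just fill in details the paper calls straightforward, and for the separable horizon subdifferential only the inclusion $\partial^\infty(rH)(\bar x)\subseteq\prod_i\partial^\infty h(\bar x_i)$ is needed, which is all the cited proposition gives in general.
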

\begin{proof}
Write $H(x):=r\sum_{i=1}^n h(x_i)$
for brevity. It is straightforward to verify that
\[
\partial^\infty H(\bar x)
=
\rsubdiff H(\bar x)
=
\lsubdiff H(\bar x)
=
\{v\in\R_+^n | v_i=0 \ \forall i\in\mathcal I(\bar x)\}.
\]
We next show that \(f\) is variationally convex at \(\bar x\) with respect to every subgradient of \(f\) at \(\bar x\).  Since \(l:\R^n\to\R\) is convex and finite-valued, we have $\partial^\infty l(\bar x) = \{0\}$.  Hence, by the singular subdifferential sum rule,
\[
\partial^\infty f(\bar x)
\subseteq
\partial^\infty l(\bar x)+\partial^\infty H(\bar x)
=
\{v\in\R_+^n | v_i=0 \ \forall i\in\mathcal I(\bar x)\}.
\]
Consequently, condition \eqref{eq: jump cq} implies that $\partial^\infty f(\bar x)
\cap
\bigl(-\nabla G(\bar x)^*N_C(G(\bar x))\bigr)
=
\{0\}$.

Furthermore, \cref{eg:heavy vc} and the separable sum rule for variational convexity (see \cref{lem:sep}) imply that \(H\) is variationally convex at every point with respect to every subgradient. Applying the variational convexity sum rule \cref{thm:sum rule}, we conclude that \(f=l+H\) is variationally convex at \(\bar x\) with respect to every subgradient of \(f\) at \(\bar x\).

Therefore, all assumptions of \cref{thm:nonlinear vc} are satisfied, and the desired conclusion follows.
\end{proof}
\begin{remark}
Assume that the following constraint qualification holds at \(\bar x\):
\begin{equation}\label{eq:easy jump cq}
			\left[
				v\defeq\sum_{i\in \mathcal J(\bar x)}u_i\nabla g_i(\bar x)\in\R^n_-, \
				v_j=0~\forall j\in \mathcal I(\bar x)
				\text{ and }
				u_i\geq0~\forall i\in\mathcal{A}(\bar x)
			\right]
			\Rightarrow
				\left[u_i=0~\forall i\in \mathcal J(\bar x)\right]
	\end{equation}
Then both assumptions of \cref{thm:jump constrained problem vc:cq} are satisfied.

Clearly, \eqref{eq:easy jump cq} implies \eqref{eq:PLICQ}. Now let \(\tilde v\) belong to the intersection appearing in \eqref{eq: jump cq}. To verify \eqref{eq: jump cq}, it suffices to show that \(\tilde v = 0\). Define $v \defeq -\tilde v
    = \sum_{i\in\mathcal J(\bar x)} u_i \nabla g_i(\bar x)$
for some \(u \in N_C(G(\bar x))\). Since \(\tilde v \in \mathbb{R}^n_+\) and
\(\tilde v_j = 0\) for every \(j \in \mathcal I(\bar x)\), it follows that
\(v \in \mathbb{R}^n_-\) and \(v_j = 0\) for every \(j \in \mathcal I(\bar x)\). Moreover, \(u \in N_C(G(\bar x))\) implies that
\(u_i \ge 0\) for every \(i \in \mathcal A(\bar x)\).
Therefore, all assumptions of \eqref{eq:easy jump cq} are satisfied, and hence
\(u_i = 0\) for all \(i \in \mathcal J(\bar x)\).
Consequently, \(v = 0\), and thus \(\tilde v = 0\).
\end{remark}

\section{Conclusions}
This paper investigates characterizations and calculus rules for variational convexity, together with their applications to nonlinear programming. We characterize the variational convexity of a function \(\func{f}{\mathbb{R}^n}{\overline{\mathbb{R}}}\) in terms of its proximal hulls and epigraph, thereby providing deeper insight into this notion. We further establish calculus rules showing that variational convexity is preserved under fundamental operations in optimization, including nonlinear and linear compositions, summation, and proximal averaging. These rules facilitate the identification of variationally convex functions without the need to verify their characterizations directly.

In addition to yielding new examples of variationally convex functions, our results lead to new local optimality conditions for nonlinear programming problems. Compared with existing results in the literature, our approach based on calculus rules allows for nonsmooth objective functions while requiring only \(C^1\)-smoothness of the constraints.

\section*{Acknowledgment}
The research of ZW was supported by a Postdoctoral Fellowship from the Natural Sciences and Engineering Research Council of Canada.
The authors thank Xianfu Wang for many insightful discussions.

\bibliographystyle{siamplain}
\bibliography{biblio}

\appendix
\section{Proof of \cref{lem:chain rule cq}}
\begin{newitemize}
	\item``\ref{lem:chain rule cq::1}''
	In view of the subdifferential chain rule \cite[Theorem 10.6]{rockafellar_variational_1998}, it suffices to show that there exists \(\varepsilon>0\) such that, for every \(x \in \Rn\) satisfying \(\norm{x-\bar x}<\varepsilon\) and \(f(x)<f(\bar x)+\varepsilon\), it holds
	\begin{equation*}
		\ker\nabla F(x)^*\cap\partial^\infty g(F(x))=\set{0}.
	\end{equation*}
	Suppose, to the contrary, that there exist sequences \(\seq{x_k}\) and \(\seq{u_k}\) such that \(x_k\to \bar x\), \(f(x_k)\leq f(\bar x)+1/k\), and \(0\neq u_k\in\partial^\infty g(F(x_k))\) with \(\nabla F(x_k)^*u_k=0\) for every \(k\in\N\). Since \(f\) is lower semicontinuous at \(\bar x\), it follows that \(g(F(x_k))=f(x_k)\to f(\bar x)=g(F(\bar x))\) as $k \to +\infty$.
	Then
	\[
	(\forall k\in\N)\quad
	\tfrac{u_k}{\norm{u_k}}\in\partial^\infty g(F(x_k))\text{ and }\nabla F(x_k)^*\tfrac{u_k}{\norm{u_k}}=0.
	\]
	Taking subsequences if necessary, we may assume that \(\tfrac{u_k}{\norm{u_k}}\to u\neq0\) as  $k \to +\infty$. Then, the outer semicontinuity of \(\partial^\infty g\) with respect to attentive convergence yields \(u\in\partial^\infty g(F(\bar x))\) and \(\nabla F(\bar x)^*u=0\). This contradicts \eqref{eq:chain rule cq}.
	\item``\ref{lem:chain rule cq::2}''
	The outer semicontinuity of \(S\) follows from that of \(\partial g\).
	Suppose that \(S\) is not locally bounded at \((\bar x,\bar v)\) with respect to \(f\)-attentive convergence, say there exist sequences \((x_k,v_k)\to (\bar x,\bar v)\) with \(f(x_k)\to f(\bar x)\) and \(u_k\in S(x_k,v_k)\) such that \(\norm{u_k}\to+\infty\) as $k \to +\infty$.
	Then, by taking subsequences if necessary, we may assume that 
	\(
	\tfrac{u_k}{\norm{u_k}}\to u
	\) as $k \to +\infty$, implying
	\(0\neq u\in\partial^\infty g(F(\bar x))\).
	But
	\[
	\nabla F(x_k)^*\frac{u_k}{\norm{u_k}}=\tfrac{v_k}{\norm{u_k}}\to 0
	\text{ as }k\to+\infty, \ \text{hence,} \
	\nabla F(\bar x)^*u=0.
	\]
This violates \eqref{eq:chain rule cq}. It follows from \cite[Theorem 5.19]{rockafellar_variational_1998} that \(S\) is usc at \((\bar x,\bar v)\) with respect to \(f\)-attentive convergence.
	\qed
\end{newitemize}
\end{document}